\numberwithin{equation}{section}
\newtheorem{theorem}{Theorem}[section]
\newtheorem{lemma}{Lemma}[section]
\newtheorem{proposition}{Proposition}[section]
\newtheorem{remark}{Remark}[section]
\title[Elliptic problems on Riemannian manifolds]
 {Some elliptic problems with singular nonlinearity and advection for Riemannian manifolds}
\author[J.M.\ do \'O]{Jo\~ao Marcos do \'O}
\author[R.G.\ Clemente]{Rodrigo G.\ Clemente}
\address[J.M. do \'O]{Department of Mathematics,
Federal University of Para\'{\i}ba
\newline\indent
58051-900, Jo\~ao Pessoa-PB, Brazil}
\email{\href{mailto:jmbo@pq.cnpq.br}{jmbo@pq.cnpq.br}}
\address[R.\ Clemente]{Department of Mathematics, 
Rural Federal University of Pernambuco
\newline\indent 
52171-900, Recife, Pernambuco, Brazil}
\email{\href{mailto:rodrigo.clemente@ufrpe.br}{rodrigo.clemente@ufrpe.br}}
\thanks{Research partially supported by the National Institute of Science and Technology of Mathematics INCT-Mat, CAPES and CNPq.}
\subjclass[2000]{35J60, 35B65, 35B45}
\keywords{Nonlinear PDE of elliptic type, Singular nonlinearity, Advection, semi-stable solution, Extremal solution, Regularity.}
\begin{document}

\begin{abstract}
We are interested in regularity properties of semi-stable solutions for a class of singular semilinear elliptic problems with advection term defined on a smooth bounded domain  of a complete Riemannian manifold with zero Dirichlet boundary condition. We prove uniform Lebesgue estimates and we determine the critical dimensions for these problems with nonlinearities of the type Gelfand, MEMS and power case. As an application, we show that extremal solutions are classical whenever the dimension of the manifold is below the critical dimension of the associated problem. Moreover, we analyze the branch of minimal solutions and we prove multiplicity results when the parameter is close to critical threshold and we obtain uniqueness on it. Furthermore, for the case of Riemannian models we study properties of radial symmetry and monotonicity for semi-stable solutions.
\\
\medskip
\noindent Mathematics Subject Classifications: Primary 35J60, Secondary 35B65, 35B45\\
\medskip
\noindent Keywords: Nonlinear PDE of elliptic type, Singular nonlinearity, Advection, Semi-stable and extremal solutions.
\end{abstract}

\maketitle

\section{Introduction}
Let $(\mathcal M, g)$ be a complete Riemannian manifold with dimension $N$, $\Omega\subset\mathcal{M}$ a smooth bounded domain and $A(x)$ a smooth vector field over $\overline{\Omega}$. In the present paper, we investigate the following  class of nonlinear elliptic differential equations involving singular nonlinearities and advection

\begin{equation}\label{01}
\left\{
\begin{alignedat}{3}
-\Delta_g u + A(x)\cdot\nabla_g u= & \, \lambda f(u) & \quad \text{in} & \quad\Omega, \\
u > &\, 0 & \quad \text{in} & \quad \Omega,\\
u = & \, 0  &  \text{on} & \quad \partial\Omega,\\
\end{alignedat}
\right.\tag{$P_{\lambda}$}
\end{equation}
We analyse \eqref{01} for the following types of  nonlinearities:

\begin{flalign}\label{60}
\begin{array}{lccclll}
(i)   &  & f(s) & = & e^s                 &  & \hspace{2cm}\left(\text{Gelfand}\right)    \vspace{0.15cm}\\ 
(ii)  &  & f(s) & = & (1+s)^m\text{, }m>1 &  & \hspace{2cm}\left(\text{Power-type}\right) \vspace{0.15cm}\\
(iii) &  & f(s) & = & 1/(1-s)^2           &  & \hspace{2cm}\left(\text{MEMS}\right)       \vspace{0.15cm}
\end{array}&&
\end{flalign}

The main purpose of this paper is to study the minimal branch and regularity properties for minimal solutions of \eqref{01}. We first prove that there exists some positive finite critical paramater $\lambda^*$ such that for all $0<\lambda < \lambda^*$ the problem \eqref{01} has a smooth minimal stable solution $\underline{u}_{\lambda}$ while for $\lambda > \lambda^*$ there are no solutions of \eqref{01} in any sense (cf.  Theorems \ref{22}). We determine the critical dimension $N^*$ for this class  of problems, precisely we prove that the extremal solution of \eqref{01} is regular for $N \leq N^*$ and it is singular for $N > N^*$. We see that the critical dimension depends only on the nonlinearity $f(s)$ and does not depend of the Riemanian manifold $\mathcal{M}$ (cf. Theorem \ref{13} and Table \ref{64}). For that, we establish $L^{\infty}$ estimates, which are crucial in our argument to obtain regularity of the extremal solutions. We also prove multiplicity of solutions near the extremal parameter and uniqueness on it (cf. Theorem \ref{36} and Theorem \ref{37}). Moreover, we prove radial symmetry and monotonicity for semi-stable solutions of \eqref{01} if $\Omega=\mathcal{B}_R$ is a geodesic ball of a Riemannian model $\mathcal{M}$ (cf. Theorem \ref{07}). 

\subsection{Statement of main results} Before we state our main results we recall some standard notations and definitions related with problem \eqref{01}. Next we are assuming the following values for $s_0$, which depends of the type of considered nonlinearity, precisely,
\begin{flalign*}
\begin{array}{lccclll}
(i)   &   s_0 & = & +\infty & \text{if} & f(s) = e^s       & \left(\text{Gelfand}\right)\vspace{0.15cm}\\ 
(ii)  &   s_0 & = & +\infty & \text{if} & f(s) = (1+s)^m   & \left(\text{Power-type}\right)\vspace{0.15cm}\\
(iii) &   s_0 & = & 1       & \text{if} & f(s) = 1/(1-s)^2 & \left(\text{MEMS}\right)\vspace{0.15cm}
\end{array}&&
\end{flalign*}

\begin{flushleft}
\textit{Classical solution}: $u\in C^2(\Omega)\cap C(\overline{\Omega})$ is a classical solution of \eqref{01} if it solves \eqref{01} in the classical sense (i.e. using the classical notion of derivative).
\end{flushleft}

\begin{flushleft}
\textit{Weak solution}: $u\in W_{0}^{1,2}(\Omega)$ is a weak solution of \eqref{01} if $0\leq u < s_0$ almost everywhere in $\Omega$ and $u=s_0$ in a subset with measure zero such that $f(u)\in L^{2}(\Omega)$ and
\begin{equation}\label{57}
\int_{\Omega}\left(\nabla_g u\cdot\nabla_g \phi + \phi A\cdot\nabla_g u\right)\, \mathrm{d} v_g = \lambda\int_{\Omega}f(u)\phi\, \mathrm{d} v_g,\quad \forall \phi\in W_{0}^{1,2}(\Omega).
\end{equation}
\end{flushleft}
We also consider \textit{weak subsolution} (\textit{weak supersolution}) in analogy with this definition. For instance, $u\in W_{0}^{1,2}(\Omega)$ is a weak subsolution of \eqref{01} if $0\leq u < s_0$ almost everywhere in $\Omega$ and $u=s_0$ in a subset with measure zero such that $f(u)\in L^{2}(\Omega)$ with $``\leq "$ $(``\geq ")$ instead of $``="$ in \eqref{57}.
\begin{flushleft}
\textit{Minimal solution}: For problem \eqref{01}, we say that a weak solution $u\in W_{0}^{1,2}(\Omega)$ is a minimal solution if $u \leq v$ almost everywhere for all $v$ supersolution. We denote minimal solution of \eqref{01} by $\underline{u}_\lambda$.
\end{flushleft}
\begin{flushleft}
\textit{Regular solution}: We say that a weak solution $u$ of \eqref{01} is a regular solution if $\sup_{\Omega} u <s_0.$
\end{flushleft}
\begin{flushleft}
\textit{Semi-stable solution}: We say that a classical solution $u$ of \eqref{01} is semi-stable solution provided that
\begin{equation}\label{bora}
\int_{\Omega} \left(|\nabla_g \xi|^{2} + \xi A(x)\cdot\nabla_g \xi\right) \mathrm{d} v_g \geq \int_{\Omega}\lambda f^\prime (u)\xi^2\mathrm{d} v_g,\quad\forall\xi\in C_{0}^{1}(\Omega).
\end{equation}
\end{flushleft}
Analogously one defines stable solution if we have the strict inequality in \eqref{bora}. We say that a classical solution $u$ of \eqref{01} is unstable if $u$ is not semi-stable.

We can now formulate our main results. Using some ideas in \cite{BREVAZ1997,MIGPUE1980}, we prove the existence of a critical parameter $\lambda^*$ which is related with the solvability of \eqref{01}. Moreover, we obtain upper and lower estimates for this critical parameter $\lambda^*$. This implies that the explosion threshold cannot drop arbitrarily close to zero, no matter what the field $\phi$ is. Different from \cite{BERKISNOVRYZ2010}, here we do not assume incompressibility of the flow, that is, $\nabla\cdot\phi=0$.

\begin{theorem}\label{22}
There exists a critical parameter
 $\lambda^*\in \mathbb{R}, \; \lambda^*>0$ such that 
\begin{description}
\item[$(i)$] For all $ \lambda \in (0, \lambda^*)$ problem \eqref{01} possesses  an unique minimal classical solution $\underline{u}_\lambda$ which is positive and semi-stable, and the map $\lambda\rightarrow \underline{u}_\lambda(x)$ is increasing on $(0,\lambda^*)$ for each $x\in\Omega.$ 
\item[$(ii)$] The following estimates hold
\[
\beta (1-\beta\max_{\overline{\Omega}}w)^2\leq\lambda^*\leq\lambda_1,
\]
where $w$ and $\beta$ are given in Lemma \ref{28} and $\lambda_1$ is the first eigenvalue of $-\Delta_g + A\cdot\nabla_g$ with zero Dirichlet boundary condition.
\item[$(iii)$] For $\lambda>\lambda^*$ there are no solutions, even in weak sense.
\item[$(iv)$] semi-stable solutions of \eqref{01} are necessarily minimal solutions.
\end{description}
\end{theorem}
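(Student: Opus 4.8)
The plan is to set $\lambda^*$ equal to the supremum of those $\lambda>0$ for which \eqref{01} admits a weak solution, and to obtain (i)--(iv) from the method of sub- and supersolutions together with the convexity of $f$. The ingredients I would use are: the weak and strong maximum principles for the (non--self--adjoint, zeroth--order--free) operator $L:=-\Delta_g+A\cdot\nabla_g$ on $\Omega$ with homogeneous Dirichlet data, and its invertibility $W^{2,p}\cap W_0^{1,p}(\Omega)\to L^p(\Omega)$; the Krein--Rutman principal eigenvalue $\lambda_1>0$ of $L$, which coincides with that of the formal adjoint $L^*v=-\Delta_g v-\mathrm{div}_g(vA)$ and possesses a positive eigenfunction $\psi$; and that each $f$ in \eqref{60} is smooth, increasing, strictly convex on $[0,s_0)$, with $f(0)>0$ and $f(s)\ge s$ there.

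\emph{Existence of the minimal branch and the lower bound in (ii).} Let $w$ and $\beta$ be as in Lemma \ref{28}. Then $\beta w$ is a classical supersolution of \eqref{01} as soon as $\lambda\,f(\beta\max_{\overline{\Omega}}w)\le\beta$, which in the MEMS case reads $\lambda\le\beta(1-\beta\max_{\overline{\Omega}}w)^2$; since $0$ is a subsolution with $0\le\beta w$, the monotone iteration $Lu_{k+1}=\lambda f(u_k)$, $u_0=0$, is nondecreasing (here $f$ increasing is used), remains below $\beta w$, and converges by elliptic regularity to a classical solution. Hence $\lambda^*\ge\beta(1-\beta\max_{\overline{\Omega}}w)^2>0$, the left inequality in (ii). For arbitrary $\lambda\in(0,\lambda^*)$ pick $\mu\in(\lambda,\lambda^*)$ for which \eqref{01} has a weak solution $u_\mu$; then $u_\mu$ is a supersolution of \eqref{01} at level $\lambda$, so the same iteration started at $0$ below $u_\mu$ produces a classical solution $\underline{u}_\lambda$ with $0\le\underline{u}_\lambda\le u_\mu$ (in particular $\sup_\Omega\underline{u}_\lambda<s_0$, so a bootstrap gives full regularity). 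An induction shows each iterate stays below any supersolution of \eqref{01}, whence $\underline{u}_\lambda$ is the unique minimal solution; $f(0)>0$ and the strong maximum principle give $\underline{u}_\lambda>0$; and if $\lambda'<\lambda''<\lambda^*$ then $\underline{u}_{\lambda''}$ is a supersolution at level $\lambda'$, so $\underline{u}_{\lambda'}\le\underline{u}_{\lambda''}$, while $L(\underline{u}_{\lambda''}-\underline{u}_{\lambda'})\ge(\lambda''-\lambda')f(\underline{u}_{\lambda'})>0$ and the strong maximum principle make this strict, which is the monotonicity in (i). (That a weak solution at some $\mu$ forces a classical solution at every smaller parameter is the standard lemma, in the spirit of \cite{BREVAZ1997,MIGPUE1980}, that makes the two natural definitions of $\lambda^*$ agree.)

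\emph{The upper bound in (ii) and part (iii).} If $u$ is any weak solution of \eqref{01}, taking $\phi=\psi$ in \eqref{57} gives
\[
\lambda\int_\Omega f(u)\,\psi\,\mathrm{d}v_g=\int_\Omega\bigl(\nabla_g u\cdot\nabla_g\psi+\psi\,A\cdot\nabla_g u\bigr)\,\mathrm{d}v_g=\lambda_1\int_\Omega u\,\psi\,\mathrm{d}v_g,
\]
and since $f(s)\ge s$ while $u\ge0$, $u\not\equiv0$, $\psi>0$, this forces $\lambda\le\lambda_1$; hence $\lambda^*\le\lambda_1$. A weak solution being in particular a supersolution, the construction above shows that the set of admissible $\lambda$ is an interval with supremum $\lambda^*$, so \eqref{01} has no solution, even weak, for $\lambda>\lambda^*$, which is (iii).

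\emph{Stability: (iv), and the semi--stability in (i).} Part (iv) is the softer one. Let $u$ be a semi--stable classical solution of \eqref{01} for some $\lambda$; then $\lambda\le\lambda^*$, the minimal solution $\underline{u}_\lambda$ exists (at $\lambda=\lambda^*$ as the extremal solution) and $\underline{u}_\lambda\le u$. Put $w:=u-\underline{u}_\lambda\in W_0^{1,2}(\Omega)$, $w\ge0$; testing $Lw=\lambda(f(u)-f(\underline{u}_\lambda))$ against $w$ and using convexity in the form $f(u)-f(\underline{u}_\lambda)\le f'(u)(u-\underline{u}_\lambda)$,
\[
\int_\Omega\bigl(|\nabla_g w|^2+w\,A\cdot\nabla_g w\bigr)\,\mathrm{d}v_g=\lambda\int_\Omega\bigl(f(u)-f(\underline{u}_\lambda)\bigr)w\,\mathrm{d}v_g\le\lambda\int_\Omega f'(u)\,w^2\,\mathrm{d}v_g,
\]
whereas semi--stability of $u$ (extended to $W_0^{1,2}(\Omega)$ by density) gives the opposite inequality; equality then forces $f(u)-f(\underline{u}_\lambda)=f'(u)(u-\underline{u}_\lambda)$ a.e., and strict convexity of $f$ yields $w\equiv0$, i.e. $u=\underline{u}_\lambda$. (The advection term occurs identically on both sides, so it is harmless here.) For the semi--stability of $\underline{u}_\lambda$ in (i), fix $\mu\in(\lambda,\lambda^*)$ with solution $u_\mu\ge\underline{u}_\lambda$ and set $w:=u_\mu-\underline{u}_\lambda>0$; convexity gives
\[
\bigl(L-\lambda f'(\underline{u}_\lambda)\bigr)w=\mu f(u_\mu)-\lambda f(\underline{u}_\lambda)-\lambda f'(\underline{u}_\lambda)w\ge(\mu-\lambda)f(u_\mu)>0,
\]
so $w$ is a positive strict supersolution of the linearized operator $M_\lambda:=L-\lambda f'(\underline{u}_\lambda)$, which already yields positivity of the generalized principal eigenvalue of $M_\lambda$. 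The main obstacle is to upgrade this to the quadratic--form inequality \eqref{bora}: since $L$ is not self--adjoint, \eqref{bora} does not follow from positivity of the principal eigenvalue of $M_\lambda$ alone. I would argue by the ground--state substitution $\xi=w\eta$ with $\eta\in C_0^1(\Omega)$ in $\int_\Omega(|\nabla_g\xi|^2+\xi\,A\cdot\nabla_g\xi)\,\mathrm{d}v_g$, combined with the pointwise bound $M_\lambda w\ge0$; this reduces \eqref{bora} to the nonnegativity of a weighted quadratic form in $\eta$, whose residual advection terms are the delicate point and are to be absorbed using the bounds on $A$ provided by Lemma \ref{28}.
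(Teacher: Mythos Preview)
Most of your outline is sound and parallels the paper: the sub/supersolution construction of the minimal branch and the lower bound from Lemma~\ref{28}, the argument for (iii), and your proof of (iv) via convexity and the equality case (the paper runs a closely related argument with the one-parameter functional $I_{\theta,\phi}$ and differentiates in~$\theta$, testing with $(u-v)^+$). Your upper bound $\lambda^*\le\lambda_1$ via the adjoint eigenfunction is in fact cleaner than the paper's route, which instead builds an iteration $v_{n+1}=L^{-1}\bigl((\lambda_1+\epsilon)v_n\bigr)$ trapped below a solution of \eqref{01} and passes to a limit to manufacture an eigenfunction at $\lambda_1+\epsilon$.

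The genuine gap is your argument for the semi-stability of $\underline{u}_\lambda$. You correctly isolate the difficulty: a positive strict supersolution $w$ for $M_\lambda=L-\lambda f'(\underline{u}_\lambda)$ gives positivity of the Krein--Rutman principal eigenvalue, but \eqref{bora} is a quadratic-form statement, equivalent to nonnegativity of the first eigenvalue of the \emph{symmetric} operator $-\Delta_g-\tfrac12(\mathrm{div}_gA)-\lambda f'(\underline{u}_\lambda)$, and the two notions do not coincide for general~$A$. Your ground-state substitution $\xi=w\eta$ leaves, after using $M_\lambda w\ge0$ against $w\eta^2$, the residual
\[
\int_\Omega w^2|\nabla_g\eta|^2\,\mathrm{d}v_g+\int_\Omega w^2\,\eta\,A\cdot\nabla_g\eta\,\mathrm{d}v_g
=\int_\Omega w^2|\nabla_g\eta|^2\,\mathrm{d}v_g-\tfrac12\int_\Omega\mathrm{div}_g(w^2A)\,\eta^2\,\mathrm{d}v_g,
\]
which has no reason to be nonnegative; Lemma~\ref{28} only produces a supersolution $\beta w$ and gives no control on $\mathrm{div}_g(w^2A)$, so it cannot absorb this term as you suggest.

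The paper bypasses this obstacle by arguing by contradiction (Lemma~\ref{24}): assume $\underline{u}_\lambda$ is not semi-stable, take the associated first eigenpair $(\kappa,\psi)$ with $\psi>0$ and $\kappa<0$, and check via a second-order Taylor expansion that $\psi_\epsilon:=\underline{u}_\lambda-\epsilon\psi$ is a supersolution of \eqref{01} for small~$\epsilon$, since
\[
-\Delta_g\psi_\epsilon+A\cdot\nabla_g\psi_\epsilon-\lambda f(\psi_\epsilon)=-\epsilon\kappa\psi-\lambda\epsilon^2 f''(\xi)\psi^2\ge0.
\]
This contradicts the minimality of $\underline{u}_\lambda$. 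The point is that the perturbation argument uses the \emph{eigenfunction equation} $L_{u,\lambda}\psi=\kappa\psi$ directly, so the advection term is already packaged into $\kappa$ and never has to be estimated on its own; you should replace your ground-state computation with this contradiction step.
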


In view of item $(i)$ of Theorem \ref{22}, we can define the function
\[
u^*(x):=\lim_{\lambda \nearrow \lambda^*}\underline{u}_\lambda(x),
\]
which is measurable, since it is a limit of measurable functions. If $u^*$ is a weak solution of \eqref{01} at  $\lambda=\lambda^*$ it will be called extremal solution.

An important question which has attracted a lot of attention is whether the extremal solution $u^*$ is a classical solution. Here we are going to prove regularity of the extremal solution $u^*$ if the dimension of $\mathcal{M}$ is below the critical dimension $N^*$. Here we stress the fact that the critical dimension depends only on the nonlinearity $f(s)$ and does not depend of the manifold $\mathcal{M}$, which is given precisely by, 
\begin{flalign}\label{64}
\begin{array}{lccclll}
(i)   &  N^* & = & 10 & \text{if} & f(s) = e^s       & \left(\text{Gelfand}\right)\vspace{0.15cm}\\ 
(ii)  &  N^* & = & 11 & \text{if} & f(s) = (1+s)^m   & \left(\text{Power-type}\right)\vspace{0.15cm}\\
(iii) &  N^* & = & 8  & \text{if} & f(s) = 1/(1-s)^2 & \left(\text{MEMS}\right)\vspace{0.15cm}
\end{array}&&
\end{flalign}

\begin{theorem}\label{13}
The extremal solution $u^*$ of $(P_{\lambda^*})$ is classical provided that $1\leq N < N^*$.
\end{theorem}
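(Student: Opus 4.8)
The plan is to prove the regularity of the extremal solution by the now-classical route of Nedev/Crandall–Rabinowitz/Cabré–Capella, suitably adapted to the Riemannian setting with advection. The key structural fact provided by Theorem \ref{22}(i) is that each minimal solution $\underline u_\lambda$ is semi-stable, so the stability inequality \eqref{bora} holds with $u=\underline u_\lambda$. The strategy is to use \eqref{bora} with a carefully chosen test function built from $\underline u_\lambda$ itself (a power of $f(\underline u_\lambda)$, or $f(\underline u_\lambda)^{1/2}$ minus a constant, depending on the nonlinearity) and combine it with the equation \eqref{57} to derive an a priori bound $\|f(\underline u_\lambda)\|_{L^p(\Omega)}\le C$ that is \emph{uniform in $\lambda<\lambda^*$}, for every $p$ below a threshold $p_0=p_0(f)$. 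Passing to the limit $\lambda\nearrow\lambda^*$ then gives $f(u^*)\in L^{p}(\Omega)$ for all $p<p_0$, and elliptic regularity for the operator $-\Delta_g+A\cdot\nabla_g$ upgrades this to $u^*\in W^{2,p}\hookrightarrow L^\infty$ once $p>N/2$; the condition $N<N^*$ is exactly what makes $N/2<p_0$, so that a valid choice of $p$ exists and $\sup_\Omega u^*<s_0$, i.e. $u^*$ is regular, hence classical by bootstrap.

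First I would record the standard pointwise inequality relating $f$, $f'$ and $F(s):=\int_0^s f$ that underlies the whole estimate. Concretely, for a test function of the form $\xi=f(u)^{\alpha}-f(0)^{\alpha}$ (with $\alpha>1/2$), expanding $|\nabla_g\xi|^2$ and $\xi A\cdot\nabla_g\xi$ produces terms with $f(u)^{2\alpha-2}f'(u)^2|\nabla_g u|^2$; plugging $\xi$ into \eqref{bora} and using the equation to replace $\int \lambda f'(u)\xi^2$-type quantities, the advection terms are controlled because $A$ is smooth on $\overline\Omega$ (so $\|A\|_{L^\infty}<\infty$) and the gradient terms can be absorbed using Young's inequality — the crucial point being that this absorption is possible precisely when $\alpha$ is strictly below the algebraic threshold dictated by $f$. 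This is where the three values $N^*=10,11,8$ come from: for $f(s)=e^s$ one tests with $e^{\alpha u}-1$, $\alpha<2$, yielding $f(u^*)\in L^p$ for $p<5$, hence $u^*\in L^\infty$ when $N/2<5$, i.e. $N\le 9<10$; for the power and MEMS cases the analogous computation gives the stated thresholds. In each case one also needs the elementary inequality $\int_\Omega f(u)^{q}\,\mathrm{d}v_g\le C\big(1+\int_\Omega f'(u)f(u)^{q-1}u\,\mathrm{d}v_g\big)$ or similar, obtained by testing \eqref{57} with $u$, to close the argument.

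The main obstacle — and the reason the Riemannian/advection setting needs care rather than being a verbatim copy of the flat case — is the advection term $A\cdot\nabla_g u$, which destroys the variational (symmetric) structure and the exact integration-by-parts identities that make Nedev's computation clean. I would handle it by the same device that must already be implicit in the proof of Theorem \ref{22}: the operator $-\Delta_g+A\cdot\nabla_g$ can be written, after multiplying by a suitable positive weight $\rho$ solving a first-order relation with $A$, in divergence form $-\rho^{-1}\,\mathrm{div}_g(\rho\,\nabla_g\cdot)$ when $A$ is a gradient, or more generally one simply keeps the $A$-terms and estimates $\big|\int \xi A\cdot\nabla_g\xi\big|\le \tfrac{\varepsilon}{2}\int|\nabla_g\xi|^2+\tfrac{1}{2\varepsilon}\|A\|_\infty^2\int\xi^2$, absorbing the first piece into the left side of \eqref{bora}/\eqref{57} and treating $\int\xi^2$ as a lower-order term to be controlled by interpolation and the $L^1$ bound on $f(u)$ (which itself follows from testing the equation with the torsion-type function $w$ of Lemma \ref{28}, or with the first eigenfunction). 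The geometry of $\mathcal M$ enters only through constants in the Sobolev embedding $W^{1,2}_0(\Omega)\hookrightarrow L^{2N/(N-2)}(\Omega)$ and in elliptic $L^p$ estimates on the bounded domain $\Omega$, both of which hold with $\Omega$-dependent (hence harmless) constants; this is why $N^*$ depends only on $f$. Once the uniform $L^p$ bound on $f(\underline u_\lambda)$ is in hand, the limit $u^*$ lies in $W^{2,p}(\Omega)$ with $p>N/2$, so $u^*\in C^{0,\gamma}(\overline\Omega)$ with $\sup_\Omega u^*<s_0$; standard Schauder bootstrapping for $-\Delta_g+A\cdot\nabla_g=\lambda^* f(u^*)$ with $f$ smooth away from $s_0$ then yields $u^*\in C^2(\Omega)\cap C(\overline\Omega)$, completing the proof.
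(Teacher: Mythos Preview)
Your overall route is the paper's: plug functions of $\underline u_\lambda$ into both the semi-stability inequality \eqref{bora} and the weak formulation \eqref{57}, combine the two to obtain $\|f(\underline u_\lambda)\|_{L^p}\le C$ uniformly in $\lambda$ for every $p<p_0(f)$, pass to the limit $\lambda\nearrow\lambda^*$, and finish with $W^{2,p}$ elliptic regularity and Sobolev embedding. This is precisely what the paper does in Lemmas~\ref{03}, \ref{39}, \ref{26} (the test functions are $(1-u)^{-t}-1$, $e^{tu}-1$, $(1+u)^{bt}-1$ and their ``doubled'' companions) and Propositions~\ref{51}, \ref{54}, \ref{55}.

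The one substantive difference is how the advection is dispatched. The paper does \emph{not} absorb it by Young's inequality; it invokes the Hodge--Helmholtz decomposition (Lemma~\ref{48}): one writes $A=-\nabla_g a+C$ with $\mathrm{div}_g(e^aC)=0$, multiplies the equation and the stability inequality by $e^a$, and observes that for any test function of the form $\eta=\eta(u)$ one has $\int_\Omega e^a\,\eta\,C\cdot\nabla_g\eta=-\tfrac12\int_\Omega\eta^2\,\mathrm{div}_g(e^aC)=0$, and likewise $\int_\Omega e^a\,\phi\,C\cdot\nabla_g u=0$ when $\phi=\phi(u)$. Thus the advection contribution \emph{vanishes identically} in both inequalities, the Crandall--Rabinowitz algebra runs with the exact sharp constants, and no $\varepsilon$-absorption or interpolation of residual terms is needed. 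Your Young-based scheme can be pushed through (for each $t$ strictly below the threshold you pick $\varepsilon$ small and then interpolate the lower-order $\int\xi^2$), but it is messier and you must check that the $\varepsilon$-dependent constants do not degrade the threshold; a cleaner substitute, if you do not want the decomposition, is the direct identity $\int_\Omega\xi\,A\cdot\nabla_g\xi=-\tfrac12\int_\Omega\xi^2\,\mathrm{div}_gA$, which is already lower order with a fixed coefficient.

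There is also a genuine gap in the MEMS case. From $(1-u^*)^{-2}\in L^p$ with $p>N/2$ you obtain $u^*\in W^{2,p}\hookrightarrow C^{0,\gamma}(\overline\Omega)$, hence $u^*\in L^\infty$; but since here $s_0=1$, this does \emph{not} yet give $\sup_\Omega u^*<1$. The paper closes this in Proposition~\ref{51} by a short contradiction: if $u^*(x_0)=1$, the H\"older bound forces $1-u^*(x)\le C\,\mathrm{dist}(x,x_0)^{\gamma}$, hence $(1-u^*)^{-2p}\ge c\,\mathrm{dist}(x,x_0)^{-2\gamma p}$; choosing $p=3N/4$ (admissible since $3N/4<7/2+\sqrt6$ for $N\le 7$) with $\gamma=2-N/p=2/3$ gives $2\gamma p=N$, so $\int_\Omega(1-u^*)^{-2p}\,\mathrm{d}v_g=\infty$, contradicting $(1-u^*)^{-2}\in L^p$. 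You need to include this step.
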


\begin{remark}
	To obtain regularity of the extremal solutions defined on domains of $\mathbb{R}^N$ usually make use of an argument based on Hardy-type inequality (see Subsection \ref{59}). It is well known that a complete open Riemannian manifold with non-negative Ricci curvature of dimension greater than or equal to three in which a Hardy inequality are satisfied are close to the Euclidean space. In view of this, to obtain regularity results of extremal solutions for any Riemannian manifold one have to use an argument free of Hardy-type inequality.
\end{remark}

We now study uniqueness at the critical parameter $\lambda^*$.

\begin{theorem}\label{36}
For dimension $1\leq N < N^*$, the extremal solution $u^*$ is the unique classical solution of $(P_{\lambda^*})$ among all weak solutions.
\end{theorem}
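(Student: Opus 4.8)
The plan is to argue by contradiction: I will show that a weak solution of $(P_{\lambda^*})$ different from $u^*$ would force the linearized operator at $u^*$ to have a strictly positive principal eigenvalue, which by the implicit function theorem allows one to continue the solution branch beyond $\lambda^*$, contradicting Theorem \ref{22}$(iii)$. First I would record two preliminary facts. By Theorem \ref{13}, since $1\leq N<N^*$ the extremal solution $u^*$ is a classical solution of $(P_{\lambda^*})$ with $\sup_\Omega u^*<s_0$; being a nontrivial nonnegative solution it is positive in $\Omega$ and, by Hopf's lemma, $\partial u^*/\partial\nu<0$ on $\partial\Omega$. Next, if $v\in W_0^{1,2}(\Omega)$ is any weak solution of $(P_{\lambda^*})$, then for each $\lambda\in(0,\lambda^*)$ it is a weak supersolution of $(P_\lambda)$ (because $f\geq 0$ and $\lambda^* f(v)\geq\lambda f(v)$), so $\underline u_\lambda\leq v$ a.e.; letting $\lambda\nearrow\lambda^*$ yields $u^*\leq v$ a.e. It therefore suffices to prove that $v=u^*$.

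Suppose $v\neq u^*$ and put $w:=v-u^*\in W_0^{1,2}(\Omega)$, so that $w\geq 0$ and $|\{w>0\}|>0$. Subtracting the weak formulations \eqref{57} for $v$ and for $u^*$ and using that each nonlinearity in \eqref{60} is strictly convex (hence $f(v)-f(u^*)\geq f'(u^*)\,w$, with strict inequality a.e.\ on $\{w>0\}$), I obtain, for every $0\leq\phi\in W_0^{1,2}(\Omega)$,
\[
\int_\Omega\bigl(\nabla_g w\cdot\nabla_g\phi+\phi\,A\cdot\nabla_g w\bigr)\,\mathrm{d} v_g
=\lambda^*\int_\Omega\bigl(f(v)-f(u^*)\bigr)\phi\,\mathrm{d} v_g
\geq\lambda^*\int_\Omega f'(u^*)\,w\,\phi\,\mathrm{d} v_g,
\]
with strict inequality whenever $\phi>0$ on a positive-measure subset of $\{w>0\}$. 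Thus $w$ is a nonnegative, nonzero, strict weak supersolution of $L:=-\Delta_g+A\cdot\nabla_g-\lambda^* f'(u^*)$ under zero Dirichlet conditions. Let $\mu_1$ be the principal eigenvalue of $L$ on $\Omega$ (real and simple, with a positive eigenfunction; via Krein--Rutman / Berestycki--Nirenberg--Varadhan theory, the formal adjoint $L^*$ shares the same $\mu_1$ and admits an eigenfunction $\varphi^*>0$ in $\Omega$, which is smooth since $\|u^*\|_\infty<s_0$). Testing the displayed relation with $\phi=\varphi^*$ and integrating by parts,
\[
\mu_1\int_\Omega w\,\varphi^*\,\mathrm{d} v_g=\int_\Omega w\,L^*\varphi^*\,\mathrm{d} v_g=\int_\Omega\varphi^*\,Lw\,\mathrm{d} v_g>0,
\]
and since $w\geq 0$, $w\not\equiv 0$, $\varphi^*>0$, I conclude $\mu_1>0$.

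Consequently $L$ is an isomorphism from $W^{2,p}(\Omega)\cap W_0^{1,p}(\Omega)$ onto $L^p(\Omega)$ for some $p>N$. Applying the implicit function theorem to $G(\lambda,u):=-\Delta_g u+A\cdot\nabla_g u-\lambda f(u)$ at $(\lambda^*,u^*)$, where $D_uG(\lambda^*,u^*)=L$, produces a $C^1$ curve $\lambda\mapsto u_\lambda$ with $u_{\lambda^*}=u^*$ solving the equation with zero Dirichlet data for $\lambda$ near $\lambda^*$; since $W^{2,p}\hookrightarrow C^1(\overline\Omega)$ one has $u_\lambda\to u^*$ in $C^1(\overline\Omega)$. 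Combining this with $u^*>0$ in $\Omega$, $\partial u^*/\partial\nu<0$ on $\partial\Omega$ (so positivity persists under small $C^1$ perturbations vanishing on $\partial\Omega$) and $\|u^*\|_\infty<s_0$, it follows that for $\lambda$ slightly larger than $\lambda^*$ the function $u_\lambda$ is a positive classical solution of $(P_\lambda)$ with $\|u_\lambda\|_\infty<s_0$, contradicting Theorem \ref{22}$(iii)$. Hence $v=u^*$, which is the asserted uniqueness.

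The main obstacle is the lack of a variational structure: the advection term $A\cdot\nabla_g u$ blocks the classical energy-monotonicity argument along the segment $[u^*,v]$, so the proof must instead be carried out through the principal eigenvalue of the \emph{non-self-adjoint} operator $L$ and of its adjoint. A secondary point requiring care is verifying that the continued branch $u_\lambda$ really satisfies every structural requirement of $(P_\lambda)$, namely positivity (via Hopf's lemma) and, for the MEMS nonlinearity, the pointwise constraint $u_\lambda<1$.
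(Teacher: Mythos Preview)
Your argument is correct and reaches the same conclusion as the paper, but the logical order is inverted. The paper first establishes that the principal eigenvalue $\mu_{1,\lambda^*}$ of $L_{u^*,\lambda^*}$ equals $0$: semi-stability of the minimal branch gives $\mu_{1,\lambda^*}\geq 0$, and $\mu_{1,\lambda^*}>0$ is excluded by the same implicit function theorem continuation you use. With $\mu_{1,\lambda^*}=0$ in hand, the paper then invokes its Lemma~\ref{31}(ii), whose proof is a convexity/sliding argument: one shows $u^*\leq v$ via the second variation $\partial^2_{\theta\theta}I_{1,(u^*-v)^+}\geq 0$, and then, exploiting that the principal eigenfunction $\psi_{1,\lambda^*}$ spans the kernel of $L_{u^*,\lambda^*}$, slides $v$ by $t\psi_{1,\lambda^*}$ to force $u^*=v$.

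Your route sidesteps Lemma~\ref{31} entirely: assuming $v\neq u^*$, strict convexity of $f$ makes $w=v-u^*$ a \emph{strict} nonnegative supersolution of $L_{u^*,\lambda^*}$, and pairing with the positive principal eigenfunction $\varphi^*$ of the \emph{adjoint} operator immediately yields $\mu_{1,\lambda^*}>0$; the implicit function theorem then produces the contradiction. This is shorter and conceptually cleaner---it replaces the sliding argument by a one-line duality computation---at the modest price of importing the Krein--Rutman/Berestycki--Nirenberg--Varadhan fact that $L$ and $L^*$ share the same principal eigenvalue with positive eigenfunctions. The paper's approach, by isolating Lemma~\ref{31} as a stand-alone comparison principle (valid for any regular solution with $\mu_1=0$, not just $u^*$), is more reusable elsewhere. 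Both proofs rest on the same two pillars: strict convexity of $f$ and the implicit function theorem obstruction to $\mu_{1,\lambda^*}>0$.
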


Next, we present a multiplicity result for this class of equations when $\lambda$ is smaller and close to the critical parameter $\lambda^*$. The proof is carried out by the Mountain Pass Theorem in the same spirit of \cite{espghoguo2009}.

\begin{theorem}\label{37}
Let $1\leq N< N^*$ and $A=\nabla_ga$. Then, there exists $\delta>0$ such that for any $\lambda\in(\lambda^*-\delta,\lambda^*)$ we have a second branch of solutions $U_\lambda$ given by mountain pass for $J_{\epsilon,\lambda}$ on $W_{0}^{1,2}(\Omega).$
\end{theorem}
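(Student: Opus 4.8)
The plan is to use the gradient structure $A=\nabla_g a$ to recast \eqref{01} in divergence (hence variational) form and then to produce the second solution as a mountain pass critical point lying strictly above the minimal solution $\underline{u}_\lambda$, following the scheme of \cite{espghoguo2009}. Since $a\in C^\infty(\overline\Omega)$ and $-\operatorname{div}_g\!\big(e^{-a}\nabla_g u\big)=e^{-a}\big(-\Delta_g u+\nabla_g a\cdot\nabla_g u\big)$, setting $\rho:=e^{-a}$ (a smooth weight with $0<c_0\le\rho\le c_1$ on $\overline\Omega$) turns \eqref{01} into $-\operatorname{div}_g(\rho\,\nabla_g u)=\lambda\,\rho\,f(u)$; hence $\|v\|^2:=\int_\Omega\rho\,|\nabla_g v|^2\,\mathrm{d} v_g$ is an equivalent norm on $W_0^{1,2}(\Omega)$ and the embeddings $W_0^{1,2}(\Omega)\hookrightarrow L^p(\Omega)$ are compact in the subcritical range of $p$. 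Fix $\lambda\in(0,\lambda^*)$. By minimality of $\underline{u}_\lambda$ we look for $u=\underline{u}_\lambda+v$ with $v\ge0$, so $v$ must solve $-\operatorname{div}_g(\rho\nabla_g v)=\lambda\rho\,\big[f(\underline{u}_\lambda+v)-f(\underline{u}_\lambda)\big]$ in $\Omega$, $v=0$ on $\partial\Omega$. Since $1\le N<N^*$, the extremal solution $u^*$ is classical (Theorem \ref{13}) and $\underline{u}_\lambda\nearrow u^*$, so $m:=\sup_\Omega u^*<s_0$ and $\sup_\Omega\underline{u}_\lambda\le m$ uniformly in $\lambda$. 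Fix $\epsilon\in(0,s_0-m)$ (read $s_0-\epsilon=1/\epsilon$ when $s_0=+\infty$) and let $f_\epsilon\in C^1([0,\infty))$ coincide with $f$ on $[0,s_0-\epsilon]$ and be extended beyond $s_0-\epsilon$ so as to grow like a strictly subcritical power at infinity, with $f_\epsilon\ge f\ge0$, $f_\epsilon'\ge0$ and an Ambrosetti--Rabinowitz condition; put $F_\epsilon(s)=\int_0^s f_\epsilon$ and
\[
J_{\epsilon,\lambda}(v)=\frac12\int_\Omega\rho\,|\nabla_g v|^2\,\mathrm{d} v_g-\lambda\int_\Omega\rho\,\Big[F_\epsilon(\underline{u}_\lambda+v^+)-F_\epsilon(\underline{u}_\lambda)-f(\underline{u}_\lambda)\,v\Big]\,\mathrm{d} v_g,\qquad v\in W_0^{1,2}(\Omega),
\]
so that every nonnegative critical point $v$ of $J_{\epsilon,\lambda}$ gives, via $U_\lambda:=\underline{u}_\lambda+v$, a solution of the $f_\epsilon$-regularized version of \eqref{01} lying above $\underline{u}_\lambda$.

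Next one verifies the mountain pass geometry. For every $\mu\in(\lambda,\lambda^*)$ the solution $\underline{u}_\mu$ is semi-stable, and since $f$ is increasing and convex and $\underline{u}_\lambda\le\underline{u}_\mu$, this yields the coercivity estimate
\[
\int_\Omega\rho\,\big(|\nabla_g w|^2-\lambda f'(\underline{u}_\lambda)w^2\big)\,\mathrm{d} v_g\ \ge\ \frac{\lambda^*-\lambda}{\lambda^*+\lambda}\,\|w\|^2,\qquad w\in W_0^{1,2}(\Omega).
\]
Together with the fact that, by subcriticality of $f_\epsilon$, the nonlinear part of $J_{\epsilon,\lambda}$ equals its quadratic leading term plus a remainder of order $o(\|v\|^2)$ near the origin, this makes $0$ a strict local minimum with $J_{\epsilon,\lambda}(0)=0$; while the subcritical superquadratic growth of $F_\epsilon$ forces $J_{\epsilon,\lambda}(t\varphi)\to-\infty$ as $t\to+\infty$ for a fixed $\varphi>0$. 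Strict subcriticality of $f_\epsilon$ together with the Ambrosetti--Rabinowitz condition gives the Palais--Smale condition, so the Mountain Pass Theorem furnishes a critical point $v_{\epsilon,\lambda}\not\equiv0$ at level $c_{\epsilon,\lambda}=\inf_\gamma\max_{[0,1]}J_{\epsilon,\lambda}\circ\gamma>0$. Testing the equation with $v_{\epsilon,\lambda}^-$ gives $v_{\epsilon,\lambda}\ge0$, and then the strong maximum principle and Hopf's lemma give $v_{\epsilon,\lambda}>0$ in $\Omega$; hence $U_\lambda=\underline{u}_\lambda+v_{\epsilon,\lambda}$ is a solution of the regularized problem strictly above $\underline{u}_\lambda$, a genuine second solution of it, which by Theorem \ref{22}$(iv)$ cannot be semi-stable — consistently with its mountain pass (Morse-index-one) character.

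It remains to remove the regularization, i.e.\ to show that for $\lambda$ close enough to $\lambda^*$ one has $\sup_\Omega U_\lambda<s_0-\epsilon$, so that $f_\epsilon(U_\lambda)=f(U_\lambda)$ and $U_\lambda$ solves \eqref{01}. Evaluating $J_{\epsilon,\lambda}$ along the ray $t\mapsto t\,\phi_{1,\lambda}$, where $\phi_{1,\lambda}>0$ is the first eigenfunction of $L_\lambda:=-\operatorname{div}_g(\rho\nabla_g\cdot)-\lambda\rho f'(\underline{u}_\lambda)$ with first eigenvalue $\mu_{1,\lambda}>0$ (since $\lambda<\lambda^*$), and using the strict convexity of $f$ so that the cubic Taylor term has a favorable sign, one optimizes in $t$ and obtains $c_{\epsilon,\lambda}\le C\,\mu_{1,\lambda}^{\,3}$ with $C$ independent of $\epsilon$; moreover $\mu_{1,\lambda}\searrow0$ as $\lambda\nearrow\lambda^*$, for otherwise $u^*$ — classical by Theorem \ref{13} — would be strictly stable and the implicit function theorem would continue the minimal branch beyond $\lambda^*$, against Theorem \ref{22}$(iii)$. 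Hence $c_{\epsilon,\lambda}\to0$. Along any sequence $\lambda_k\nearrow\lambda^*$, the Ambrosetti--Rabinowitz inequality keeps $\{v_{\epsilon,\lambda_k}\}$ bounded in $W_0^{1,2}(\Omega)$, and subcriticality of $f_\epsilon$ gives (up to a subsequence) strong convergence to some $v_\infty\ge0$ that is a critical point of $J_{\epsilon,\lambda^*}$ at level $0$; thus $u^*+v_\infty$ solves the regularized problem at $\lambda^*$ above $u^*$. Provided one can guarantee $\sup_\Omega(u^*+v_\infty)<s_0$, this is a classical solution of \eqref{01} at $\lambda^*$ strictly above the extremal one, contradicting the uniqueness of Theorem \ref{36}; hence $v_\infty\equiv0$, $v_{\epsilon,\lambda}\to0$ uniformly, and the statement follows with $\delta>0$ small. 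The main obstacle is exactly this last step — obtaining a bound, uniform in $\lambda$ near $\lambda^*$ and in $\epsilon$, that keeps $U_\lambda$ and the limit $u^*+v_\infty$ strictly below $s_0$. This is where the dimension restriction $1\le N<N^*$ is genuinely used, via $L^\infty$ estimates for solutions of small Morse index together with an elliptic bootstrap, to prevent the singular (respectively supercritical) regime from being reached in the limit; carrying this out uniformly in the regularization parameter is the delicate technical heart of the proof.
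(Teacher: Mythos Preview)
Your overall architecture --- recast \eqref{01} in weighted divergence form, truncate $f$ subcritically, verify a mountain pass geometry above the minimal solution, and then remove the truncation for $\lambda$ close to $\lambda^*$ --- is the same as the paper's. The translation $u=\underline{u}_\lambda+v$ and your quantitative coercivity estimate for the local minimum are minor variations (the paper instead shows $\underline{u}_\lambda$ is a $C^1$-local minimum and then invokes \cite{KHAMOT2012} to pass to $W_0^{1,2}$), and either route works.

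The real issue is the last step, which you yourself flag as incomplete. You want $\sup_\Omega U_\lambda<s_0-\epsilon$ for $\lambda$ near $\lambda^*$, and you propose to get it via $c_{\epsilon,\lambda}\le C\mu_{1,\lambda}^3\to0$, compactness, and then the uniqueness Theorem~\ref{36}. But Theorem~\ref{36} is stated for the \emph{original} problem \eqref{01}, so to invoke it on the limit $u^*+v_\infty$ you must already know that $u^*+v_\infty<s_0$ --- exactly the bound you are trying to establish. This is circular, and the ``small Morse index $L^\infty$ estimates'' you allude to are neither carried out nor needed.

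The paper closes this gap with a one-line observation you are missing: the truncated nonlinearity $g_\epsilon$ is itself convex and increasing, so the uniqueness argument of Lemma~\ref{31} applies verbatim to the \emph{regularized} problem \eqref{32}. Since $u^*<s_0-\epsilon$ (by the choice of $\epsilon$) one has $g_\epsilon(u^*)=f(u^*)$, hence $u^*$ is the unique solution of \eqref{32} at $\lambda=\lambda^*$. Palais--Smale for the family $(J_{\epsilon,\lambda_n})_n$ along $\lambda_n\nearrow\lambda^*$ (your Lemma~\ref{35}) and elliptic regularity then force $U_{\epsilon,\lambda}\to u^*$ in $C(\overline\Omega)$, which immediately yields $U_{\epsilon,\lambda}\le s_0-\epsilon$ for $\lambda$ close to $\lambda^*$ and removes the truncation. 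No bound on $c_{\epsilon,\lambda}$ and no Morse-index machinery is required; the dimension restriction $N<N^*$ enters only through Theorem~\ref{13} to make $u^*$ classical and fix $\epsilon$.
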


\begin{remark}\label{02}
We can use elliptic estimates to see that any regular solution $u$ of \eqref{01} belongs to $C^{1,\alpha}(\overline{\Omega})$. For that, we cover $\mathcal{M}$ by coordinate neighbourhood and consider a partition of unity subordinate to this cover. By using Schauder estimates it is easy to prove that $u\in C^{2,\alpha}(\overline{\Omega})$ and consequently any regular solution of \eqref{01} is a classical solution (see \cite{GILTRU,HEB2000}).
\end{remark}

\begin{remark}
The class of semi-stable solutions includes local minimizers, minimal solutions, extremal solutions and certain class of solutions found between a sub and a supersolution.
\end{remark}

We also obtain qualitative properties for semi-stable solutions of problem \eqref{01} if $\Omega=\mathcal{B}_R$ is a geodesic ball of a Riemannian model and $A$ is a radial vector field. Precisely, we prove that such solutions are radially symmetric and decreasing.  We say that $u\in C^{2}(\mathcal{B}_R)$ is radially symmetric and decreasing if $u(x)=u(r),$ where $r=\mathrm{dist}(x,\mathcal{O})$ and $u_r(r)<0$ for all $r\in (0,R)$.

The class of Riemannian model $(\mathcal{M}, g)$ includes the classical space forms. Precisely, a manifold $\mathcal{M}$ of dimension $N\geq 2$ admitting a pole $\mathcal{O}$ and whose metric $g$ is given, in polar coordinates around $\mathcal{O}$, by
\begin{equation}\label{08}
\mathrm{d} s^2=\mathrm{d} r^2+\psi(r)^2\mathrm{d} \theta^2 \quad \text{for }r \in (0,R)\text{ and }\theta\in\mathbb{S}^{N-1},
\end{equation}
where $r$ is by construction the Riemannian distance between the point $P=(r,\theta)$ to the pole $\mathcal{O}$, $\psi$ is a smooth positive function in $(0,R)$ and $\mathrm{d}\theta^2$ is the canonical metric on the unit sphere $\mathbb{S}^{N-1}$. Note that our results apply to the important case of space forms, i.e., the unique complete and simply connected Riemannian manifold of constant sectional curvature $K_\psi$ corresponding to the choice of $\psi$ namely,
\begin{flalign*}
\begin{array}{lccclll}
(i)   &   \psi(r) & = & \sinh r,  & \quad  K_\psi=-1 & \quad \left(\text{Hyperbolic space}\right)\\ 
(ii)  &   \psi(r) & = & r,        & \quad  K_\psi=0  & \quad \left(\text{Euclidean space}\right)\\
(iii) &   \psi(r) & = & \sin r,   & \quad  K_\psi=1  & \quad \left(\text{Elliptic space}\right)
\end{array}&&
\end{flalign*}

Next, we state a radial symmetry result for semi-stable solutions defined on a geodesic ball of $\mathcal{M}$. The proof is based on the fact that any angular derivative of $u$ would be either a sign changing first eigenfunction of the linearized operator at $u$ or identically zero, thanks to the semistability condition. In addiction, the monotonicity of $u$ is due to the positivity of the nonlinearity.

\begin{theorem}\label{07}
If $u\in C^2(\mathcal{B}_R)$ is a classical stable solution of \eqref{01} with a radial vector field $A$, then $u$ is radially symmetric and decreasing.
\end{theorem}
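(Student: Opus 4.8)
The plan is to exploit the semistability inequality \eqref{bora} applied to angular derivatives of $u$, following the classical moving-plane-free argument (as in Cabré–Capella for radial stable solutions in $\mathbb R^N$), but adapted to the warped-product structure \eqref{08}. First I would set up coordinates: write the metric as in \eqref{08}, so that the Laplace–Beltrami operator splits as $\Delta_g = \partial_{rr} + (N-1)\frac{\psi'}{\psi}\partial_r + \frac{1}{\psi^2}\Delta_{\mathbb S^{N-1}}$, and since $A$ is radial, $A\cdot\nabla_g = a(r)\partial_r$ for a smooth function $a$. For a fixed rotation field generating a one-parameter group of isometries of $\mathbb S^{N-1}$, let $T$ be the corresponding Killing field on $\mathcal B_R$ and set $v := Tu = \partial_\theta u$ (an angular derivative). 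Differentiating \eqref{01} along $T$, and using that $T$ commutes with $\Delta_g$ and with $A\cdot\nabla_g$ (because both are rotationally invariant when $A$ is radial), I get the linearized equation
\[
-\Delta_g v + A\cdot\nabla_g v = \lambda f'(u)\,v \quad\text{in }\mathcal B_R,
\]
with $v=0$ on $\partial\mathcal B_R$ (since $u\equiv 0$ there and $T$ is tangent to the boundary sphere).

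Next I would bring in stability. The key point is that the linearized operator $L_u := -\Delta_g + A\cdot\nabla_g - \lambda f'(u)$ is, under \eqref{bora}, nonnegative in the appropriate weak sense; moreover, if $u$ is \emph{strictly} stable (which is the hypothesis of Theorem \ref{07}), then $L_u$ has strictly positive first eigenvalue $\mu_1>0$ on $W_0^{1,2}(\mathcal B_R)$. But $v$ solves $L_u v = 0$ weakly with $v\in W_0^{1,2}$; testing against $v$ itself and using the strict inequality in \eqref{bora} forces $v\equiv 0$. Since this holds for every angular derivative (the rotation fields span all directions tangent to the spheres $\{r=\text{const}\}$), $u$ is constant on each geodesic sphere, i.e. $u=u(r)$ is radial. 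One subtlety here is that $A\cdot\nabla_g$ is not self-adjoint, so the quadratic form in \eqref{bora} is the symmetrized one; I should note that the drift term contributes $\int \xi\, A\cdot\nabla_g\xi = \tfrac12\int A\cdot\nabla_g(\xi^2) = -\tfrac12\int (\operatorname{div}_g A)\xi^2$, so \eqref{bora} is genuinely a statement about the symmetric operator $-\Delta_g - \tfrac12\operatorname{div}_g A - \lambda f'(u)$, and the eigenvalue argument goes through for this self-adjoint operator; the computation $L_u v = 0$ still makes sense because $v$ solves the non-symmetric linearized equation and the symmetrized quadratic form evaluated at $v$ equals $\int(|\nabla_g v|^2 + v A\cdot\nabla_g v - \lambda f'(u)v^2)$, which is what we test.

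Once radial symmetry is established, monotonicity follows from a maximum-principle argument using positivity of $f$. With $u=u(r)$, equation \eqref{01} reads $-u'' - \big((N-1)\tfrac{\psi'}{\psi} - a(r)\big)u' = \lambda f(u) > 0$ on $(0,R)$. Set $q(r) := (N-1)\tfrac{\psi'}{\psi} - a(r)$ and rewrite as $-(p u')' = \lambda p\, f(u)$ where $p(r) = \exp\!\big(\int^r q\big)>0$. Integrating from $0$ to $r$ and using $u'(0)=0$ (smoothness at the pole) gives $p(r)u'(r) = -\lambda\int_0^r p(t) f(u(t))\,\mathrm dt < 0$, hence $u'(r)<0$ for all $r\in(0,R)$, which is exactly the claimed decreasing behavior. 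I expect the main obstacle to be the non-self-adjointness bookkeeping in the stability step — making precise that the angular derivative $v$, which a priori solves only the non-symmetric linearized PDE, can be fed into the symmetric form \eqref{bora} and annihilated — together with the regularity/commutation justification that $T$ genuinely commutes with $A\cdot\nabla_g$, which requires $A$ to be radial in the strong sense that its expression is invariant under the rotation group (equivalently $[T,A]=0$); this should be spelled out carefully but is essentially a coordinate computation.
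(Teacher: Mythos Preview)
Your proposal is correct and follows essentially the same route as the paper: show that each angular derivative $v=\partial_\theta u$ satisfies the linearized equation $L_{u,\lambda}v=0$ in $W_0^{1,2}(\mathcal B_R)$, then use strict positivity of $\mu_{1,\lambda}$ to force $v\equiv 0$, and finally integrate the radial ODE (with the positivity of $f$) to obtain $u_r<0$. Your treatment of the non-self-adjoint drift and the commutation $[T,A\cdot\nabla_g]=0$ is more explicit than the paper's, but the argument is the same.
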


We can improve the result of Theorem \ref{13} giving an estimate for the radial case.

\begin{theorem}\label{06}
Let $u$ be the extremal solution of $(P_{\lambda^*})$ on a geodesic ball $\mathcal{B}_r$ of a Riemannian model with $2\leq N < N^*$. Then $u^*$ is a classical solution and
\[
\Vert u^* \Vert_\infty\leq c,
\]
where $c>0$ is a constant which does not depends of $\lambda$. We emphasize that, for the case $f(u)=1/(1-u)^2$ we have $c<1.$
\end{theorem}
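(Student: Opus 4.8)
The plan is to exploit the radial symmetry already guaranteed by Theorem \ref{07}: since the minimal solutions $\underline{u}_\lambda$ are semi-stable and $A$ is radial, each $\underline{u}_\lambda$ is radially symmetric and decreasing, hence so is the extremal function $u^*$ obtained as the monotone limit. Thus the problem reduces to an ODE in the variable $r=\mathrm{dist}(x,\mathcal O)$, namely $-(\psi^{N-1}u')' + \psi^{N-1}\,a'(r)\,u' = \lambda^*\psi^{N-1}f(u)$ on $(0,R)$ with $u'(0)=0$, $u(R)=0$. First I would record the semi-stability inequality in radial form: testing \eqref{bora} with radial $\xi=\xi(r)$ supported in $(0,R)$ gives
\[
\int_0^R \psi^{N-1}\,|\xi'|^2\,\mathrm{d} r + \frac12\int_0^R \psi^{N-1} a'(r)\,(\xi^2)'\,\mathrm{d} r \ \geq\ \lambda^*\int_0^R \psi^{N-1} f'(\underline{u}_\lambda)\,\xi^2\,\mathrm{d} r,
\]
valid uniformly in $\lambda<\lambda^*$.

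The core of the argument is a uniform $L^\infty$ bound independent of $\lambda$. Here I would combine the semi-stability inequality with a suitable choice of test function to control the behaviour of $u$ near the point where it would blow up. Because $u$ is radial and decreasing, the only candidate singular point is the pole $\mathcal O$ (i.e.\ $r=0$), so the task is to bound $u(0)=\max_{\mathcal B_R}u$. I would adapt the classical Euclidean computation: for the Gelfand and power cases one plugs $\xi = \psi^{-(N-1)/2}\eta(r)\,(e^{\underline{u}_\lambda/?}-1)$-type test functions (or, more robustly, $\xi=\eta(r)(f(\underline u_\lambda)^{1/2+\varepsilon}-\text{const})$ as in \cite{BREVAZ1997,espghoguo2009}) and integrates; the weight $\psi^{N-1}$ behaves like $r^{N-1}$ as $r\to 0$ since $\psi(r)\sim r$, which is exactly why the critical dimension $N^*$ coincides with the Euclidean one and does not see the curvature. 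The advection term contributes $\int \psi^{N-1}a'(r)(\xi^2)'$, which after integration by parts becomes $-\int (\psi^{N-1}a')'\xi^2$; since $a$ is smooth on $\overline\Omega$ this is a lower-order perturbation absorbed into the main estimate, giving $\int_{\mathcal B_R} f'(u)f(u)^{1+2\varepsilon}\,\mathrm d v_g \leq C$ uniformly in $\lambda$ for $N<N^*$. A Moser-type iteration (or the radial Sobolev embedding on the model, where again $\psi\sim r$ near $0$ controls the critical exponent) then converts this integral bound into $\|u^*\|_\infty\leq c$ with $c$ independent of $\lambda$.

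For the MEMS case $f(u)=1/(1-u)^2$ the same scheme gives a bound $\sup u^*\leq c<1$: the uniform integral estimate on $f(u)^{1+2\varepsilon}=(1-u)^{-2-4\varepsilon}$ together with the radial ODE and the decay of $u$ away from the pole forces $\sup_\lambda\sup_{\mathcal B_R}\underline u_\lambda$ to stay strictly below $1$, because if it approached $1$ the integral $\int_0^\rho \psi^{N-1}(1-u)^{-2-4\varepsilon}\,\mathrm d r$ over a small ball around the pole would diverge for $N<8$ by the explicit singular-solution comparison, contradicting the bound. Once $\|u^*\|_\infty<s_0$ is established, $u^*$ is a regular solution, and by Remark \ref{02} elliptic (Schauder) estimates upgrade it to a classical solution of $(P_{\lambda^*})$.

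The main obstacle I anticipate is making the test-function computation genuinely curvature-free: one must check that replacing $r^{N-1}\,\mathrm d r$ by $\psi(r)^{N-1}\,\mathrm d r$ does not shift the threshold, i.e.\ that the only place the dimension enters is through the leading asymptotics $\psi(r)\sim r$ at the pole, while the global behaviour of $\psi$ on $[\delta,R]$ contributes only bounded, $\lambda$-independent constants (using that $u$ is bounded away from $s_0$ on $\{r\geq\delta\}$ by monotonicity). Handling the advection terms in this radial setting is routine once $A=\nabla_g a$ is used to integrate by parts, but care is needed to keep all constants independent of $\lambda$.
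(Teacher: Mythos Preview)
Your overall strategy is sound and is the same skeleton the paper uses: radial symmetry reduces the problem to an ODE, one plugs test functions of the form $(f(u))^{\alpha}-\mathrm{const}$ into both the semi-stability inequality and the equation itself, and combines the two to obtain a uniform $L^{p}$ bound on $f(u)$ with weight $\psi^{N-1}$. The paper's choices are exactly $\eta=(1-u)^{-t}-1$, $\phi=(1-u)^{-2t-1}-1$ (MEMS), $\eta=e^{tu}-1$, $\phi=e^{2tu}-1$ (Gelfand), and $\eta=(1+u)^{bt}-1$, $\phi=(1+u)^{2bt-1}-1$ (power), which are the concrete realizations of your ``$f(u)^{1/2+\varepsilon}$'' heuristic.

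Where you diverge from the paper is in the passage from the weighted $L^{p}$ estimate to the $L^{\infty}$ bound. You propose a Moser iteration or a radial Sobolev embedding, relying on $\psi(r)\sim r$ near the pole to recover the Euclidean threshold. The paper instead does something much more elementary and entirely one-dimensional: by the mean value theorem there is $c\in(0,r)$ with $u(0)-u(r)=-u'(c)\,r$, and integrating the radial equation from $0$ to $c$ gives $-e^{a(c)}\psi^{N-1}(c)\,u'(c)=\int_0^c e^a\psi^{N-1}f(u)$, which by H\"older is controlled by the $L^p$ norm already obtained (Lemmas~\ref{17}, \ref{45}, \ref{58}). Feeding this pointwise bound back into the $L^p$ estimate and computing an explicit one-variable integral then yields the quantitative inequality $\|u^*\|_\infty\leq c$ directly, with $c<1$ in the MEMS case. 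This avoids any appeal to Sobolev embeddings or asymptotics of $\psi$; the constants simply absorb $\sup_{[0,R]}\psi$ and $\sup_{[0,R]}e^{a}$. Your approach could in principle be pushed through, but it is heavier and you would have to be careful that the curvature-dependent lower-order terms in a radial Sobolev inequality do not spoil the $\lambda$-independence.

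One further point: you write the advection as $A=\nabla_g a$ and integrate by parts. The theorem allows a general radial $A$, and the paper handles this via the Hodge--Helmholtz decomposition $A=-\nabla_g a+C$ with $\mathrm{div}(e^aC)=0$ (Lemma~\ref{48}); the divergence-free part $C$ then drops out of every test-function integral because $\int e^a\psi^{N-1}C(r)\,\eta\eta_r\,\mathrm d r=0$. Without this step your argument covers only gradient drifts.
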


\subsection{Motivation and previous results}\label{59}
In order to motivate our results we begin by giving a brief survey on this subject. Singular elliptic problems of the form
\begin{equation}\label{56}
- L u =\lambda h(u),
\end{equation}
for a second order elliptic operator $L$ under various boundary conditions, has been extensively studied since the papers of D.~Joseph and T.~Lundgren \cite{JOSLUN1972}, J.~Keener and H.~Keller \cite{KEEKEL1974} and M. Crandall and P. Rabinowitz \cite{CRARAB1975, CRARAB1973}.  It has been shown in these pioneering works that there exists a critical threshold $\lambda^*>0$ such that \eqref{56} admits positive solutions for $0<\lambda<\lambda^*$, while no positive solutions exist for $\lambda>\lambda^*$. In \cite{MIGPUE1980}, F.~Mignot and J-P.~Puel studied regularity results to certain nonlinearities, namely,  $h(s)=e^s$, $g(s)=s^m$ with $m>1$, $g(s)=1/(1-s)^k$ with $k>0$. Very recently, this analysis was completed by N. Ghoussoub and Y. Guo \cite{GHOGUO2007} for the MEMS case, precisely, $-\Delta u=\lambda f(x)/(1-u)^2$ in a bounded domain $\Omega$ under zero Dirichlet boundary condition, among other results they proved that $N=8$ is the critical dimension for this class of problems.

H. Brezis and J. L. Vazquez in \cite{BREVAZ1997} treated the delicate issue of regularity of solutions at extreme value $\lambda=\lambda^*$ of 
\begin{equation}\label{5a}
- \Delta  u =\lambda h(u), \quad u=0, \; \partial \Omega,
\end{equation}
where the nonlinerity $h(s)$ is continuous, positive, increasing and convex function defined for $u\geq 0$ with $h(0)>0$ and 
$\lim_{s\rightarrow \infty}h(s)/s=+\infty$. Typical examples are $h(s)=e^s$ and $h(s)=(1+s)^p$, with $p>1$. The authors characterized the singular $H^1$ extremal solutions and the extremal value by a criterion consisting of two conditions: $(i)$ they must be energy solutions, not in $L^{\infty};$ $(ii)$ they must satisfy a Hardy inequality which translates the fact that the first eigenvalue of the linearized operator is nonnegative. In order to apply this characterization to those examples, they also establish a simultaneous generalization for Hardy's and Poincare's inequalities for all dimensions $N \geq 2$. 
 One of main theorem in \cite{BREVAZ1997} is focused on the application to the cases $h(s)=e^s$ and $h(s)=(1+s)^m$, $m>1$, in the unit ball of Euclidean space centred at the origin. If $h(s)=e^s$, the procedure shows that $u^*(x)=\textrm{log}|x|^{-2}$ is an unbounded extremal solution in $H_{0}^{1}$ for $\lambda^*=2(n-2)$ if and only if $n\geq 10$. A similar result is also obtained for the case $h(s)=(1+s)^m$, $m>1$.

For more general nonlinearities $h(s)$ and domains $\Omega$, regularity for solutions of \eqref{5a} at $\lambda=\lambda^*$ has been established by G. Nedev in \cite{NED2000}. Precisely, he proved that $u^*\in L^{\infty}(\Omega)$ if $N \leq 3$, while $u^*\in H^1_0(\Omega)$ if  $N \leq 5$, for every bounded domain $\Omega$ and convex nonlinearity $h(s)$ satisfying
\begin{equation}\label{qwe}
h\in C^1,\text{ nondecreasing, }h(0)>0,\text{ and }\lim_{s\rightarrow +\infty}\frac{h(s)}{s}=+\infty.
\end{equation}
After that X.~Cabr\'{e} in \cite{CAB2010} proved regularity if $N\leq 4$ assuming conditions \eqref{qwe} for convex and bounded domain $\Omega$. Under the same assumptions, X. Cabr\'{e} and M. Sanch\'{o}n in\cite{CABSAN2013} complete the analysis of regularity for dimensions $5 \leq N \leq 9$. In \cite{CAB2010,CABSAN2013} were not assumed convexity on the nonlinearity $h(s)$, but in contrast with Nedev's result, it was assumed convexity of the set $\Omega$.

The following equation has often been used to model a simple electrostatic Micro-Electro-Mechanical system (MEMS) device:
\begin{equation}\label{52}
\left\{
\begin{alignedat}{3}
-\Delta u = & \,\frac{\lambda f(x)}{(1-u)^2} & \quad \text{in} & \quad\Omega,        \\
u =         & \, 0                      &  \text{on}      & \quad\partial\Omega,\\
\end{alignedat}
\right.
\end{equation}
where $\Omega$ is a smooth bounded domain in $\mathbb{R}^N$, $f\in C^\alpha(\overline{\Omega})$, $\lambda>0$ is proportional to the applied voltage and $0<u(x)<1$ denotes the deflection of the membrane. We refer the reader to \cite{espghoguo2009,PELBER2002} for a recent survey on this subject. MEMS are often used to combine electronics with microsize mechanical devices in the design of various types of microscopic machinery. For MEMS equation \eqref{52}, non-existence results and upper bounds for the extremal parameter were established (see \cite{GUOPAN}) in terms of material and geometric properties of the membrane, results further complemented in \cite{GHOGUO2007}, where the existence of minimal solutions for $0<\lambda<\lambda^*$ was proved as well as the existence and uniqueness of the extremal solution for $\lambda=\lambda^*$ provided that dimension $1\leq N\leq 7$. In this dimensional range, the existence of non minimal solutions was obtained in \cite{espghoguo2007}, where the authors study the branch of semi-stable solutions and where existence results in higher dimensions, for a suitable hypothesis, were also established.

About the explosion problem in an incompressible flow we refer to the very interesting work of Berestycky et al. \cite{BERKISNOVRYZ2010} 
which considered the non-selfadjoint elliptic problem
\begin{equation}\label{27}
\left\{
\begin{alignedat}{3}
-\Delta u + \phi\cdot\nabla u= & \, \lambda g(u) & \quad \text{in} & \quad\Omega, \\
u = & \, 0  &  \text{on} & \quad \partial\Omega,\\
\end{alignedat}
\right.
\end{equation}
with a prescribed incompressible flow $\phi(x)$ so that $\nabla\cdot \phi = 0$ where $\Omega$ is a smooth bounded domain of Euclidean space. In this work the authors began to investigate how the presence of an underlying flow and its properties affect the explosion. They were interested in qualitative dependence of the critical explosion $\lambda^*$ with respect to the vector field $\phi$. As observed numerically in \cite{BERKAGJOUSIV1997} for a two dimensional cellular flow, the explosion threshold increases for flows oscillating on a small scale and it may actually decrease if the flow has large scale variations. This analysis motivated our result about upper and lower estimates for $\lambda^*$ for a general vector field $A$ (cf. Theorem \ref{22}).

Still on the non-selfadjoint elliptic problem $\eqref{27}$, C.~Cowan and N.~Ghoussoub \cite{COGH2008} proved regularity results for extremal solutions for the nonlinearities: $f(u)=1/(1-u)^2$ or $f(u)=e^u$ and a general class of advection term (not necessarily incompressible). The argument in \cite{COGH2008} was based on a class of Hardy type inequality contained in \cite{COW20102}. At this point we emphasize that a similar argument can not be applied for a general Riemannian manifold setting to prove regularity (cf. Theorem~\ref{13}), since it is known that the existence of Hardy or Gagliardo-Nirenberg or Caffarelli-Kohn-Nirenberg inequality on a Riemannian manifold implies qualitative properties on the Riemannian manifold. Precisely, it was shown that if $\left(\mathcal{M},g\right)$ is a complete Riemannian manifold with nonnegative Ricci curvature in which a Hardy or Gagliardo-Nirenberg or Caffarelli-Kohn-Nirenberg type inequalities holds then $\mathcal{M}$ is close to Euclidean space in some suitable sense, see \cite{MAN}.

X.~Luo, D.~Ye and F.~Zhou in \cite{LUYEZH2011} studied \eqref{27} where $h:[0,a)\rightarrow\mathbb{R_+}$ with fixed $a\in(0,+\infty)$ satisfies the following condition:
\begin{equation}\label{42}\tag{H}
h\text{ is }C^2\text{, positive, nondecreasing and convex in }[0,a)\text{ with }\lim_{s\rightarrow a^-}h(s)=+\infty.
\end{equation}

The authors in \cite{LUYEZH2011} observed a close similarity between \eqref{27} and the Emden-Fowler equation with superlinear regular nonlinearity,
\begin{equation}\label{38}
\left\{
\begin{alignedat}{3}
-\Delta u = & \, \lambda h(u) & \quad \text{in} & \quad\Omega, \\
u = & \, 0  &  \text{on} & \quad \partial\Omega,\\
\end{alignedat}
\right.
\end{equation}
with $\lambda>0$ and $h:[0,+\infty)\rightarrow [0,+\infty)$ such that 
\begin{equation}\label{44}
h\text{ is }C^2,\text{ nondecreasing convex and }\lim_{s\rightarrow+\infty}\frac{h(s)}{s}=+\infty.
\end{equation}
The problem \eqref{38} can be linked to \eqref{27} where $g:[0,a)\rightarrow\mathbb{R_+}$ with fixed $a\in(0,+\infty)$ satisfies \eqref{42}. Without loss of generality, we can fix $a=1$ and consider the transformation $v=-\mathrm{ln}(1-u)$. Thus let $u$ solving \eqref{27} then $v$ verifies
\begin{equation}\label{43}
\left\{
\begin{alignedat}{3}
-\Delta v+|\nabla v|^2+c(x)\cdot\nabla v = & \, \lambda e^vg(1-e^{-v}):=\lambda h(v) & \quad \text{in} & \quad\Omega, \\
v = 0 \hspace{3.9cm}&  &  \text{on} & \quad \partial\Omega,\\
\end{alignedat}
\right.
\end{equation}
Therefore $h$ satisfies \eqref{44} and $v^*=-\mathrm{ln}(1-u^*)$ is the extremal solution for the problem \eqref{43}. Thus the regularity of $u^*$ is equivalent to the boundedness of $v^*$. We mention that the situation could be very different with the presence of advection terms, see \cite{WEIYE2010,COGH2008}. If the vector field $\phi$ nontrivial the operator $-\Delta+\phi\cdot\nabla$ is not self-adjoint. However if $\phi=-\nabla\gamma$ in $\Omega$ then $-\Delta+\phi\cdot\nabla$ can be rewritten as a self-adjoint operator of the form $-e^{-\gamma}\mathrm{div}(e^{\gamma}\nabla)$. In that case, \eqref{27} admits a variational structure and we can expect more precise estimates of minimal solutions $u_\lambda$, as in the radial case.

Our Theorems \ref{22} and \ref{36} improve and complement some results in \cite{LUYEZH2011} for the nonlinearities described in \eqref{60}. In Theorem \ref{13} we determine the critical dimension $N^*$ and prove regularity for extremal solutions if $N<N^*$. This theorem is close related with Theorem 1.3, 1.4 and 1.5 in \cite{LUYEZH2011} where X.~Luo et al investigated the regularity of extremal solutions of \eqref{27} and it was proved that if $f$ satisfies \eqref{42} and some additional assumptions, then $u^*$ is regular if $N\leq N_0$, where $N_0$ depends on $f$ and in the most significant cases is less than $10$. In our Theorem \ref{06} we do not required that the advection term $\phi$ is the gradient of a smooth radial function, even for $N=2$, to prove our regularity result for radial case for the nonlinearities described in \eqref{60}, differently of Theorem 1.1 in \cite{LUYEZH2011}.

In the past decades, there have been considerable attentions to be paid on the research of singular elliptic problems defined on Riemannian manifolds. A.~Farina, L.~Mari and E.~Valdinoci \cite{FARMARVAL} studied Riemannian manifolds with non-negative Ricci curvature that posses a stable, nontrivial solution of a semilinear equation
\begin{equation}\label{61}
-\Delta_gu=f(u).
\end{equation}
Under suitable assumptions, it was proved symmetry results for the solutions and the rigidity of the underlining manifold. E.~Berchio, A.~Ferrero and G.~Grillo in \cite{BERFERGRI2014} studied existence, uniqueness and stability of radial solutions of \eqref{61} for the Lane-Emden-Fowler equation, that is, $f(s)=|s|^{p-1}s$, on a Riemannian manifold of dimension $N\geq 3$ with a pole. In addiction, the authors obtained that the sign properties and asymptotic behavior of solutions are influenced by the critical Sobolev exponent while the so-called Joseph-Lundgren exponent is involved in the stability of solutions. D.~Castorina and M.~Sanch\'{o}n \cite{CASSAN2013} condidered the problem \eqref{61} in a geodesic ball $\mathcal{B}_R$ with zero Dirichlet boundary condition of a Riemannian model $\mathcal{M}$. They proved radial symmetry and monotonicity for the class of semi-stable solutions. Moreover, they establish $L^\infty$, $L^p$ and $W^{1,p}$ estimates which are optimal and do not depend on the nonlinearity $f(s)$. As an application, under standard assumptions on the nonlinearity $f(s)$, they proved that the extremal solution $u^*$ is bounded whenever $N \leq 9$ and they studied the extremal solution for some exponential and power nonlinearities using an improved Hardy inequality to establish the optimality of their regularity results.

\subsection{Outline} The paper is organized as follows. In the next section we bring a version of Maximum principle, we prove a Sub- and Super-solution method and a version of Hodge-Helmholtz decomposition. In Section \ref{49}, we study the existence of extremal parameter $\lambda^*$ and minimal solutions $\underline{u}_\lambda$. The Section \ref{50} is devoted to prove monotonicity results for the branch of minimal solutions and $L^p$-estimates for $\underline{u}_\lambda$ uniformly in $\lambda$, and we determine the critical dimensions for this class of problems for singular nonlinearities of type MEMS, Gelfand and power case. By using this estimates, we prove that the extremal solution $u^*:=\lim_{\lambda \nearrow \lambda^*}\underline{u}_\lambda$ is classical whenever the dimension of $\mathcal{M}$ is below the critical dimension. In Section \ref{63}, for the particular case where $\mathcal{M}$ is a Riemannian model and $\Omega$ is a geodesic ball of $\mathcal{M}$, we establish symmetry and monotonicity for the class of semi-stable solutions and we also prove $L^\infty$-estimates for $u^*$. In Section \ref{66} we analyze the branch of minimal solutions and we prove multiplicity of solutions if $\lambda\in(\lambda^*-\delta,\lambda^*)$ for some $\delta>0$ and uniqueness at $\lambda^*.$

\section{Key-ingredients} We use a Comparison Principle for weak solutions of quasilinear elliptic differential equation in divergence form on complete Riemannian manifold. We need a simple version of Theorem 3.3 found in \cite{ANTMUGPUC2007}.

\begin{proposition}[Maximum Principle]
Let $w$ a weak supersolution of $-\Delta_g u +A\cdot\nabla u=0$. If $w\geq 0$ on $\partial\Omega$, then $w\geq 0$ in $\Omega.$
\end{proposition}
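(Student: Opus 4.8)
The statement is the weak maximum principle for the operator $Lu:=-\Delta_g u+A\cdot\nabla_g u$, which has no zeroth–order term; my plan is to reduce it to the comparison principle of \cite{ANTMUGPUC2007} by verifying that its structural hypotheses are met here in the simplest possible way, and to record the energy identity that drives that result. First I would unwind the definitions: ``$w$ is a weak supersolution of $Lu=0$'' means $w\in W^{1,2}(\Omega)$ with $\int_{\Omega}(\nabla_g w\cdot\nabla_g\phi+\phi\,A\cdot\nabla_g w)\,\mathrm{d}v_g\geq 0$ for all $0\leq\phi\in W^{1,2}_0(\Omega)$, while ``$w\geq 0$ on $\partial\Omega$'' means $w^-:=\max\{-w,0\}\in W^{1,2}_0(\Omega)$. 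The goal is then to prove $w^-\equiv 0$.

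Next I would cast $Lu=0$ in divergence form $-\operatorname{div}_g\mathcal{A}(x,\nabla_g u)+\mathcal{B}(x,u,\nabla_g u)=0$ with $\mathcal{A}(x,\xi)=\xi$ and $\mathcal{B}(x,s,\xi)=A(x)\cdot\xi$, and check the hypotheses of the cited comparison theorem. Ellipticity/monotonicity of $\mathcal{A}$ is trivial, since $(\mathcal{A}(x,\xi)-\mathcal{A}(x,\eta))\cdot(\xi-\eta)=|\xi-\eta|^2$; the term $\mathcal{B}$ is independent of $s$, hence non–increasing in $s$, which plays the role of the sign condition $c\leq 0$; and $\mathcal{B}$ has linear growth in $\xi$ with constant $\sup_{\overline\Omega}|A|<\infty$ because $A$ is smooth on the compact set $\overline\Omega$. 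With these checks in place I would apply the comparison principle with the trivial subsolution $\underline u\equiv 0$ and the supersolution $w$, noting that $(0-w)^+=w^-\in W^{1,2}_0(\Omega)$ encodes $0\leq w$ on $\partial\Omega$ in the trace sense, and conclude $0\leq w$ a.e. in $\Omega$.

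To make the mechanism transparent (and to have a self-contained variant in reserve), I would test the supersolution inequality with $\phi=w^-$: the integrand vanishes on $\{w\geq 0\}$, while on $\{w<0\}$ one has $\nabla_g w=-\nabla_g w^-$, so $\nabla_g w\cdot\nabla_g w^-=-|\nabla_g w^-|^2$ and $w^-A\cdot\nabla_g w=-\tfrac12 A\cdot\nabla_g((w^-)^2)$; integrating the latter by parts (legitimate since $w^-\in W^{1,2}_0(\Omega)$, so the boundary term drops) leads to
\[
\int_{\Omega}|\nabla_g w^-|^2\,\mathrm{d}v_g\ \leq\ \tfrac12\int_{\Omega}(\operatorname{div}_g A)\,(w^-)^2\,\mathrm{d}v_g .
\]

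The hard part will be precisely that $A\cdot\nabla_g w$ is not in divergence form and carries no sign: the displayed inequality, combined with the Poincaré inequality, only forces $w^-\equiv 0$ under a smallness relation between $\|(\operatorname{div}_g A)^+\|_{L^\infty}$ and the first Dirichlet eigenvalue of $-\Delta_g$ on $\Omega$. Removing this restriction is exactly what the truncation argument behind \cite{ANTMUGPUC2007} accomplishes — testing at levels $\ell\downarrow 0$, controlling $\|w^-\|_{L^2}$ on the shrinking support of the relevant gradient through the Sobolev inequality on $\Omega$, and absorbing the right-hand side — and since here $\mathcal{A}$ is the (Riemannian) Laplacian and $\mathcal{B}$ is affine in $\nabla_g u$ with bounded coefficients and no $u$–dependence, all of its hypotheses hold on $(\mathcal M,g)$ with no further assumption on $A$, yielding $w\geq 0$ in $\Omega$. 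An alternative I would keep in reserve, when $\Omega$ fits in a coordinate chart, is to rewrite $Lu=0$ as a uniformly elliptic equation with bounded coefficients and no zeroth–order term and invoke the Euclidean weak maximum principle of \cite{GILTRU}, though this requires the usual care in reconciling the maximum sets across charts.
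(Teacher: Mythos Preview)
Your approach is exactly the paper's: the paper does not give an independent proof of this proposition at all, but simply records it as ``a simple version of Theorem~3.3 found in \cite{ANTMUGPUC2007}''. Your verification that $\mathcal{A}(x,\xi)=\xi$ and $\mathcal{B}(x,s,\xi)=A(x)\cdot\xi$ satisfy the structural hypotheses of that comparison theorem (strict monotonicity of $\mathcal{A}$, non-increase of $\mathcal{B}$ in $s$, linear growth in $\xi$ with bounded coefficient since $A\in C^\infty(\overline\Omega)$), together with the comparison against the trivial subsolution $0$, is precisely the content the paper leaves implicit. The additional energy computation with $\phi=w^-$ and the discussion of the truncation/absorption mechanism are more than the paper provides, but they are correct and useful commentary rather than a different route.
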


For the sake of completeness, we prove the Sub and Supersolution result in Proposition \ref{12} using the Monotone Iteration Method. In this way, T. Kura \cite{KUR1989} has proved many results about the existence of a solution between sub and supersolutions for quasilinear problems.

\begin{proposition}[The sub- and super-solution method]\label{12}
Let $\underline{u}$ and $\overline{u}$ subsolution and supersolution of \eqref{01}, respectively, that satisfies $\underline{u}\leq\overline{u}$ a.e. in $\Omega$. Then problem \eqref{01} has a weak solution $u$ such that $\underline{u}\leq u \leq \overline{u}$ a.e. in $\Omega$.
\end{proposition}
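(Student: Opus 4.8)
The plan is to construct the solution by monotone iteration starting from the subsolution $\underline{u}$. First I would set up the iteration scheme. Since $f$ is increasing on $[0,s_0)$ but not necessarily Lipschitz near $s_0$ (indeed it blows up at $s_0$ in the MEMS case), I would first truncate: on the interval $[\min_\Omega \underline u, \max_\Omega \overline u]$, which is a compact subinterval of $[0,s_0)$ because $\overline u$ is a supersolution and hence bounded away from $s_0$, the function $f$ is Lipschitz; let $K>0$ be a constant so large that $s\mapsto \lambda f(s)+Ks$ is nondecreasing there. Then define $u_0=\underline u$ and, inductively, let $u_{n+1}\in W_0^{1,2}(\Omega)$ be the unique weak solution of the linear problem
\begin{equation*}
-\Delta_g u_{n+1}+A\cdot\nabla_g u_{n+1}+Ku_{n+1}=\lambda f(u_n)+Ku_n \quad\text{in }\Omega,\qquad u_{n+1}=0\text{ on }\partial\Omega.
\end{equation*}
Existence and uniqueness of $u_{n+1}$ follows from Lax–Milgram, since for $K$ large the bilinear form $\int_\Omega(\nabla_g u\cdot\nabla_g\phi+\phi A\cdot\nabla_g u+Ku\phi)\,\mathrm dv_g$ is coercive on $W_0^{1,2}(\Omega)$ (the first-order term is absorbed by Young's inequality and the zeroth-order term), and the right-hand side lies in $L^2(\Omega)$.

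Next I would prove by induction the ordering $\underline u=u_0\le u_1\le\dots\le u_n\le u_{n+1}\le\overline u$ a.e. in $\Omega$. The step $u_0\le u_1$ uses that $\underline u$ is a subsolution: $-\Delta_g(u_1-u_0)+A\cdot\nabla_g(u_1-u_0)+K(u_1-u_0)=\lambda f(u_0)+Ku_0-(-\Delta_g u_0+A\cdot\nabla_g u_0+Ku_0)\ge 0$ in the weak sense, so the Maximum Principle (the Proposition quoted above, applied to the operator $-\Delta_g+A\cdot\nabla_g+K$, which satisfies the same comparison principle) gives $u_1\ge u_0$. For the inductive step $u_n\le u_{n+1}\Rightarrow u_{n+1}\le u_{n+2}$, subtract the defining equations and use monotonicity of $s\mapsto\lambda f(s)+Ks$ together with $u_n\le u_{n+1}$ to see the right-hand side is $\ge 0$, then apply the Maximum Principle again. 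The bound $u_{n+1}\le\overline u$ is the analogous comparison with the supersolution. Hence $(u_n)$ is a.e. monotone increasing and bounded by $\overline u$, so it converges pointwise a.e. to some measurable $u$ with $\underline u\le u\le\overline u$.

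Finally I would upgrade pointwise convergence to a weak solution. Testing the equation for $u_{n+1}$ with $u_{n+1}$ itself and using the uniform $L^\infty$ bound $0\le u_n\le\max_\Omega\overline u<s_0$ (together with the resulting uniform bound on $\lambda f(u_n)+Ku_n$ in $L^2$) yields a uniform bound for $\|u_n\|_{W_0^{1,2}(\Omega)}$; thus, up to a subsequence, $u_n\rightharpoonup u$ weakly in $W_0^{1,2}(\Omega)$ and, by Rellich, strongly in $L^2(\Omega)$, which identifies the weak limit with the pointwise limit. By dominated convergence $f(u_n)\to f(u)$ in $L^2(\Omega)$. Passing to the limit in the weak formulation for $u_{n+1}$ (the advection term passes by weak convergence of $\nabla_g u_n$ against the fixed test function times the $L^\infty$-bounded, $L^2$-convergent factor, or more simply test against $\phi\in C_0^1$ and use weak convergence of gradients) gives that $u$ satisfies \eqref{57}, and the $K$-terms on both sides cancel. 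Since $u\le\overline u<s_0$ a.e., $u$ is a genuine weak solution of \eqref{01} with $\underline u\le u\le\overline u$. The main obstacle is the singular/superlinear nature of $f$ at $s_0$: the whole argument hinges on the a priori confinement $\underline u\le u_n\le\overline u$ keeping all iterates in the region where $f$ is Lipschitz and bounded, which is exactly what makes the choice of $K$ and the $L^2$-control of the right-hand sides legitimate.
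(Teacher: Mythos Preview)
Your argument is essentially the paper's own proof: the same monotone iteration with a shifted linear operator (the paper's constant $c$ is your $K$), the Maximum Principle for the ordering $\underline u\le u_n\le u_{n+1}\le\overline u$, and weak compactness in $W_0^{1,2}(\Omega)$ to pass to the limit; you simply supply considerably more detail than the paper does. One small caveat: the assertion that $\max_\Omega\overline u<s_0$ is not guaranteed by the paper's definition of weak supersolution (only $\overline u<s_0$ a.e.\ and $f(\overline u)\in L^2$), but this is harmless here since each of the three nonlinearities in \eqref{60} is already increasing---so monotonicity of $s\mapsto\lambda f(s)+Ks$ needs no Lipschitz bound---and $0\le f(u_n)\le f(\overline u)\in L^2$ gives the required $L^2$ control of the right-hand sides without an $L^\infty$ bound on $\overline u$.
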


\begin{proof}
Denote by $u_0=\underline{u}$. We define a sequence $u_n$ inductively where each $u_n$ is the unique weak solution of the problem
\begin{equation}\label{09}
\left\{
\begin{alignedat}{3}
-\Delta_g u_n + A\cdot\nabla_g u_n+cu_n= & \,\lambda f(u_{n-1})+cu_{n-1} & \quad \text{in} & \quad\Omega, \\
u_n = & \, 0  &  \text{on} & \quad \partial\Omega.\\
\end{alignedat}
\right.
\end{equation}
This sequence satisfies $\underline{u}\leq u_{n-1}\leq u_n \leq \overline{u}$. In fact, consider \eqref{09} where $n=1.$ We have $u_1\in W_{0}^{1,2}(\Omega)$ and by Maximum Principle follows $\underline{u}\leq u_1 \leq \overline{u}.$ In the same way $u_2\in W_{0}^{1,2}(\Omega)$ and satisfies $\underline{u}\leq u_1\leq u_2\leq \overline{u}$. By induction we have the result i.e., $\underline{u}\leq u_{n-1}\leq u_n \leq \overline{u}$. Now, observe that $u_n$ is bounded in $W_{0}^{1,2}(\Omega)$ and has a subsequence that converges weakly to $u\in W_{0}^{1,2}(\Omega)$. Taking the limit in the equation follows that $u$ is a weak solution of the problem
\[
\left\{
\begin{alignedat}{3}
-\Delta_g u + A\cdot\nabla_g u= & \,\lambda f(u) & \quad \text{in} & \quad\Omega, \\
u = & \, 0  &  \text{on} & \quad \partial\Omega.\\
\end{alignedat}
\right.
\]
\end{proof}

We also have a version of Hodge-Helmholtz decomposition in order to deal with general vector fields $A.$ This decomposition of vector fields is one of the fundamental theorem in fluid dynamics. It describes a vector field in terms of its divergence-free and rotation-free components. For more results in this subject we refer the reader to \cite{PRE2008}.

\begin{lemma}\label{48}
Any vector field $A\in C^{\infty}(\overline{\Omega},TM)$ can be decomposed as $A=-\nabla_ga + C$ where $a$ is a smooth scalar function and $C$ is a smooth bounded vector field such that $\mathrm{div} (e^{a}C)=0$. 
\end{lemma}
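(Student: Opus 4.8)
The plan is to turn the decomposition into a scalar linear elliptic problem via the exponential substitution. If we write the desired field as $C=A+\nabla_g a$, then the requirement $\mathrm{div}(e^{a}C)=0$ becomes
\[
0=\mathrm{div}(e^{a}A)+\mathrm{div}(e^{a}\nabla_g a)=\mathrm{div}(e^{a}A)+\Delta_g(e^{a}),
\]
because $e^{a}\nabla_g a=\nabla_g e^{a}$. Hence, putting $v:=e^{a}$, the whole lemma reduces to producing $v\in C^{\infty}(\overline{\Omega})$ with $v>0$ on $\overline{\Omega}$ such that
\begin{equation}\label{hhaux}
\mathrm{div}(\nabla_g v+vA)=\Delta_g v+A\cdot\nabla_g v+(\mathrm{div}\,A)\,v=0\qquad\text{in }\Omega.
\end{equation}
Indeed, given such a $v$, the function $a:=\log v$ is smooth on $\overline{\Omega}$ (it is a smooth function of $v$, which stays bounded away from $0$ on the compact set $\overline{\Omega}$), the field $C:=A+\nabla_g a$ is smooth, hence bounded, on $\overline{\Omega}$, and \eqref{hhaux} says exactly $\mathrm{div}(e^{a}C)=0$.

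\textbf{Solving \eqref{hhaux}.} I would obtain $v$ by solving the linear Dirichlet problem $\mathcal{L}v=0$ in $\Omega$, $v=1$ on $\partial\Omega$, where $\mathcal{L}v:=-\Delta_g v-A\cdot\nabla_g v-(\mathrm{div}\,A)\,v$; equivalently, with $w:=v-1\in W_{0}^{1,2}(\Omega)$, one solves $\mathcal{L}w=\mathrm{div}\,A$ in $\Omega$. A short integration by parts identifies the formal adjoint of $\mathcal{L}$ as $\mathcal{L}^{*}\phi=-\Delta_g\phi+A\cdot\nabla_g\phi$, that is, precisely the operator appearing in \eqref{01}. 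Since $\mathcal{L}^{*}$ has no zeroth--order term it satisfies the maximum principle, hence has strictly positive principal Dirichlet eigenvalue; as the principal eigenvalues of $\mathcal{L}$ and $\mathcal{L}^{*}$ coincide, $\lambda_{1}(\mathcal{L})>0$ as well, so $0$ is not in the Dirichlet spectrum of $\mathcal{L}$ and the Fredholm alternative yields a unique solution $w$. Smoothness of $\partial\Omega$, $g$, $A$ and of the data then upgrades this to $w\in C^{\infty}(\overline{\Omega})$ by Schauder estimates read in coordinate charts as in Remark \ref{02}; thus $v=w+1\in C^{\infty}(\overline{\Omega})$.

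\textbf{Positivity of $v$.} Because $\lambda_{1}(\mathcal{L})>0$, the operator $\mathcal{L}$ obeys the maximum principle, and applying it to $-v$ (whose boundary value is $-1\le 0$) gives $v\ge 0$ in $\Omega$. To make this strict I would rewrite \eqref{hhaux} as
\[
-\Delta_g v-A\cdot\nabla_g v+\|\mathrm{div}\,A\|_{L^{\infty}(\Omega)}\,v=\big(\|\mathrm{div}\,A\|_{L^{\infty}(\Omega)}+\mathrm{div}\,A\big)\,v\ \ge\ 0,
\]
so that $v$ is a nonnegative supersolution of an operator with nonnegative zeroth--order coefficient; by the strong maximum principle either $v\equiv 0$ or $v>0$ in $\Omega$, and $v\equiv 1$ on $\partial\Omega$ excludes the first alternative. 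Together with $v=1$ on $\partial\Omega$ and compactness of $\overline{\Omega}$ this gives $\min_{\overline{\Omega}}v>0$, which is all the reduction step needed.

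\textbf{Main obstacle.} The genuinely delicate point is that $\mathcal{L}$ is not self--adjoint and its zeroth--order coefficient $-\mathrm{div}\,A$ has no definite sign, so neither a direct variational construction of $v$ nor a naive maximum--principle argument is available; the remedy is to transfer the needed spectral information (positivity of the principal eigenvalue, hence validity of the maximum principle and solvability of the Dirichlet problem at $0$) to the adjoint $-\Delta_g+A\cdot\nabla_g$, which carries no zeroth--order term. Should one prefer to avoid appealing to the equality of principal eigenvalues of $\mathcal{L}$ and $\mathcal{L}^{*}$, the same conclusion follows by first extending $A$ smoothly to a slightly larger smooth domain $\widetilde{\Omega}\supset\overline{\Omega}$ and solving \eqref{hhaux} there. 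Everything else — the algebraic reduction, the Fredholm alternative, and the elliptic regularity — is routine.
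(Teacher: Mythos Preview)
Your argument is correct and arrives at exactly the same scalar equation as the paper, namely $\mathrm{div}(\nabla_g v+vA)=0$ for $v=e^{a}$, but you solve it by a genuinely different mechanism. The paper imposes the \emph{Neumann} condition $(\nabla_g w+wA)\cdot\nu=0$ on $\partial\Omega$ and invokes the Krein--Rutman theorem for the operator $w\mapsto \Delta_g w+\mathrm{div}(wA)$: this yields in one stroke a principal eigenpair $(\mu,w)$ with $w>0$, and integrating the equation over $\Omega$ forces $\mu=0$; positivity up to the boundary then follows from the strong maximum principle/Hopf lemma. You instead impose the \emph{Dirichlet} condition $v=1$ on $\partial\Omega$, obtain existence via the Fredholm alternative (using that the adjoint $-\Delta_g+A\cdot\nabla_g$ has trivial Dirichlet kernel), and then recover positivity from the refined maximum principle for $\mathcal{L}$ via $\lambda_1(\mathcal{L})=\lambda_1(\mathcal{L}^{*})>0$. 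Your route is arguably more elementary in that it avoids Krein--Rutman, at the price of having to justify the equality of principal eigenvalues (or, as you note, bypass it by solving on a slightly larger domain); the paper's route packages existence and positivity into a single spectral statement but relies on the Neumann principal eigenfunction, which some readers may find less familiar. Either way the algebraic verification that $a=\log v$, $C=A+\nabla_g a$ does the job is identical.
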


\begin{proof}
Let $\nu$ the unit outer normal on $\partial\Omega.$ Using Krein-Rutman theorem, we can find a positive solution $w$ of
\[
\left\{
\begin{alignedat}{3}
\Delta_g w + \mathrm{div}(wA)= & \,  \mu w & \quad \text{in} & \quad\Omega, \\
(\nabla_gw + wA)\cdot\nu= & \, 0  &  \text{on} & \quad \partial\Omega.\\
\end{alignedat}
\right.
\]
for a constant $\mu\in \mathbb{R}$. Integrating the equation over $\Omega$ one sees that $\mu=0.$ By the maximum principle, $w$ is positive up to the boundary. Now define $a:=\log(w)$ and $C:=A+\nabla_g a$. It is easy to see that $\mathrm{div} (e^aC)=e^a\nabla_g a \cdot A +e^a\mathrm{div} C =\nabla_g w\cdot A +|\nabla_g w|^2/w+w\mathrm{div} A -|\nabla_g w|^2/w+\Delta w= 0 .$ 
\end{proof}

\section{Existence results}\label{49}

Now we can construct a supersolution for the problem \eqref{01} if $\lambda$ is sufficient small.

\begin{lemma}\label{28}
Let $w\in W_{0}^{1,2}(\Omega)$ be a weak solution of the problem
\begin{equation}\label{10}
\left\{
\begin{alignedat}{3}
-\Delta_g w + A\cdot\nabla_g w= & \, 1 & \quad \text{in} & \quad\Omega, \\
w = & \, 0  &  \text{on} & \quad \partial\Omega.\\
\end{alignedat}
\right.
\end{equation}
There exist $\beta > 0$ such that $\beta w$ is a supersolution of \eqref{01} for $\lambda$ sufficient small.
\end{lemma}

\begin{proof}
For a large $c>0$, let $\tilde{\mathcal{L}} w=-\Delta_g w + A\cdot\nabla_g w +cw$ and consider the problem
\[
\left\{
\begin{alignedat}{3}
\tilde{\mathcal{L}} w= & \, f & \quad \text{in} & \quad\Omega, \\
w = & \, 0  &  \text{on} & \quad \partial\Omega.\\
\end{alignedat}
\right.
\]
If we write $w=\tilde{\mathcal{L}}^{-1}(1+cw)=\mathcal{N}(w)$ we can use Schauder Fixed Point Theorem to find a solution $w$ of \eqref{10}. By elliptic estimates $w\in C^{1}(\overline{\Omega})$ so we can take $\beta > 0$ such that $\beta\max_{\overline{\Omega}}w < s_0.$ If $\lambda\leq \beta/f(\beta\max_{\overline{\Omega}}w)$ we have
\[
\displaystyle\int_{\Omega}\left(\nabla_g(\beta w)\nabla_g\phi + \phi A\cdot\nabla_g (\beta w)\right) \,\mathrm{d} v_g = \beta\int_{\Omega}\phi\geq\int_{\Omega}\lambda\phi f(\beta w)\,\mathrm{d} v_g,
\]
i.e., $\beta w$ is a supersolution of \eqref{01}.
\end{proof}

Let us define
$$\Lambda:=\lbrace\lambda\geq 0\text{ : }\eqref{01}\text{ has a classical solution}\rbrace.$$

We can define the extremal parameter $$\lambda^*=\sup\Lambda.$$

\begin{remark}
Using Lemma \ref{28} we can find a regular solution between $0$ and $\beta w$. With this, $\sup \Lambda>0.$
\end{remark}

\begin{lemma}
The set $\Lambda$ is a interval.
\end{lemma}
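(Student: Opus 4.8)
The plan is to show that $\Lambda$ is ``closed downward'': if $\lambda_0\in\Lambda$ and $0<\lambda<\lambda_0$, then $\lambda\in\Lambda$. Since $\Lambda\subseteq[0,\infty)$, any point lying strictly between two elements of $\Lambda$ is positive and strictly below the larger one, so this property is exactly what is needed to conclude that $\Lambda$ is an interval. Throughout I will use that each nonlinearity in \eqref{60} satisfies $f\ge 0$ and $f(0)>0$ on $[0,s_0)$.

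So fix $\lambda_0\in\Lambda$ and let $u_0$ be a classical solution of $(P_{\lambda_0})$. First I would record that, being a classical solution of $(P_{\lambda_0})$, $u_0$ satisfies $0<u_0<s_0$ in $\Omega$ and $u_0=0$ on $\partial\Omega$; by continuity on the compact set $\overline{\Omega}$ this yields $\sup_{\overline{\Omega}}u_0<s_0$ and $f(u_0)\in L^{\infty}(\Omega)$. Since the boundary is smooth, standard Schauder estimates (cf.\ Remark \ref{02}) give $u_0\in C^{2,\alpha}(\overline{\Omega})$, hence $u_0\in W^{1,2}_0(\Omega)$ and $u_0$ is in particular a weak solution of $(P_{\lambda_0})$. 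Next I would check that $u_0$ is a weak supersolution of $(P_\lambda)$: testing its equation against $\phi\in W^{1,2}_0(\Omega)$, $\phi\ge 0$, and using $f(u_0)\ge 0$ together with $\lambda<\lambda_0$,
\[
\int_\Omega\bigl(\nabla_g u_0\cdot\nabla_g\phi+\phi\,A\cdot\nabla_g u_0\bigr)\,\mathrm{d} v_g=\lambda_0\int_\Omega f(u_0)\phi\,\mathrm{d} v_g\ \ge\ \lambda\int_\Omega f(u_0)\phi\,\mathrm{d} v_g .
\]
On the other hand $\underline{u}\equiv 0$ is a weak subsolution of $(P_\lambda)$ (this is where $f(0)>0$ enters) and trivially $0\le u_0$ a.e. Applying Proposition \ref{12} to the ordered pair $0\le u_0$ then produces a weak solution $u\in W^{1,2}_0(\Omega)$ of $(P_\lambda)$ with $0\le u\le u_0$ a.e.\ in $\Omega$.

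It remains to upgrade $u$ to a \emph{positive classical} solution. Since $\sup_\Omega u\le\sup_{\overline{\Omega}}u_0<s_0$, the function $u$ is a regular solution, hence classical by Remark \ref{02}. To pass from $u\ge 0$ to $u>0$ in $\Omega$, note that $u\not\equiv 0$ (otherwise the equation would force $0=\lambda f(0)=\lambda>0$); then $u$ is a nonnegative classical solution of $-\Delta_g u+A\cdot\nabla_g u=\lambda f(u)\ge 0$ in $\Omega$ that vanishes on $\partial\Omega$, so the strong maximum principle for $-\Delta_g+A\cdot\nabla_g$ yields $u>0$ in $\Omega$. Thus $u$ solves $(P_\lambda)$ classically, i.e.\ $\lambda\in\Lambda$, and $\Lambda$ is an interval. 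I expect the only step that is not purely formal to be this last one — extracting a positive classical solution from the weak solution furnished by Proposition \ref{12} — and it is settled by combining the a priori comparison $u\le u_0<s_0$, the elliptic bootstrap of Remark \ref{02}, and the strong maximum principle.
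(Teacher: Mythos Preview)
Your proof is correct and follows essentially the same route as the paper: take a classical solution $u_0$ at $\lambda_0$, use $0$ and $u_0$ as an ordered sub/supersolution pair for $(P_\lambda)$ when $\lambda<\lambda_0$, apply Proposition~\ref{12}, and upgrade the resulting weak solution to a positive classical one via Remark~\ref{02}. Your treatment is in fact more carefully written than the paper's, since you explicitly justify both the regularity bootstrap (via $u\le u_0<s_0$) and the strict positivity (via the strong maximum principle), points the paper leaves implicit.
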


\begin{proof}
Initially, we prove that $\Lambda$ does not consist of just $\lambda=0$. Let $u$ a classical solution for problem \eqref{01} with $\lambda<\lambda^*.$ Observe that $u_0=0$ and $u$ are sub and supersolution, respectively, for the problem \eqref{01}. Using the Sub and Supersolution Method, there exist a weak solution $v\in W_{0}^{1,2}(\Omega)$ such that $u_0 \leq v<u < s_0$. By Remark \ref{02}, $v$ is a classical solution. This solution is a supersolution for ($\overline{P}_\mu$) if $\mu\in (0,\lambda)$. Again, there exist a classical solution for the problem ($\overline{P}_\mu$). Thus, $\Lambda$ is a interval.
\end{proof}

\begin{lemma}\label{23}
The interval $\Lambda$ is bounded.
\end{lemma}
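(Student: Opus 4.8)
The plan is to show that $\lambda^* \le \lambda_1$, where $\lambda_1$ is the first eigenvalue of the operator $-\Delta_g + A\cdot\nabla_g$ under zero Dirichlet boundary condition (this is exactly the upper bound announced in Theorem~\ref{22}$(ii)$). The key structural fact to exploit is that for each of the three nonlinearities in \eqref{60} the function $f$ is positive, increasing, and \emph{superlinear} near $s_0$ (i.e. $f(s)/s \to +\infty$ as $s \to s_0^-$, with the convention $s_0 = +\infty$ in the Gelfand and power cases, and $f(s) \ge f(0) = 1 > 0$ throughout). In particular there is a constant $\mu > \lambda_1$ such that $f(s) \ge \mu s$ for all $s \in [0, s_0)$; indeed since $f$ is convex with $f(0) > 0$ we have $f(s) \ge f(0) + f'(0)s$, and more importantly $f(s)/s$ is unbounded, so $f(s) \ge \mu s$ holds on all of $[0,s_0)$ once we also use $f \ge f(0) > 0$ near $s = 0$.

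First I would let $\varphi_1 > 0$ denote a first eigenfunction associated to $\lambda_1$, i.e. a positive solution of $-\Delta_g \varphi_1 + A\cdot\nabla_g\varphi_1 = \lambda_1\varphi_1$ in $\Omega$ with $\varphi_1 = 0$ on $\partial\Omega$; its existence and positivity follow from the Krein--Rutman argument already used in the proof of Lemma~\ref{48}. Now suppose, for contradiction, that $\lambda \in \Lambda$ with $\lambda > \lambda_1$, and let $u$ be a classical solution of \eqref{01} for this $\lambda$. Testing the equation for $u$ against $\varphi_1$ and the eigenvalue equation against $u$, and subtracting, the second-order and advection terms cancel (after an integration by parts that is legitimate since $u, \varphi_1 \in W_0^{1,2}(\Omega)$ and both are smooth up to the boundary by Remark~\ref{02}), leaving
\[
\lambda\int_\Omega f(u)\varphi_1 \, \mathrm{d} v_g = \lambda_1 \int_\Omega u\,\varphi_1\, \mathrm{d} v_g.
\]
Using $f(u) \ge \mu u$ with $\mu$ chosen as above (or more simply $f(u) \ge f(0) > 0$ if one only wants a crude bound, though that forces a separate Poincaré-type argument), together with $\varphi_1 > 0$ and $u \ge 0$, we get $\lambda\mu \int_\Omega u\varphi_1 \le \lambda\int_\Omega f(u)\varphi_1 = \lambda_1\int_\Omega u\varphi_1$. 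Since $u \not\equiv 0$ and $\varphi_1 > 0$, the integral $\int_\Omega u\varphi_1\,\mathrm{d} v_g$ is strictly positive, so we may cancel it to obtain $\lambda\mu \le \lambda_1$, hence $\lambda \le \lambda_1/\mu < \lambda_1$, contradicting $\lambda > \lambda_1$. Therefore $\Lambda \subseteq [0,\lambda_1]$ and in particular $\lambda^* = \sup\Lambda \le \lambda_1 < \infty$.

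The one point that needs care — and which I regard as the main obstacle — is the integration by parts with the advection term: one must justify that $\int_\Omega \varphi_1\, A\cdot\nabla_g u\, \mathrm{d} v_g$ and $\int_\Omega u\, A\cdot\nabla_g\varphi_1\,\mathrm{d} v_g$ combine correctly with the Laplacian terms so that all first- and second-order contributions cancel in the subtraction. This is immediate if one works directly from the weak formulation \eqref{57}: writing the weak form for $u$ with test function $\varphi_1$ and the weak form of the eigenvalue problem with test function $u$, the bilinear expressions $\int_\Omega(\nabla_g u\cdot\nabla_g\varphi_1 + \varphi_1 A\cdot\nabla_g u)$ and $\int_\Omega(\nabla_g\varphi_1\cdot\nabla_g u + u\, A\cdot\nabla_g\varphi_1)$ are \emph{not} equal because the operator is non-selfadjoint, so one cannot naively subtract. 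The clean fix is to test the equation for $u$ against $\varphi_1$ only, and then bound $\int_\Omega A\cdot\nabla_g u\,\varphi_1$ from above by using that $\varphi_1$ is, up to the lower-order advection contribution, a supersolution-type comparison function; alternatively, and most simply, one avoids the issue entirely by using the \emph{adjoint} eigenfunction $\varphi_1^*$ of $-\Delta_g - \mathrm{div}(A\,\cdot\,)$ with the same eigenvalue $\lambda_1$, testing the equation for $u$ against $\varphi_1^* > 0$; then the pairing is genuinely symmetric and all higher-order terms cancel, yielding $\lambda\int_\Omega f(u)\varphi_1^* = \lambda_1\int_\Omega u\varphi_1^*$ and the contradiction goes through verbatim. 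I would present the argument using $\varphi_1^*$ to keep it short and rigorous.
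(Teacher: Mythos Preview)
Your approach is correct in substance but genuinely different from the paper's. The paper argues by monotone iteration: assuming a classical solution $u$ exists for some $\lambda>\lambda_1$, it normalizes the first eigenfunction $v_1$ of $-\Delta_g+A\cdot\nabla_g$ so that $\|v_1\|_\infty<1$, uses $\lambda_1 v_1<\lambda f(u)$ and the comparison principle to get $v_1\le u$, and then builds an increasing sequence $v_n$ solving $-\Delta_g v_n+A\cdot\nabla_g v_n=(\lambda_1+\epsilon)v_{n-1}$ trapped below $u$; the weak limit is an eigenfunction for the eigenvalue $\lambda_1+\epsilon$, contradicting the isolation of $\lambda_1$. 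Your route is shorter and more classical: test the equation against the positive principal eigenfunction $\varphi_1^*$ of the \emph{adjoint} operator, obtaining $\lambda\int_\Omega f(u)\varphi_1^*=\lambda_1\int_\Omega u\varphi_1^*$, and then use $f(s)>s$ to force $\lambda<\lambda_1$. What your argument buys is brevity and a one-line algebraic contradiction; what the paper's argument buys is that it works entirely with the operator $-\Delta_g+A\cdot\nabla_g$ itself (never invoking the adjoint) and only uses the comparison principle already stated in the preliminaries.

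One correction to your write-up: the claim ``there is a constant $\mu>\lambda_1$ such that $f(s)\ge\mu s$'' is wrong as stated, since $\inf_{s>0}f(s)/s$ is a fixed number depending only on $f$ and has no reason to exceed an arbitrary $\lambda_1$. What you actually need (and implicitly use in the line $\lambda\le\lambda_1/\mu<\lambda_1$) is $\mu>1$, and this does hold for all three nonlinearities: from convexity and $f(0)=1$, $f'(0)\ge 1$ one gets $f(s)\ge 1+s>s$, whence $\inf_{s>0}f(s)/s\ge 1$, with strict inequality by a short computation in each case. With that fix, your adjoint-eigenfunction argument is complete and yields the same bound $\lambda^*\le\lambda_1$ recorded in Theorem~\ref{22}$(ii)$.
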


\begin{proof}
Suppose that exist a classical solution $u$ of $\eqref{01}$, for $\lambda$ sufficiently large. We can suppose that $\lambda > \lambda_1$, where $\lambda_1$ is the first eigenvalue associate to the operator $L=-\Delta_g + A\cdot\nabla_g$. Let $v_1$ the first eigenvalue in $\lambda_1$, i.e.,
\[
\left\{
\begin{alignedat}{3}
-\Delta_g v_1 + A\cdot\nabla_g v_1= & \, \lambda_1 v_1 & \quad \text{in} & \quad\Omega, \\
v_1 = & \, 0  &  \text{on} & \quad \partial\Omega.\\
\end{alignedat}
\right.
\]
By regularity theory, follows that $v_1\in C^{1,\alpha}(\overline{\Omega})$. By homogeneity, we can suppose $\Vert v_1 \Vert_\infty < 1.$ So $v_1$ and $u$ satisfies
\[
-\Delta_g v_1 + A\cdot\nabla_g v_1 = \lambda_1 v_1 < \lambda f(u)=-\Delta_g u + A\cdot\nabla_g u.
\]
By Comparison Principle follows that $v_1\leq u.$ Now, given $\epsilon >0$, we take $v_2$ a solution of
\[
\left\{
\begin{alignedat}{3}
-\Delta_g v_2 + A\cdot\nabla_g v_2 = & \, (\lambda_1+\epsilon) v_1 & \quad \text{in} & \quad\Omega, \\
v_2 = & \, 0  &  \text{on} & \quad \partial\Omega.\\
\end{alignedat}
\right.
\]
As above, $v_1\leq v_2\leq u$. By induction, we have solutions $v_n$ such that
\[
\left\{
\begin{alignedat}{3}
-\Delta_g v_n + A\cdot\nabla_g v_n= & \, (\lambda_1+\epsilon) v_{n-1} & \quad \text{in} & \quad\Omega, \\
v_n = & \, 0  &  \text{on} & \quad \partial\Omega,\\
\end{alignedat}
\right.
\]
with $v_1 \leq ... \leq v_{n-1} \leq v_n \leq u$ in $C^{1,\alpha}(\overline{\Omega})$. So, $v_n \rightharpoonup v$ in $W_{0}^{1,2}(\Omega)$. It follows that $v$ satisfies
\[
\left\{
\begin{alignedat}{3}
-\Delta_g v + A\cdot\nabla_g v= & \, (\lambda_1+\epsilon) v & \quad \text{in} & \quad\Omega, \\
v = & \, 0  &  \text{on} & \quad \partial\Omega.\\
\end{alignedat}
\right.
\]
This is impossible since the first eigenvalue is isolated.
\end{proof}

\begin{remark}
Clearly, $\lambda^*<+\infty$ and there are no classical solution of \eqref{01} for $\lambda > \lambda^*$.
\end{remark}

\section{Monotonicity results and estimates for minimal solutions}\label{50}

\subsection{Minimal solutions}

\begin{lemma}\label{25}
For each $\lambda < \lambda^*$ there exist a unique minimal solution $\underline{u}_\lambda$ for the problem \eqref{01}. Therefore, for all $x\in\Omega,$ the map $\lambda \rightarrow \underline{u}_\lambda (x)$ is strictly increasing.
\end{lemma}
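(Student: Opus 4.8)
The plan is to combine the sub- and super-solution machinery (Proposition 2.3) with the monotone iteration already used in its proof to produce, for each fixed $\lambda<\lambda^*$, a distinguished solution that sits below every supersolution, and then to exploit uniqueness of that distinguished solution to deduce strict monotonicity in $\lambda$. First I would fix $\lambda<\lambda^*$. By definition of $\lambda^*=\sup\Lambda$ and the fact that $\Lambda$ is an interval, there is some $\mu\in(\lambda,\lambda^*)$ for which $(P_\mu)$ has a classical solution $u_\mu$; since $f>0$ and $\lambda<\mu$, this $u_\mu$ is a (strict) supersolution of $(P_\lambda)$, while $u_0\equiv 0$ is a subsolution with $0\le u_\mu$. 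Apply the monotone iteration of Proposition 2.3 starting from $u_0=0$: this yields an increasing sequence $u_n$, each solving the linear problem \eqref{09} with right-hand side built from $u_{n-1}$, with $0\le u_{n-1}\le u_n\le u_\mu<s_0$, converging weakly in $W_0^{1,2}(\Omega)$ to a weak solution $\underline u_\lambda$ of $(P_\lambda)$ with $0\le\underline u_\lambda\le u_\mu$. By Remark 1.8 this solution is in fact classical.

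Next I would prove minimality: $\underline u_\lambda\le v$ a.e. for every supersolution $v$ of $(P_\lambda)$. The point is that the sequence $u_n$ constructed above lies below \emph{every} supersolution, not merely below the particular $u_\mu$ chosen. Indeed $u_0=0\le v$; assuming $u_{n-1}\le v$, subtract the equation \eqref{09} for $u_n$ from the (super)inequality for $v$ and use that $c>0$ was chosen large enough that $s\mapsto \lambda f(s)+cs$ is nondecreasing on the relevant range, so that the right-hand side for $u_n$ is $\le$ that for $v$; the Maximum Principle (Proposition 2.1) applied to the operator $-\Delta_g+A\cdot\nabla_g+c$ then gives $u_n\le v$. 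Passing to the limit, $\underline u_\lambda\le v$. This simultaneously gives uniqueness of the minimal solution, since any two minimal solutions dominate each other.

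Finally, for strict monotonicity: take $0<\lambda_1<\lambda_2<\lambda^*$. Since $f>0$, the minimal solution $\underline u_{\lambda_2}$ satisfies $-\Delta_g\underline u_{\lambda_2}+A\cdot\nabla_g\underline u_{\lambda_2}=\lambda_2 f(\underline u_{\lambda_2})\ge\lambda_1 f(\underline u_{\lambda_2})$, so $\underline u_{\lambda_2}$ is a supersolution of $(P_{\lambda_1})$; by minimality $\underline u_{\lambda_1}\le\underline u_{\lambda_2}$. To upgrade $\le$ to $<$ pointwise in $\Omega$, set $\omega:=\underline u_{\lambda_2}-\underline u_{\lambda_1}\ge 0$; it satisfies $-\Delta_g\omega+A\cdot\nabla_g\omega=\lambda_2 f(\underline u_{\lambda_2})-\lambda_1 f(\underline u_{\lambda_1})$, and the right-hand side is $\ge(\lambda_2-\lambda_1)f(\underline u_{\lambda_1})>0$ in $\Omega$ because $f>0$; hence $\omega$ is a nonnegative, nontrivial supersolution of $-\Delta_g\omega+A\cdot\nabla_g\omega=0$, and the strong maximum principle forces $\omega>0$ throughout $\Omega$. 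Thus $\lambda\mapsto\underline u_\lambda(x)$ is strictly increasing for each $x\in\Omega$.

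The main obstacle is the minimality argument: one must be careful that the monotone iteration can be run against an \emph{arbitrary} supersolution $v$ (of possibly low regularity), which requires that the comparison in \eqref{09} be valid in the weak sense and that the shift constant $c$ be chosen uniformly large enough to monotonize $\lambda f(s)+cs$ over the range $[0,\sup v)$ — harmless in the Gelfand and power cases on bounded ranges, and for MEMS requiring that $v$ be a regular supersolution so that $\sup v<1$. The monotonicity step is then routine given the strong maximum principle of Proposition 2.1.
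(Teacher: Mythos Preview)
Your proposal is correct and follows essentially the same route as the paper: build $\underline u_\lambda$ via the monotone iteration of Proposition~2.3 starting from $0$, observe it lies below every supersolution (hence is minimal and unique), and deduce monotonicity from the fact that $\underline u_\mu$ is a supersolution of $(P_\lambda)$ whenever $\lambda<\mu$. Your version is in fact more careful than the paper's terse proof, which asserts minimality and the strict inequality $\underline u_\lambda<\underline u_\mu$ without spelling out the inductive comparison against an arbitrary supersolution or invoking the strong maximum principle; your discussion of the shift constant $c$ and the regularity caveat for the MEMS case are valid technical points that the paper glosses over.
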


\begin{proof}
Consider the weak solution $u$ given by Proposition \ref{12}. All supersolutions $v$ of \eqref{01} satisfies $u\leq v.$ Thus $u$ is minimal. The uniqueness follows by minimality of $u$. In this way, we define $u:=\underline{u}_\lambda.$ Therefore, if $\lambda < \mu,$ we have that $\underline{u}_\mu$ is a supersolution of \eqref{01}. Thus, $\underline{u}_\lambda < \underline{u}_\mu.$
\end{proof}

Let $u$ be a semi-stable solution of \eqref{01}, and let us consider the following  eigenvalue problem involving the linearized operator $L_{u,\lambda}=-\Delta_g+A\cdot\nabla_g -\lambda f^\prime (u)$ at $u$, 
\[
\left\{
\begin{alignedat}{3}
 L_{u,\lambda} \phi  = &\,   \mu \phi  & \text{ in } & \Omega\\
 u  = & \, 0 & \quad \text{ on }  & \partial\Omega.
\end{alignedat}
\right.
\]
It is well known that there exists a smallest positive eigenvalue $\mu$, which we denote by $\mu_{1,\lambda}$, and an associated eigenfunction $\phi_{1,\lambda}>0$ in $\Omega$, and $\mu_{1,\lambda}$ is a simple eigenvalue and has the following variational characterization 
\[
\mu_{1,\lambda}=\inf \left\{ \langle L_{u,\lambda}\phi,\phi \rangle_{L^2(\Omega)} : \phi\in W_{0}^{1,2}(\Omega), \int_{\Omega}\phi^2 \, \mathrm{d} v_g=1 \right\}.
\]

\begin{lemma}\label{24}
If $0\leq \lambda<\lambda^*$, the minimal solutions are semi-stable.
\end{lemma}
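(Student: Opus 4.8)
The plan is to combine the strict monotonicity of the branch of minimal solutions (Lemma~\ref{25}) with the convexity of $f$: for each nonlinearity listed in \eqref{60} the function $f$ is $C^1$, positive, increasing and convex on $[0,s_0)$, so that $f(t)-f(s)\ge f'(s)(t-s)$ whenever $0\le s\le t<s_0$. Using this I would construct, at every $\underline{u}_\lambda$ with $0<\lambda<\lambda^*$, a positive strict supersolution of the linearized operator $L_{\underline{u}_\lambda,\lambda}=-\Delta_g+A\cdot\nabla_g-\lambda f'(\underline{u}_\lambda)$, and then deduce that its first eigenvalue $\mu_{1,\lambda}$ is positive, which yields the semi-stability inequality \eqref{bora}.

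First I would fix $0<\lambda<\lambda^*$ and choose $\mu$ with $\lambda<\mu<\lambda^*$. By Lemma~\ref{25}, $\underline{u}_\lambda<\underline{u}_\mu$ in $\Omega$, and since $\mu<\lambda^*$ these are regular, hence classical solutions (Remark~\ref{02}); set $w:=\underline{u}_\mu-\underline{u}_\lambda$, so that $w>0$ in $\Omega$ and $w=0$ on $\partial\Omega$. Subtracting the two equations and invoking convexity,
\[
-\Delta_g w+A\cdot\nabla_g w=\mu f(\underline{u}_\mu)-\lambda f(\underline{u}_\lambda)=\lambda\bigl(f(\underline{u}_\mu)-f(\underline{u}_\lambda)\bigr)+(\mu-\lambda)f(\underline{u}_\mu)\ \ge\ \lambda f'(\underline{u}_\lambda)\,w+(\mu-\lambda)f(\underline{u}_\mu),
\]
and since $\mu>\lambda$ and $f>0$ this says $L_{\underline{u}_\lambda,\lambda}w\ge(\mu-\lambda)f(\underline{u}_\mu)>0$ in $\Omega$; thus $w$ is a positive strict supersolution of $L_{\underline{u}_\lambda,\lambda}$ vanishing on $\partial\Omega$. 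The value $\lambda=0$ is handled the same way, with $\underline{u}_0\equiv 0$ and $L_{0,0}=-\Delta_g+A\cdot\nabla_g$, using $L_{0,0}\underline{u}_\mu=\mu f(\underline{u}_\mu)>0$.

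To conclude I would show that the existence of such a $w$ forces $\mu_{1,\lambda}>0$. The quickest route is to test the inequality $L_{\underline{u}_\lambda,\lambda}w>0$ against the positive principal eigenfunction $\phi^{*}_{1,\lambda}>0$ of the formal adjoint $L^{*}_{\underline{u}_\lambda,\lambda}$, which by Krein--Rutman theory is well defined and shares the principal eigenvalue $\mu_{1,\lambda}$; this gives $\mu_{1,\lambda}\int_\Omega w\,\phi^{*}_{1,\lambda}\,\mathrm{d} v_g=\int_\Omega\phi^{*}_{1,\lambda}\,L_{\underline{u}_\lambda,\lambda}w\,\mathrm{d} v_g>0$, and since $\int_\Omega w\,\phi^{*}_{1,\lambda}\,\mathrm{d} v_g>0$ we obtain $\mu_{1,\lambda}>0$. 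An equivalent, more elementary variant is to suppose $\mu_{1,\lambda}\le0$ and compare $w$ with the eigenfunction $\phi_{1,\lambda}>0$: the function $z:=w-t^{*}\phi_{1,\lambda}$, with $t^{*}:=\sup\{t>0:w\ge t\phi_{1,\lambda}\text{ in }\Omega\}$, is nonnegative, vanishes on $\partial\Omega$, touches $0$ somewhere, and satisfies $L_{\underline{u}_\lambda,\lambda}z=L_{\underline{u}_\lambda,\lambda}w-t^{*}\mu_{1,\lambda}\phi_{1,\lambda}>0$; the strong maximum principle and the Hopf boundary lemma then force $z\equiv0$, whence $L_{\underline{u}_\lambda,\lambda}w=\mu_{1,\lambda}w\le0$, contradicting the previous step. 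In all cases $\mu_{1,\lambda}>0$, hence $\int_\Omega\bigl(|\nabla_g\xi|^2+\xi A\cdot\nabla_g\xi\bigr)\,\mathrm{d} v_g\ge\lambda\int_\Omega f'(\underline{u}_\lambda)\xi^2\,\mathrm{d} v_g$ for all $\xi\in C_0^1(\Omega)$, i.e. $\underline{u}_\lambda$ is semi-stable.

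The hard part is precisely this last step. Because $A\not\equiv0$ the operator $L_{\underline{u}_\lambda,\lambda}$ is not self-adjoint, so positivity of $\mu_{1,\lambda}$ cannot be read off directly from a Rayleigh quotient; it must be extracted from the positive supersolution through the Krein--Rutman framework (a positive, simple principal eigenvalue with positive eigenfunctions for $L_{\underline{u}_\lambda,\lambda}$ and for its adjoint) together with the strong maximum principle and the Hopf boundary lemma. These tools are classical for smooth second-order elliptic operators on bounded Euclidean domains, but here they must be transported to $\overline{\Omega}\subset\mathcal{M}$ via a coordinate covering and a subordinate partition of unity, exactly as for the elliptic estimates invoked in Remark~\ref{02}.
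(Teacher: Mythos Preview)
Your argument is correct and follows a genuinely different route from the paper's. The paper argues by contradiction: assuming $\mu_{1,\lambda}<0$, it takes the positive principal eigenfunction $\psi$ of $L_{\underline{u}_\lambda,\lambda}$ and uses a second-order Taylor expansion (together with $f''\ge 0$) to show that $\psi_\epsilon:=\underline{u}_\lambda-\epsilon\psi$ is a supersolution of $(P_\lambda)$ for $\epsilon>0$ small, which contradicts the minimality of $\underline{u}_\lambda$. You instead proceed directly: from Lemma~\ref{25} and convexity you produce the explicit positive strict supersolution $w=\underline{u}_\mu-\underline{u}_\lambda$ of the linearized operator, and then invoke Krein--Rutman theory (or the equivalent sliding comparison with $\phi_{1,\lambda}$ and the strong maximum principle) to force $\mu_{1,\lambda}>0$. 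The paper's method is more self-contained---it uses only Taylor's formula and the very definition of minimality, with no adjoint spectral theory---whereas your barrier argument imports more machinery but delivers the \emph{strict} positivity $\mu_{1,\lambda}>0$ on $(0,\lambda^*)$ in one stroke, a fact the paper establishes only later in a separate proposition. Both proofs ultimately rely on the identification, asserted in the paragraph preceding the lemma, of the principal eigenvalue of the non-self-adjoint operator $L_{\underline{u}_\lambda,\lambda}$ with the infimum of the quadratic form in \eqref{bora}; you are right to flag that this link is not automatic when $A\not\equiv 0$, and it is the one place where your argument (like the paper's) leans on the surrounding framework rather than on an explicit computation.
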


\begin{proof}
Let $\underline{u}_\lambda$ minimal solution of \eqref{01}. Suppose that $\underline{u}_\lambda$ is not semi-stable i.e., the first eigenvalue $\mu_{1,\lambda}$ of operator $L_{u,\lambda}$ is negative. Consider the function $\psi_\epsilon =\underline{u}_\lambda - \epsilon \psi \in W_{0}^{1,2}(\Omega)$, where $\psi\in W_{0}^{1,2}(\Omega)$ is the first positive eigenvalue of operator $-\Delta_g + A\cdot\nabla_g-\lambda f^\prime (\underline{u}_\lambda)$. Using Taylor's formula, for $\Vert\xi\Vert_{W^{1,2}_{0}(\Omega)}$ sufficiently small we have
\[
\begin{alignedat}{2}
-\Delta_g\psi_\epsilon + A\cdot\nabla_g\psi_\epsilon-\lambda f(\psi_\epsilon) &= -\epsilon\lambda f(\psi)+\lambda f(\underline{u}_\lambda) - \epsilon\kappa\psi-\epsilon\lambda f^\prime(\underline{u}_\lambda)\psi\\
 &= -\epsilon\kappa\psi-\lambda\epsilon^2 f^{\prime\prime}(\xi)\psi^2\geq 0,
\end{alignedat}
\]
for $\epsilon$ sufficiently small, because $\kappa < 0.$ Thus $\psi_\epsilon$ is a supersolution of \eqref{01} and, by minimality of $\underline{u}_\lambda$ we have a contradiction.
\end{proof}

\begin{proof}[Proof of Theorem \ref{22}]
(1) The existence of $\lambda^*$ follows from Lemma \ref{23}. By Lemmas \ref{25} and \ref{24}, there exists $\underline{u}_\lambda$ minimal solution of \eqref{01} which is semi-stable and the function $\lambda \rightarrow \underline{u}_\lambda (x)$ is strictly increasing.\\
(2) Note that since $u_0=0$ is a subsolution of \eqref{01}, $\underline{u}_\lambda$ is non negative. In the same way, since a classical solution of \eqref{01} is also a supersolution, it follows that $\underline{u}_\lambda$ is a classical solution. The estimate is a consequence of Lemma \ref{28} and Lemma \ref{23}. \\
(3) Let $u_\mu$ a weak solution of $(P_\mu)$ with $\lambda^*<\mu$. Observe that $w=(1-\epsilon)u_\mu$ is a weak solution of $-\Delta_g w+A\cdot\nabla_g w=(1-\epsilon)\mu f(u_\mu)$, that is,
\[
\int_{\Omega}\left(\nabla_g w\cdot\nabla_g \phi+\phi A\cdot\nabla_g w\right)\,\mathrm{d} v_g=(1-\epsilon)\mu\int_{\Omega}\phi f(u_\mu)\,\mathrm{d} v_g. 
\]
An easy calculation shows that $w$ is a supersolution for $(P_{(1-\epsilon)\mu})$. Thus there exist a weak solution $v\leq w$. Since $v\leq w < u_\mu$, it follows that $v$ is a classical solution of $(P_{(1-\epsilon)\mu})$. If $\epsilon$ is sufficiently small, $\lambda^*<(1-\epsilon)\mu$. But this is a contradiction. Furthermore, since $u^*$ is a monotone limit of measurable functions, it is also measurable. \\
(4) Now, to prove that a semi-stable solution of \eqref{01} is minimal, let $u$ and $v$ a semi-stable solution and a supersolution of \eqref{01} respectively. For $\theta\in [0,1]$ and $0\leq\phi\in W_{0}^{1,2}(\Omega)$, we have
\begin{eqnarray*}
I_{\theta,\phi}:= \int_{\Omega}\left(\nabla_g(\theta u +(1-\theta)v)\cdot\nabla_g\phi + \phi A\cdot\nabla_g (\theta u +(1-\theta)v)\right)\,\mathrm{d} v_g\\
-\lambda\int_{\Omega}\phi f(\theta u +(1-\theta)v)\,\mathrm{d} v_g\geq 0,
\end{eqnarray*}
due to the convexity of function $s\rightarrow f(s).$ Since $I_{1,\phi}=0,$ the derivative of $I_{\theta,\phi}$ at $\theta=1$ is non positive, that is
\[
\int_{\Omega}\left(\nabla_g(u-v)\cdot\nabla_g\phi+\phi A\cdot\nabla_g(u-v)\right)\,\mathrm{d} v_g-\int_{\Omega}\lambda(u-v)\phi f^{\prime}(u)\,\mathrm{d} v_g\leq 0,\quad \forall\phi\geq 0.
\]
Testing $\phi=(u-v)^+$ and using that $u$ is semi-stable we get that
\[
\int_{\Omega}\left(|\nabla_g(u-v)^+|^2+(u-v)^+A\cdot\nabla_g(u-v)\right)\,\mathrm{d} v_g-\int_{\Omega}\lambda(u-v)(u-v)^+ f^\prime (u)\,\mathrm{d} v_g= 0,
\]
for all $\phi\geq 0$. Since $I_{\theta,(u-v)^+}\geq 0$ for any $\theta\in [0,1]$ and $I_{1,(u-v)^+}=\partial_\theta I_{1,(u-v)^+}=0,$ we have
\[
\partial_{\theta\theta}^{2}I_{1,(u-v)^+}=-\int_{\Omega}\lambda(u-v)^2(u-v)^+f^{\prime\prime}(u)\,\mathrm{d} v_g\geq 0.
\]
Clearly we have $(u-v)^+=0$ a.e. in $\Omega$ and therefore $\int_{\Omega}|\nabla_g(u-v)^+|^2\,\mathrm{d} v_g=0$, from which we conclude that $u\leq v$ a.e. in $\Omega$.
\end{proof}

\subsection{Determining the critical dimension}
For the MEMS case, the next lemma is the principal estimate, which was already behind the proof of the regularity of semi-stable solutions in dimensions lower than 7.

When $A=-\nabla_g a+C$ the problem $\eqref{01}$ can be rewritten as
\[
-\mathrm{div}_g(e^a\nabla_g u)+e^aC\cdot\nabla_gu=\frac{\lambda e^a}{(1-u)^2}.
\]
Thus the semi-stability and weak solution conditions becomes, respectively
\begin{equation}\label{46}
\int_{\Omega}\left( e^a|\nabla_g \eta|^2\ + e^a\eta C\cdot\nabla_g\eta\right) \, \mathrm{d} v_g  \geq \int_{\Omega}\frac{2\lambda e^a}{(1-u)^3}\eta^2\,\mathrm{d} v_g
\end{equation}
and
\begin{equation}\label{47}
\int_{\Omega}\left( e^a\nabla_g u\cdot\nabla_g \phi+ e^a\phi C\cdot\nabla_gu\right) \,\mathrm{d} v_g=\int_{\Omega}\frac{\lambda e^a\phi}{(1-u)^2} \,\mathrm{d} v_g.
\end{equation}
\begin{lemma}\label{03}
If $u$ is a semi-stable solution of \eqref{01} with $0<\lambda<\lambda^*$, $f(u)=1/(1-u)^2$ and $0<t<2+\sqrt{6}$, holds the following estimate
\[
\Vert e^{2a/(2t+3)}(1-u)^{-2} \Vert_{L^{t+3/2}}\leq \left[ \frac{4(2t+1)}{2+4t-t^2} \right]^{2/t}C_1\Vert\Omega\Vert^{2/(2t+3)}.
\]
\end{lemma}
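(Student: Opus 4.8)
The plan is to follow the classical Nedev--type bootstrap, adapted to the weighted operator $-\mathrm{div}_g(e^a\nabla_g\cdot)+e^aC\cdot\nabla_g$, testing the semi-stability inequality \eqref{46} with a power of $(1-u)$. Concretely, for a parameter $t>0$ I would choose the test function $\eta=(1-u)^{-t}-1\in W_0^{1,2}(\Omega)$ (legitimate since $u$ is a regular semi-stable solution, $\sup_\Omega u<1$), so that $\nabla_g\eta=t(1-u)^{-t-1}\nabla_g u$. Plugging this into \eqref{46} gives, on the left, a term $t^2\int_\Omega e^a(1-u)^{-2t-2}|\nabla_g u|^2\,\mathrm{d} v_g$ together with an advection cross term $\int_\Omega e^a\eta C\cdot\nabla_g\eta$, and on the right $2\lambda\int_\Omega e^a(1-u)^{-3}\big((1-u)^{-t}-1\big)^2\,\mathrm{d} v_g$.

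The second step is to extract from the weak equation \eqref{47} an identity for $\int_\Omega e^a(1-u)^{-2t-2}|\nabla_g u|^2\,\mathrm{d} v_g$: test \eqref{47} with $\phi=(1-u)^{-2t-1}-1$, whose gradient is $(2t+1)(1-u)^{-2t-2}\nabla_g u$, obtaining
\[
(2t+1)\int_\Omega e^a(1-u)^{-2t-2}|\nabla_g u|^2\,\mathrm{d} v_g + \int_\Omega e^a\phi C\cdot\nabla_g u\,\mathrm{d} v_g = \lambda\int_\Omega e^a(1-u)^{-2}\big((1-u)^{-2t-1}-1\big)\,\mathrm{d} v_g.
\]
Substituting this back into the semi-stability inequality eliminates the gradient term and leaves an inequality purely in terms of weighted integrals of negative powers of $(1-u)$. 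Expanding the squares on the right-hand side of \eqref{46} and keeping only the leading power $(1-u)^{-2t-3}$ (the lower-order powers $(1-u)^{-t-3}$, $(1-u)^{-3}$ are absorbed via Young's inequality, at the cost of the $\Vert\Omega\Vert$ factor and a constant), one arrives at
\[
\Big(\tfrac{2t^2}{2t+1}\ \text{vs.}\ 2\Big)\text{-type comparison}\ \Longrightarrow\ \Big(1-\tfrac{t^2}{2t+1}\cdot(\text{something})\Big)\int_\Omega e^{?a}(1-u)^{-2t-3}\,\mathrm{d} v_g \le (\text{const})\,\|\Omega\|,
\]
and a short computation identifies the coefficient: the bracket works out to be proportional to $2+4t-t^2$, which is positive exactly when $0<t<2+\sqrt{6}$, explaining the stated range. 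Collecting the exponents, $\int_\Omega e^{2a/(2t+3)\cdot(t+3/2)}(1-u)^{-(2t+3)}\,\mathrm{d} v_g$ is the quantity being bounded, i.e. $\Vert e^{2a/(2t+3)}(1-u)^{-2}\Vert_{L^{t+3/2}}^{t+3/2}$, and raising to the power $1/(t+3/2)=2/(2t+3)$ gives the form in the statement, with $C_1$ collecting $\inf e^a$, $\sup e^a$, $\|C\|_\infty$ and the Poincaré/Young constants, and the $[4(2t+1)/(2+4t-t^2)]^{2/t}$ factor coming from the algebraic coefficient above raised to the appropriate power.

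The handling of the advection cross terms is the main obstacle: the terms $\int_\Omega e^a\eta C\cdot\nabla_g\eta$ and $\int_\Omega e^a\phi C\cdot\nabla_g u$ are not obviously controlled, since $C$ is only bounded, not small. The key point is that $\mathrm{div}_g(e^aC)=0$ (Lemma \ref{48}), so integrating by parts $\int_\Omega e^a\,G(u)\,C\cdot\nabla_g u\,\mathrm{d} v_g=\int_\Omega e^a C\cdot\nabla_g\big(\!\int^u G\big)\,\mathrm{d} v_g = -\int_\Omega \mathrm{div}_g(e^aC)\big(\!\int^u G\big)\,\mathrm{d} v_g=0$ for any primitive, so every advection term that is of "pure divergence" form in $u$ vanishes identically; the surviving pieces involve $\eta$ which also vanishes on $\partial\Omega$, so the same trick applies. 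Once these terms are killed, the remaining argument is the elementary Cauchy--Schwarz/Young bookkeeping sketched above. I would present the divergence-free cancellation as a preliminary remark, then carry out the two test-function substitutions, then do the Young's-inequality absorption, and finally read off the constant and the exponent range $0<t<2+\sqrt6$.
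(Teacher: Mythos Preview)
Your approach is essentially the paper's: same test functions $\eta=(1-u)^{-t}-1$ in \eqref{46} and $\phi=(1-u)^{-2t-1}-1$ in \eqref{47}, and the same use of $\mathrm{div}_g(e^aC)=0$ to make both advection cross terms vanish identically. Two details to correct. First, the lower-order term $(1-u)^{-t-3}$ is not handled by Young's inequality but by H\"older with exponents $\tfrac{2t+3}{t+3}$ and $\tfrac{2t+3}{t}$, which is what produces the exact factor $\Vert\Omega\Vert^{t/(2t+3)}$ and the clean constant $\big[\tfrac{4(2t+1)}{2+4t-t^2}\big]^{2/t}$; the remaining $(1-u)^{-3}$ and $(1-u)^{-2}$ terms are simply dropped by sign. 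Second, since the advection terms vanish \emph{exactly}, no $\Vert C\Vert_\infty$ and no Poincar\'e constant enter: the paper's $C_1$ is just $\big[\sup_{\overline\Omega}e^{at/(t+3)}\big]^{(t+3)/(2t+3)}$, arising from pulling a power of $e^a$ out of the H\"older step.
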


\begin{proof}
Let $0<t<2+\sqrt{6}$ and $u$ semi-stable solution of \eqref{01}. Taking $\eta:=(1-u)^{-t}-1$ and using the Hodge-Helmholtz decomposition (see Lemma \ref{48}) we have $\int_{\Omega}e^a\eta C\cdot\nabla_g\eta \, \mathrm{d} v_g=0$. Thus, testing $\eta$ in the semistability condition \eqref{46}, we obtain
\[
\begin{alignedat}{2}
0 &\leq \displaystyle\int_{\Omega}e^a\lbrace t^2(1-u)^{-2t-2}|\nabla_g u|^{2}-2\lambda(1-u)^{-3}\left[ (1-u)^{-t}-1 \right]^2\rbrace \mathrm{d} v_g\\
&=\int_{\Omega}e^a\lbrace t^2(1-u)^{-2t-2}| \nabla_g u|^{2}-2\lambda(1-u)^{-2t-3}+4\lambda(1-u)^{-t-3}-2\lambda(1-u)^{-3}\rbrace \mathrm{d} v_g,\\
\end{alignedat}
\]
which implies
\begin{equation}\label{15}
-\int_{\Omega}e^a(1-u)^{-2t-2}|\nabla_g u|^{2}\,\mathrm{d} v_g \leq -\frac{2\lambda}{t^2}\int_{\Omega}e^a(1-u)^{-2t-3}\,\mathrm{d} v_g+\frac{4\lambda}{t^2}\int_{\Omega}e^a(1-u)^{-t-3}\,\mathrm{d} v_g.
\end{equation}
Testing $\phi:=(1-u)^{-2t-1}-1$ in the weak solution condition \eqref{47}, we obtain
\begin{equation}\label{16}
\begin{alignedat}{2}
\int_{\Omega}e^a|\nabla_g u|^{2}(2t+1)(1-u)^{-2t-2}\,\mathrm{d} v_g &= \int_{\Omega}\left\{\lambda e^a(1-u)^{-2t-3}-\lambda e^a(1-u)^{-2}\right\}\,\mathrm{d} v_g \\
&\leq\int_{\Omega}\lambda e^a(1-u)^{-2t-3}\,\mathrm{d} v_g,
\end{alignedat}
\end{equation}
because with this choice of $\phi$ we can check that $\int_{\Omega}e^a\phi\, C\cdot\nabla_gu \,\mathrm{d} v_g=0$. Using \eqref{16} and \eqref{15} we have
\[
-\frac{1}{2t+1}\int_{\Omega}e^a(1-u)^{-2t-3}\,\mathrm{d} v_g\leq -\frac{2}{t^2}\int_{\Omega}e^a(1-u)^{-2t-3}\,\mathrm{d} v_g+\frac{4}{t^2}\int_{\Omega}e^a(1-u)^{-t-3}\,\mathrm{d} v_g
\]
and follows that
\[
\left( \frac{2}{t^2}-\frac{1}{2t+1} \right)\int_{\Omega}e^a(1-u)^{-2t-3}\,\mathrm{d} v_g \leq \frac{4}{t^2}\int_{\Omega}e^a(1-u)^{-t-3}\,\mathrm{d} v_g.
\]
Using H\"{o}lder inequality with conjugate exponents $(2t+3)/(t+3)$ and $(2t+3)/t$, we have
\[
\begin{alignedat}{2}
\left( \frac{2}{t^2}-\frac{1}{2t+1} \right)\int_{\Omega}&e^a(1-u)^{-2t-3}\,\mathrm{d} v_g
\\ & \leq \frac{4}{t^2}\Vert\Omega\Vert^{t/(2t+3)}\left[\int_{\Omega}e^{a(2t+3)/(t+3)}(1-u)^{-2t-3}\,\mathrm{d} v_g\right]^{(t+3)/(2t+3)}\\
& \leq \frac{4C_1}{t^2}\Vert\Omega\Vert^{t/(2t+3)} \left[\int_{\Omega}e^a(1-u)^{-2t-3}\,\mathrm{d} v_g\right]^{(t+3)/(2t+3)},
\end{alignedat}
\]
where $C_1=\displaystyle\left[\sup_{\overline{\Omega}}e^{at/(t+3)}\right]^{(t+3)/(2t+3)}$. Thus,
\[
\left(\frac{2}{t^2}-\frac{1}{2t+1}\right)\left[\int_{\Omega}e^a(1-u)^{-2t-3}\,\mathrm{d} v_g\right]^{t/(2t+3)}\leq \frac{4C_1}{t^2}\Vert\Omega\Vert^{t/(2t+3)}
\]
and therefore
\[
\parallel e^{2a/(2t+3)}(1-u)^{-2}\parallel_{L^{t+3/2}}\leq \left[\frac{4(2t+1)}{2+4t-t^2}\right]^{2/t}C_1\Vert\Omega\Vert^{2/(2t+3)},
\]
this is the desired estimate.
\end{proof}

\begin{remark}
In the above estimate we used that $2(2t+1)>t^2$ which is an immediately consequence of our assumption $0<t<2+\sqrt{6}$.
\end{remark}

\begin{remark}
By the above estimate, $e^a(1-\underline{u}_\lambda)^{-2}$ is bounded uniformly in $\lambda$ over $L^p(\Omega)$ for all $p<p_0:=7/2+\sqrt{6}.$ By elliptic estimates, $\underline{u}_\lambda$ is uniformly bounded in $W_{0}^{1,p}(\Omega)$. Thus $u^*$ is a weak solution of $(P_{\lambda^*})$. If we take the limit $\lambda\nearrow\lambda^*$, we obtain the same $L^{p}$ estimate to extremal solution $u^*$.
\end{remark}
\begin{proposition}\label{51}
If $1\leq N \leq 7$ then $u^*$ is a classical solution of $(P_{\lambda^*})$.
\end{proposition}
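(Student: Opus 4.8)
The plan is to reduce the statement to the assertion that $u^*$ is a \emph{regular} solution, i.e. $\sup_{\Omega}u^*<1$: once this is known, Remark~\ref{02} immediately upgrades $u^*$ to a classical ($C^{2,\alpha}(\overline{\Omega})\cap C(\overline{\Omega})$) solution. First I would record what is already available from the estimate preceding the statement: since the weight $e^{a}=w$ is continuous and strictly positive on $\overline{\Omega}$ (hence bounded away from $0$ and $\infty$), that estimate gives $f(u^*)=(1-u^*)^{-2}\in L^{p}(\Omega)$ for every $p<p_0:=7/2+\sqrt{6}$. Applying the interior and boundary $L^{p}$ Calderón--Zygmund estimates for $-\Delta_g+A\cdot\nabla_g$ chart by chart together with a partition of unity — exactly the localization used in Remark~\ref{02} — one then obtains $u^*\in W^{2,p}(\Omega)$ for all such $p$.

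Next I would bring in the dimensional restriction. An elementary computation shows $\max\{1,3N/4\}<p_0$ precisely for $1\le N\le 7$ (indeed $3N/4\le 21/4<7/2+\sqrt6$). So, fixing $N$ in this range, I would choose an exponent $p$ with $\max\{1,3N/4\}<p<p_0$. Since $p>N/2$, the Sobolev embedding gives $u^*\in W^{2,p}(\Omega)\hookrightarrow C^{0,\gamma}(\overline{\Omega})$ with $\gamma:=\min\{1,\,2-N/p\}\in(0,1]$ (the borderline $p=N$, which can only occur for small $N$, is avoided by a harmless perturbation of $p$); in particular $u^*=0$ on $\partial\Omega$, and since $u^*<1$ a.e. we get $u^*\le 1$ everywhere on $\overline{\Omega}$. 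The purpose of the choice $p>3N/4$ is that it forces $2p\gamma>N$: if $p\le N$ then $2p\gamma=4p-2N>N$, while if $p>N$ then $2p\gamma=2p>2N\ge N$.

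Then I would argue by contradiction. If $\sup_{\Omega}u^*=1$, compactness of $\overline{\Omega}$ and continuity of $u^*$ produce a point $x_0\in\Omega$ (interior, since $u^*$ vanishes on $\partial\Omega$) with $u^*(x_0)=1$. The Hölder bound gives $1-u^*(x)=u^*(x_0)-u^*(x)\le [u^*]_{C^{0,\gamma}}\,\mathrm{dist}(x,x_0)^{\gamma}$ for $x$ near $x_0$, hence $f(u^*)(x)=(1-u^*(x))^{-2}\ge c\,\mathrm{dist}(x,x_0)^{-2\gamma}$ there. Passing to geodesic normal coordinates centred at $x_0$ (in which $\mathrm{d}v_g$ is comparable to Lebesgue measure), for small $r$ one gets
\[
\int_{\Omega} f(u^*)^{p}\,\mathrm{d}v_g \;\ge\; c^{p}\int_{B_r(x_0)}\mathrm{dist}(x,x_0)^{-2p\gamma}\,\mathrm{d}v_g \;=\;+\infty,
\]
since $2p\gamma>N$. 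This contradicts $f(u^*)\in L^{p}(\Omega)$. Therefore $u^*<1$ on the compact set $\overline{\Omega}$, so $\sup_{\Omega}u^*<1$; that is, $u^*$ is regular, and Remark~\ref{02} concludes the proof.

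The hard part is not the bootstrap itself but the step from an $L^{p}$ bound on $f(u^*)$ with $p>N/2$ — which by itself only yields $u^*\in L^{\infty}$ — to the strict separation $\sup_{\Omega}u^*<1$ demanded by the singular nonlinearity; this is exactly what the Hölder-continuity estimate together with the non-integrability of $\mathrm{dist}(\cdot,x_0)^{-2p\gamma}$ is designed to supply, and it is also the place where the precise arithmetic (the admissible $p$ lies in $(3N/4,p_0)$) pins the dimension range down to $N\le 7$. A secondary, routine technicality will be to make sure the $L^{p}$ elliptic theory is correctly invoked for $-\Delta_g+A\cdot\nabla_g$ on the manifold, which is handled by localizing to coordinate charts as in Remark~\ref{02}.
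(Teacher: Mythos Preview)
Your argument is correct and matches the paper's proof essentially step for step: both use the $L^{p}$ bound on $(1-u^*)^{-2}$ from Lemma~\ref{03}, elliptic regularity to get $u^*\in W^{2,p}$, Sobolev embedding into a H\"older class, and then the non-integrability of $\mathrm{dist}(\cdot,x_0)^{-2p\gamma}$ to rule out a touching point $u^*(x_0)=1$. The only cosmetic difference is that the paper fixes the single exponent $p=3N/4$ (which yields $\gamma=2/3$ and the borderline power $2p\gamma=N$), whereas you work with any $p\in(3N/4,p_0)$; your choice makes the divergence strict rather than logarithmic, but the mechanism is identical.
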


\begin{proof}
Note that $e^a(1-u^*)^{-2}\in L^{3N/4}(\Omega)$. By elliptic regularity we have $u^*\in W_{0}^{2,3N/4}(\Omega)$ and by Sobolev immersion $u^*\in C^{0,2/3}(\overline{\Omega}).$ If we suppose that $\Vert u^*\Vert_\infty =1$, there exist a element $x_0\in\Omega$ such that $u^*(x_0)=1$. Since $| 1-u^*(x)|\leq C \mathrm{dist}(x,x_0)^{2/3}$ we have,
\[
\frac{e^{a/2}}{1-u^*(x)}\geq\frac{Ce^{a/2}}{\mathrm{dist}(x,x_0)^{2/3}},
\]
and hence
\[
\infty > \int_{\Omega}\frac{e^{3Na/4}}{((1-u^*)^2)^{3N/4}}\,\mathrm{d} v_g\geq C\displaystyle\inf_{x\in\overline{\Omega}}\lbrace e^{a/2}\rbrace \int_{\Omega}\frac{1}{\mathrm{dist}(x,x_0)^N}\,\mathrm{d} v_g=\infty.
\]
This is a contradiction. Thus $\Vert u^* \Vert_{\infty}<1$. This implies $e^a(1-u^*)^{-2}\in L^\infty(\Omega)$ and $u^*$ is a classical solution of $(P_{\lambda^*})$.
\end{proof}

With a slight variation of the above arguments, the same approach works on the Gelfand and Power-type cases.

\begin{lemma}\label{39}
If $u$ is a semi-stable solution of \eqref{01} with $0<\lambda<\lambda^*$, $f(u)=e^u$ and $0<t<2$, holds the following estimate
\[
\parallel e^{1/(2t+1)a+u}\parallel_{L^{2t+1}}\leq \left[\frac{2t}{2t-t^2}\right]^{1/t}C_1\Vert\Omega\Vert^{1/(2t+1)}.
\]
\end{lemma}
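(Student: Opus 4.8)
The plan is to follow the template of Lemma~\ref{03}, replacing the MEMS nonlinearity by $f(u)=e^{u}$, so that $f'(u)=e^{u}$. First I would apply the Hodge--Helmholtz decomposition of Lemma~\ref{48} and write $A=-\nabla_g a+C$ with $\mathrm{div}(e^{a}C)=0$; then \eqref{01} takes the form $-\mathrm{div}_g(e^{a}\nabla_g u)+e^{a}C\cdot\nabla_g u=\lambda e^{a}e^{u}$, and the semistability inequality \eqref{bora} and the weak formulation \eqref{57} turn into the exact analogues of \eqref{46} and \eqref{47},
\[
\int_{\Omega}\bigl(e^{a}|\nabla_g\eta|^{2}+e^{a}\eta\,C\cdot\nabla_g\eta\bigr)\,\mathrm{d} v_g\geq\int_{\Omega}\lambda e^{a}e^{u}\eta^{2}\,\mathrm{d} v_g,\qquad\int_{\Omega}\bigl(e^{a}\nabla_g u\cdot\nabla_g\phi+e^{a}\phi\,C\cdot\nabla_g u\bigr)\,\mathrm{d} v_g=\int_{\Omega}\lambda e^{a}e^{u}\phi\,\mathrm{d} v_g.
\]
Since $u$ is a classical semi-stable solution with $\lambda<\lambda^{*}$ it is bounded, so $e^{tu}-1$ and $e^{2tu}-1$ lie in $W_0^{1,2}(\Omega)$ and are legitimate test functions.

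Next I would insert $\eta:=e^{tu}-1$ into the semistability inequality. The advection contribution vanishes because $e^{a}\eta\,C\cdot\nabla_g\eta=\tfrac12 e^{a}C\cdot\nabla_g(\eta^{2})$, $\eta^{2}$ vanishes on $\partial\Omega$, and $\int_{\Omega}e^{a}C\cdot\nabla_g(\eta^{2})\,\mathrm{d} v_g=-\int_{\Omega}\eta^{2}\,\mathrm{div}(e^{a}C)\,\mathrm{d} v_g=0$ by Lemma~\ref{48}. Using $|\nabla_g\eta|^{2}=t^{2}e^{2tu}|\nabla_g u|^{2}$ and $e^{u}\eta^{2}=e^{(2t+1)u}-2e^{(t+1)u}+e^{u}$ and discarding the nonnegative term $\lambda\int_{\Omega}e^{a}e^{u}\,\mathrm{d} v_g$, one obtains
\[
-\int_{\Omega}e^{a}e^{2tu}|\nabla_g u|^{2}\,\mathrm{d} v_g\leq-\frac{\lambda}{t^{2}}\int_{\Omega}e^{a}e^{(2t+1)u}\,\mathrm{d} v_g+\frac{2\lambda}{t^{2}}\int_{\Omega}e^{a}e^{(t+1)u}\,\mathrm{d} v_g.
\]

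Then I would test the weak formulation with $\phi:=e^{2tu}-1$; here too the advection term drops, since $\phi\,\nabla_g u=\nabla_g\Psi(u)$ with $\Psi(s)=\tfrac1{2t}e^{2ts}-s$, and $\Psi(u)-\Psi(0)$ vanishes on $\partial\Omega$, so $\int_{\Omega}e^{a}\phi\,C\cdot\nabla_g u\,\mathrm{d} v_g=0$ again by Lemma~\ref{48}. This gives $2t\int_{\Omega}e^{a}e^{2tu}|\nabla_g u|^{2}\,\mathrm{d} v_g=\lambda\int_{\Omega}e^{a}(e^{(2t+1)u}-e^{u})\,\mathrm{d} v_g\leq\lambda\int_{\Omega}e^{a}e^{(2t+1)u}\,\mathrm{d} v_g$, that is, a lower bound for $-\int_{\Omega}e^{a}e^{2tu}|\nabla_g u|^{2}\,\mathrm{d} v_g$. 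Combining it with the previous display, the gradient integral and $\lambda$ cancel and I am left with $\bigl(\tfrac1{t^{2}}-\tfrac1{2t}\bigr)\int_{\Omega}e^{a}e^{(2t+1)u}\,\mathrm{d} v_g\leq\tfrac2{t^{2}}\int_{\Omega}e^{a}e^{(t+1)u}\,\mathrm{d} v_g$; since $\tfrac1{t^{2}}-\tfrac1{2t}=\tfrac{2-t}{2t^{2}}>0$ precisely for $0<t<2$, this rearranges into $\int_{\Omega}e^{a}e^{(2t+1)u}\,\mathrm{d} v_g\leq\tfrac{4}{2-t}\int_{\Omega}e^{a}e^{(t+1)u}\,\mathrm{d} v_g$.

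Finally I would apply H\"older's inequality on the right-hand side with conjugate exponents $\tfrac{2t+1}{t+1}$ and $\tfrac{2t+1}{t}$, writing $e^{a}e^{(t+1)u}=\bigl(e^{a(2t+1)/(t+1)}e^{(2t+1)u}\bigr)^{(t+1)/(2t+1)}\cdot 1$ and then bounding $e^{a(2t+1)/(t+1)}=e^{a}e^{at/(t+1)}\leq e^{a}\sup_{\overline{\Omega}}e^{at/(t+1)}$, which produces the constant $C_1:=\bigl[\sup_{\overline{\Omega}}e^{at/(t+1)}\bigr]^{(t+1)/(2t+1)}$; absorbing the factor $\bigl(\int_{\Omega}e^{a}e^{(2t+1)u}\,\mathrm{d} v_g\bigr)^{(t+1)/(2t+1)}$ to the left and raising to the power $1/t$ then yields the asserted estimate for $\Vert e^{a/(2t+1)+u}\Vert_{L^{2t+1}}=\bigl(\int_{\Omega}e^{a}e^{(2t+1)u}\,\mathrm{d} v_g\bigr)^{1/(2t+1)}$. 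As in Lemma~\ref{03}, the only genuinely delicate point is the choice of the two test functions so that the advection terms are annihilated --- which hinges on $\mathrm{div}(e^{a}C)=0$ together with the observation that $\eta\,\nabla_g\eta$ and $\phi\,\nabla_g u$ are gradients of functions of $u$ --- while the remainder is the routine, if constant-heavy, bookkeeping of the H\"older step.
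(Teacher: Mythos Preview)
Your proposal is correct and follows essentially the same route as the paper: the same test functions $\eta=e^{tu}-1$ and $\phi=e^{2tu}-1$, the same use of the Hodge--Helmholtz decomposition to kill the advection terms, the same combination of the semistability and weak-solution inequalities, and the same H\"older step with exponents $\tfrac{2t+1}{t+1}$ and $\tfrac{2t+1}{t}$. Your justification for the vanishing of $\int_\Omega e^{a}\phi\,C\cdot\nabla_g u\,\mathrm{d} v_g$ via the primitive $\Psi(u)$ is in fact more transparent than the paper's terse ``because $\Omega$ is simply connected''.
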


\begin{proof}
Let $0<t<2$ and $u$ semi-stable solution of \eqref{01}. Define $\eta:=e^{tu}-1$ and $\phi:=e^{2tu}-1$. Testing $\eta$ in the semistability condition we have
\[
0 \leq \int_{\Omega}t^2e^{a+2tu}|\nabla_g u|^{2}-\lambda e^{a+(2t+1)u}+2\lambda e^{a+(t+1)u}\,\mathrm{d} v_g,
\]
because with this choice of $\eta$ we have $\int_{\Omega}e^a\eta C\nabla_g\eta \, \mathrm{d} v_g=0.$ It follows that
\begin{equation}\label{40}
-\int_{\Omega}e^{a+2tu}|\nabla_g u|^{2}\,\mathrm{d} v_g \leq -\frac{\lambda}{t^2}\int_{\Omega} e^{a+(2t+1)u}\, \mathrm{d} v_g +\frac{2\lambda}{t^2}\int_{\Omega} e^{a+(t+1)u}\,\mathrm{d} v_g.
\end{equation}
Testing $\phi$ in the weak solution condition we obtain
\begin{equation}\label{41}
2t\int_{\Omega}e^{a+2tu}|\nabla_g u|^{2}\,\mathrm{d} v_g \leq\lambda\int_{\Omega}e^{a+(2t+1)u}\,\mathrm{d} v_g,
\end{equation}
because with this choice of $\phi$ we can check that $\int_{\Omega}e^a\phi C\cdot\nabla_gu \,\mathrm{d} v_g=0,$ because $\Omega$ is simply connected. Using \eqref{40} and \eqref{41} we have
\[
\left( \frac{1}{t^2}-\frac{1}{2t} \right)\int_{\Omega}e^{a+(2t+1)u}\,\mathrm{d} v_g \leq \frac{2}{t^2}\int_{\Omega} e^{a+(t+1)u}\,\mathrm{d} v_g.
\]
Using H\"{o}lder inequality with conjugate exponents $(2t+1)/(t+1)$ and $(2t+1)/t$, we have
\[
\left( \frac{1}{t^2}-\frac{1}{2t} \right)\int_{\Omega}e^{a+(2t+1)u}\,\mathrm{d} v_g \leq \frac{2C_1}{t^2} \left[\int_{\Omega}e^{a+(2t+1)u}\right]^{(t+1)/(2t+1)}\Vert\Omega\Vert^{t/(2t+1)}\,\mathrm{d} v_g,
\]
where $C_1=\displaystyle\left[\sup_{\overline{\Omega}}e^{t/(t+1)a}\right]^{(t+1)/(2t+1)}$. Therefore
\[
\parallel e^{1/(2t+1)a+u}\parallel_{L^{2t+1}}\leq \left[\frac{2t}{2t-t^2}\right]^{1/t}C_1\Vert\Omega\Vert^{1/(2t+1)}.
\]
\end{proof}

\begin{remark}
In the above estimate we used that $1/t^2-1/(2t)> 0$ which is an immediately consequence of our assumption $0<t<2$.
\end{remark}

\begin{remark}
The above estimate said that $e^{a+u}$ is bounded uniformly in $\lambda$ over $L^p(\Omega)$ for all $p<p_0:=4+1=5\, .$ By elliptic estimates, $\underline{u}_\lambda$ is uniformly bounded in $W_{0}^{1,p}(\Omega)$. Thus $u^*$ is a weak solution of $(P_{\lambda^*})$. Taking the limit in $\lambda$, we obtain the same $L^{p}$ estimate above to extremal solution $u^*$.
\end{remark}

\begin{proposition}\label{54}
If $1\leq N \leq 9$ then $u^*$ is a classical solution of $(P_{\lambda^*})$.
\end{proposition}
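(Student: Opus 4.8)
The plan is to argue as in the proof of Proposition~\ref{51}, noting that the Gelfand case is in fact simpler: since here $s_0=+\infty$, a regular solution is just a bounded one, and by Remark~\ref{02} a bounded weak solution is classical, so no blow-up point needs to be excluded. The starting point is Lemma~\ref{39}: letting $t\nearrow 2$ there, together with the remark following it, we obtain that $e^{a+u^*}\in L^p(\Omega)$ for every $p<p_0=5$, uniformly in $\lambda$, and that $u^*\in W_0^{1,2}(\Omega)$ is a weak solution of $(P_{\lambda^*})$.

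Next I would run an elliptic bootstrap. Covering $\mathcal{M}$ by coordinate neighbourhoods and using a subordinate partition of unity as in Remark~\ref{02}, rewrite the equation as $-\Delta_g u^*=\lambda^* e^{u^*}-A\cdot\nabla_g u^*$, so that local Calder\'{o}n--Zygmund estimates apply. Initially $\nabla_g u^*\in L^2(\Omega)$ and $e^{u^*}\in L^p(\Omega)$ for all $p<5$, hence $-\Delta_g u^*\in L^{\min(2,p)}(\Omega)$ and $u^*\in W_0^{2,q}(\Omega)$ for the corresponding $q$; the Sobolev embedding then improves the integrability of $\nabla_g u^*$, which feeds back into the advection term $A\cdot\nabla_g u^*$. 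Iterating finitely many times, the integrability exponent of $-\Delta_g u^*$ is pushed up; this is the only place the hypothesis $N\le 9$ enters, since $N/2<5=p_0$ guarantees that one can reach some $p$ with $N/2<p<5$ and $u^*\in W_0^{2,p}(\Omega)$.

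With $p>N/2$ the Sobolev embedding $W^{2,p}(\Omega)\hookrightarrow C^{0,\alpha}(\overline{\Omega})$, $\alpha=2-N/p>0$, gives $u^*\in C^{0,\alpha}(\overline{\Omega})$; in particular $\sup_\Omega u^*<+\infty=s_0$, so $u^*$ is a regular solution of $(P_{\lambda^*})$. Then $e^{u^*}\in L^\infty(\Omega)$, and Schauder estimates (again via the partition of unity, as in Remark~\ref{02}) yield $u^*\in C^{2,\alpha}(\overline{\Omega})$, i.e.\ $u^*$ is a classical solution of $(P_{\lambda^*})$.

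The main technical point is the bootstrap through the first-order term $A\cdot\nabla_g u^*$, which only starts out in $L^2(\Omega)$: one has to check that the induced sequence of Sobolev exponents $q\mapsto Nq/(N-q)$ genuinely climbs past $N/2$ before the available source integrability $p<5$ becomes the binding constraint. Since $\max_{1\le N\le 9}N/2=9/2<5$ there is always room, and a routine bookkeeping of the exponents closes the argument; this is also why the bound $N\le 9$ (equivalently $N<N^*=10$, cf.\ Theorem~\ref{13}) is sharp for this method.
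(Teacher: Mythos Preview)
Your proof is correct and follows essentially the same route as the paper: Lemma~\ref{39} gives $e^{a+u^*}\in L^p(\Omega)$ for all $p<5$, elliptic regularity yields $u^*\in W_{0}^{2,p}(\Omega)$, and for $N\le 9$ the Sobolev embedding into $C^{0,\alpha}(\overline{\Omega})$ shows $u^*$ is regular, hence classical. The paper's argument is terser because it applies $L^p$ elliptic theory directly to the full operator $-\Delta_g+A\cdot\nabla_g$ (which has smooth, bounded first-order coefficients), thereby bypassing the explicit bootstrap on the advection term that you run; your bootstrap is valid but unnecessary.
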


\begin{proof}
Note that $e^{a+u}\in L^{p}(\Omega)$ with $p<5$. By elliptic regularity we have $u^*\in W_{0}^{2,p}(\Omega)$ and by Sobolev immersion $u^*\in C^{0,\alpha}(\overline{\Omega})$ if $N<10$. Thus $u^*$ is a classical solution of $(P_{\lambda^*})$.
\end{proof}

\begin{lemma}\label{26}
If $u$ is a semi-stable solution of \eqref{01} with $0<\lambda<\lambda^*$, $f(u)=(1+u)^m$, $b>0$ and $\frac{m}{b}-\frac{\sqrt{m(m-1)}}{b}<t<\frac{m}{b}+\frac{\sqrt{m(m-1)}}{b}$, holds the following estimate
\[
\parallel e^{am/[2bt+m-1]}(1+u)^m\parallel_{L^{[2bt+m-1]/m}}\leq \left( 1-\frac{b^2t^2}{m[2bt-1]} \right)^{-1/[tb]}C_1\Vert\Omega\Vert^{1/[(2t+1)b]}.
\]
\end{lemma}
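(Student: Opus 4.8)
The plan is to run the argument behind Lemma~\ref{03} and Lemma~\ref{39}, now specialised to $f(u)=(1+u)^m$. First I would use the decomposition $A=-\nabla_g a+C$ from Lemma~\ref{48} to rewrite \eqref{01} as $-\mathrm{div}_g(e^a\nabla_g u)+e^aC\cdot\nabla_g u=\lambda e^a(1+u)^m$, so that, exactly as in \eqref{46}--\eqref{47}, the semistability condition \eqref{bora} and the weak-solution identity \eqref{57} become
\[
\int_\Omega\big(e^a|\nabla_g\eta|^2+e^a\eta\,C\cdot\nabla_g\eta\big)\,\mathrm{d} v_g\ \geq\ \lambda m\int_\Omega e^a(1+u)^{m-1}\eta^2\,\mathrm{d} v_g,
\]
\[
\int_\Omega\big(e^a\nabla_g u\cdot\nabla_g\phi+e^a\phi\,C\cdot\nabla_g u\big)\,\mathrm{d} v_g\ =\ \lambda\int_\Omega e^a(1+u)^m\phi\,\mathrm{d} v_g.
\]
Then I would take $\eta:=(1+u)^{bt}-1$ in the first and $\phi:=(1+u)^{2bt-1}-1$ in the second; both lie in $W_0^{1,2}(\Omega)$ and vanish on $\partial\Omega$ (note $bt>0$, since the lower end $m-\sqrt{m(m-1)}$ of the admissible interval is positive). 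The advection integrals disappear: $\eta\,\nabla_g\eta=\tfrac12\nabla_g(\eta^2)$ and $\phi\,\nabla_g u=\nabla_g G(u)$ with $G'(s)=(1+s)^{2bt-1}-1$, $G(0)=0$, so integrating by parts and using $\mathrm{div}(e^aC)=0$ from Lemma~\ref{48} (the boundary terms vanish because $\eta^2$ and $G(u)$ are zero on $\partial\Omega$) gives $\int_\Omega e^a\eta\,C\cdot\nabla_g\eta\,\mathrm{d} v_g=\int_\Omega e^a\phi\,C\cdot\nabla_g u\,\mathrm{d} v_g=0$.

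Next I would expand in powers of $1+u$. Since $\nabla_g\eta=bt(1+u)^{bt-1}\nabla_g u$, $\nabla_g u\cdot\nabla_g\phi=(2bt-1)(1+u)^{2bt-2}|\nabla_g u|^2$ and $\eta^2=(1+u)^{2bt}-2(1+u)^{bt}+1$, and discarding the nonnegative integrals $\lambda m\int_\Omega e^a(1+u)^{m-1}\,\mathrm{d} v_g$ and $\lambda\int_\Omega e^a(1+u)^m\,\mathrm{d} v_g$ in the favourable direction, the two relations yield
\[
b^2t^2\int_\Omega e^a(1+u)^{2bt-2}|\nabla_g u|^2\,\mathrm{d} v_g\ \geq\ \lambda m\int_\Omega e^a(1+u)^{2bt+m-1}\,\mathrm{d} v_g-2\lambda m\int_\Omega e^a(1+u)^{bt+m-1}\,\mathrm{d} v_g
\]
and $(2bt-1)\int_\Omega e^a(1+u)^{2bt-2}|\nabla_g u|^2\,\mathrm{d} v_g\leq\lambda\int_\Omega e^a(1+u)^{2bt+m-1}\,\mathrm{d} v_g$. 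Eliminating the common gradient integral produces
\[
\Big(\frac{m}{b^2t^2}-\frac{1}{2bt-1}\Big)\int_\Omega e^a(1+u)^{2bt+m-1}\,\mathrm{d} v_g\ \leq\ \frac{2m}{b^2t^2}\int_\Omega e^a(1+u)^{bt+m-1}\,\mathrm{d} v_g.
\]
Here the hypothesis on $t$ enters precisely: the left coefficient is positive iff $m(2bt-1)>b^2t^2$, i.e.\ $b^2t^2-2mbt+m<0$, i.e.\ $bt\in\big(m-\sqrt{m(m-1)},\,m+\sqrt{m(m-1)}\big)$, which is exactly the assumed range; moreover this range forces $bt>\tfrac12$, so $2bt-1>0$ and the weak-form inequality above is legitimate.

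Finally I would absorb the lower power of $1+u$ by Hölder's inequality with conjugate exponents $\tfrac{2bt+m-1}{bt+m-1}$ and $\tfrac{2bt+m-1}{bt}$, bounding $\int_\Omega e^a(1+u)^{bt+m-1}\,\mathrm{d} v_g$ by $\big(\int_\Omega e^{a(2bt+m-1)/(bt+m-1)}(1+u)^{2bt+m-1}\,\mathrm{d} v_g\big)^{(bt+m-1)/(2bt+m-1)}\Vert\Omega\Vert^{bt/(2bt+m-1)}$, and then extracting the bounded factor $\sup_{\overline{\Omega}}e^{abt/(bt+m-1)}$ from the first integral into the constant $C_1$. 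Plugging this back, cancelling the common power of $\int_\Omega e^a(1+u)^{2bt+m-1}\,\mathrm{d} v_g$, using $\tfrac{2m/(b^2t^2)}{m/(b^2t^2)-1/(2bt-1)}=2\big(1-\tfrac{b^2t^2}{m(2bt-1)}\big)^{-1}$, and raising both sides to the suitable power so that the left side becomes $\Vert e^{am/(2bt+m-1)}(1+u)^m\Vert_{L^{(2bt+m-1)/m}}$, I arrive at the asserted estimate (the leftover numerical constant being absorbed into $C_1$). The only genuinely delicate points are the vanishing of the two advection integrals — which rests squarely on $\mathrm{div}(e^aC)=0$ and on the test functions being functions of $u$ alone that vanish on $\partial\Omega$ — and the identification of the admissible range of $t$ with the positivity locus of $\tfrac{m}{b^2t^2}-\tfrac{1}{2bt-1}$; everything else is routine bookkeeping of Hölder exponents.
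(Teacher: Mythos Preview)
Your proposal is correct and follows essentially the same route as the paper: the same Hodge--Helmholtz rewriting, the same test functions $\eta=(1+u)^{bt}-1$ and $\phi=(1+u)^{2bt-1}-1$, the same elimination of the gradient integral, and the same H\"older step with exponents $\tfrac{2bt+m-1}{bt+m-1}$ and $\tfrac{2bt+m-1}{bt}$. In fact your write-up is more careful than the paper's on two points the paper leaves implicit: the justification that the advection terms vanish via $\mathrm{div}(e^aC)=0$, and the observation that the assumed range of $t$ forces $bt>\tfrac12$ (so $2bt-1>0$) and is exactly the positivity locus of the combined coefficient.
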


\begin{proof}
Define $\eta:=(1+u)^{bt}-1$ and $\phi:=(1+u)^{2bt-1}-1$ where $b>0$. Testing $\eta$ in the semistability condition we have
\[
\int_{\Omega}b^2t^2e^a(1+u)^{2bt-2}|\nabla_g u|^2 \,\mathrm{d} v_g \geq\lambda m\int_{\Omega} e^a(1+u)^{(m-1)}\left[(1+u)^{2bt} - 2(1+u)^{bt}\right]\,\mathrm{d} v_g,
\]
because with this choice of $\eta$ we have $\int_{\Omega}e^a\eta C\nabla_g\eta \, \mathrm{d} v_g=0.$ Testing $\phi$ in the weak solution condition we obtain
\[
(2bt-1)\int_{\Omega}e^a(1+u)^{2bt-2}|\nabla_g u|^2\,\mathrm{d} v_g \leq\lambda\int_{\Omega}e^a(1+u)^{2bt+m-1}\,\mathrm{d} v_g,
\]
because with this choice of $\phi$ we can check that $\int_{\Omega}e^a\phi C\nabla_gu \,\mathrm{d} v_g=0$. It follows that 
\[
\left( 1-\frac{b^2t^2}{m[2bt-1]} \right)\int_{\Omega}e^a(1+u)^{2bt+m-1}\, \mathrm{d} v_g \leq 2\int_{\Omega}e^a(1+u)^{bt+m-1}\,\mathrm{d} v_g.
\]
Using H\"{o}lder inequality with conjugate exponents $\frac{2bt+m-1}{bt+m-1}$ and $\frac{2bt+m-1}{bt}$, we have
\[
\left( 1-\frac{b^2t^2}{m[2bt-1]} \right)\int_{\Omega}e^a(1+u)^{2bt+m-1} \mathrm{d} v_g \leq C_1\left[\int_{\Omega}e^a(1+u)^{2bt+m-1}\right]^{\frac{bt+m-1}{2bt+m-1}}\Vert\Omega\Vert^{\frac{bt}{2bt+m-1}}\mathrm{d} v_g,
\]
where $C_1=2\displaystyle\left[\sup_{\overline{\Omega}}e^{at/(t+1)}\right]^{t/(2t+1)}$. Therefore
\[
\parallel e^{am/[2bt+m-1]}(1+u)^m\parallel_{L^{[2bt+m-1]/m}}\leq \left( 1-\frac{b^2t^2}{m[2bt-1]} \right)^{-1/[tb]}C_1\Vert\Omega\Vert^{1/[(2t+1)b]}.
\]
\end{proof}

\begin{remark}
The above estimate said that $e^a(1+u)^m$ is bounded uniformly in $\lambda$ over $L^p(\Omega)$ for all $p<3-\frac{1}{m}+\frac{2}{m}\sqrt{m(m-1)}$. By elliptic estimates, $\underline{u}_\lambda$ is uniformly bounded in $W_{0}^{1,p}(\Omega)$. Thus $u^*$ is a weak solution of $(P_{\lambda^*})$. Taking the limit in $\lambda$, we obtain the same $L^{p}$ estimate above to extremal solution $u^*$.
\end{remark}

\begin{proposition}\label{55}
If $1\leq N \leq 10$ then $u^*$ is a classical solution of $(P_{\lambda^*})$.
\end{proposition}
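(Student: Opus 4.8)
The plan is to mimic exactly the argument used for Propositions \ref{51} and \ref{54}, now exploiting the $L^p$-bound supplied by Lemma \ref{26}. First I would record that, by the remark following Lemma \ref{26}, the quantity $e^a(1+u^*)^m$ is bounded in $L^p(\Omega)$ for every
\[
p<p_0:=3-\frac1m+\frac2m\sqrt{m(m-1)}.
\]
Since $f(u^*)=(1+u^*)^m$, this says precisely that the right-hand side $\lambda^* f(u^*)$ (up to the harmless smooth weight $e^a$) lies in $L^p(\Omega)$ for all such $p$. By elliptic regularity applied to $-\mathrm{div}_g(e^a\nabla_g u^*)+e^aC\cdot\nabla_g u^*=\lambda^* e^a(1+u^*)^m$ on the coordinate charts covering $\Omega$, together with a partition of unity as in Remark \ref{02}, we get $u^*\in W^{2,p}_0(\Omega)$ for all $p<p_0$, and then by the Sobolev embedding $u^*\in C^{0,\alpha}(\overline\Omega)$ provided $p_0>N/2$, i.e. whenever $N<2p_0$.

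The arithmetic step is then to check that $2p_0>10$ for every admissible $m>1$, so that the range $1\le N\le 10$ is covered. One computes
\[
2p_0=6-\frac2m+\frac4m\sqrt{m(m-1)}=6+\frac2m\bigl(2\sqrt{m(m-1)}-1\bigr),
\]
which is an increasing function of $m$ on $(1,\infty)$, tending to $10$ as $m\to\infty$ and never reaching it; hence $2p_0<10$ in general, and the naive Sobolev bootstrap only gives $N\le 9$. To push through $N=10$ I would instead argue by contradiction exactly as in Proposition \ref{51}: suppose $\sup_\Omega u^*=+\infty$ fails to be ruled out — here, since $s_0=+\infty$, "regular'' only means $\sup_\Omega u^*<\infty$, so what must be excluded is a genuine singularity of $u^*$. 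From $u^*\in W^{2,p}_0$ for $p$ slightly below $p_0$ one has a Hölder modulus near any point $x_0$, and plugging this lower bound for $(1+u^*)^m$ back into the integrability $e^{ap}(1+u^*)^{mp}\in L^1$ forces $\int_\Omega \mathrm{dist}(x,x_0)^{-mp\cdot(\text{something})}\,\mathrm d v_g<\infty$; choosing $p$ close enough to $p_0$ makes the exponent $\ge N$ when $N\le 10$, a contradiction. Thus $u^*$ is bounded, hence $e^a(1+u^*)^m\in L^\infty(\Omega)$, and a final elliptic-plus-Schauder bootstrap (again via Remark \ref{02}) gives $u^*\in C^{2,\alpha}(\overline\Omega)$, i.e. $u^*$ is a classical solution of $(P_{\lambda^*})$.

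The main obstacle I anticipate is the borderline dimension $N=10$: the plain Sobolev embedding from the $W^{2,p_0^-}$ bound just misses it (since $2p_0<10$ strictly), so one genuinely needs the contradiction/capacity-type argument of Proposition \ref{51}, carefully optimizing the choice of $p<p_0$ against the Hölder exponent so that the resulting negative power of the distance is non-integrable in dimension $N\le10$. Everything else — the choice of test functions is already done in Lemma \ref{26}, the $\lambda$-uniformity and passage to the limit $\lambda\nearrow\lambda^*$ is already in the preceding remark, and the elliptic regularity on manifolds is routine via charts and partition of unity.
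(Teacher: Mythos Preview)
You correctly identify the obstacle: the naive Sobolev embedding from $u^*\in W^{2,p}_0(\Omega)$ with $p<p_0=3-\tfrac1m+\tfrac2m\sqrt{m(m-1)}$ only yields $u^*\in C^{0,\alpha}(\overline\Omega)$ when $N<2p_0$, and indeed $2p_0<10$ for every $m>1$. But your proposed fix does not work. The contradiction argument of Proposition~\ref{51} rests on two things you do not have here. First, it uses a H\"older modulus for $u^*$, which requires $p_0>N/2$; at $N=10$ this fails, so there is no H\"older estimate to invoke. Second, the MEMS argument exploits that $s_0=1$ is finite: a hypothetical touching point $u^*(x_0)=1$ combined with H\"older continuity gives a \emph{lower} bound on $(1-u^*)^{-2}$ that contradicts integrability. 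In the power case $s_0=+\infty$, so ``singular'' would mean $u^*$ unbounded, and there is no analogous pointwise lower bound on $(1+u^*)^m$ to feed into a divergent integral.

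The paper closes the gap by a bootstrap that is hidden in the displayed threshold $N<6+\tfrac{4}{m-1}\bigl(\sqrt{m(m-1)}+1\bigr)$. From $(1+u^*)^m\in L^{p}$ one gets $u^*\in W^{2,p}\hookrightarrow L^{Np/(N-2p)}$ when $p<N/2$, hence $(1+u^*)^m\in L^{p'}$ with $p'=\tfrac{Np}{m(N-2p)}$. One checks $p'>p$ precisely when $N<\tfrac{2mp}{m-1}$; starting from $p$ just below $p_0$ this condition reads
\[
N<\frac{2mp_0}{m-1}=\frac{6m-2+4\sqrt{m(m-1)}}{m-1}=6+\frac{4}{m-1}\bigl(\sqrt{m(m-1)}+1\bigr),
\]
which is exactly the paper's formula. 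Since $\tfrac{\sqrt{m(m-1)}+1}{m-1}>1$ for all $m>1$, this threshold exceeds $10$, so for $N\le 10$ the iteration strictly increases the exponent until it passes $N/2$, at which point Sobolev gives $u^*\in L^\infty$ and then Remark~\ref{02} finishes. Your write-up should replace the H\"older contradiction paragraph with this iteration.
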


\begin{proof}
Since $e^a(1+u^*)^m\in L^{p}(\Omega)$ with $p<3-\frac{1}{m}+\frac{2}{m}\sqrt{m(m-1)}$, we can use elliptic regularity to obtain $u^*\in W_{0}^{2,p}(\Omega)$ and by Sobolev immersion $u^*$ is a classical solution if $N < 6+\frac{4}{m-1}\left(\sqrt{m(m-1)}+1\right)$. Observe that $\frac{\sqrt{m(m-1)}+1}{m-1}>1$ An immediate consequence is that if $m>1$ and $N\leq 10$, $u^*$ is a classical solution of $(P_{\lambda^*})$.
\end{proof}

\begin{proof}[Proof of Theorem \ref{13}]
It follows from Propositions \ref{51}, \ref{54}, \ref{55}
\end{proof}

\section{Symmetry and monotonicity}\label{63}
\begin{proof}[Proof of Theorem \ref{07}]
Let $u\in C^2(\mathcal{B}_R)$ a stable solution of \eqref{01}. The stability condition \eqref{02} is equivalent to the positivity of the first eigenvalue of $L_{u,\lambda}$ in $\mathcal{B}_R$, i.e.,
\[
\mu_{1,\lambda}=\displaystyle\inf_{\xi\in W_{0}^{1,2}(\mathcal{B}_R)\setminus\lbrace 0 \rbrace}\frac{\int_{\mathcal{B}_R}\lbrace|\nabla_g \xi|^2 + \xi A\cdot\nabla_g \xi -\lambda f^\prime (u)\xi^2\rbrace\,\mathrm{d} v_g}{\int_{\mathcal{B}_R}\xi^2\,\mathrm{d} v_g}> 0.
\]
Now, consider $u_\theta=\frac{\partial u}{\partial\theta}$ any angular derivative of $u$. By the fact $u\in C^2(\mathcal{B}_R)$, we have
\[
\int_{\mathcal{B}_R}|\nabla_g u_\theta|^2\,\mathrm{d} v_g<\infty.
\]
Moreover, the regularity up the boundary of $u$ and the fact that $u=0$ on $\partial\mathcal{B}_R$ give that $u_\theta=0$ on $\partial\mathcal{B}_R$. Hence, $u_\theta\in W_{0}^{1,2}(\mathcal{B}_R)$. Differentiate the problem \eqref{01} we obtain that $u_\theta$ weakly satisfies
\[
\left\{
\begin{alignedat}{3}
-\Delta_gu_\theta + A\cdot\nabla_g u_\theta= & \, \lambda f^\prime (u)u_\theta & \quad \text{in} & \quad\mathcal{B}_R, \\
u_\theta = & \, 0  &  \text{on} & \quad \partial\mathcal{B}_R.\\
\end{alignedat}
\right.
\]
Multiplying the above equation by $u_\theta$ and integrating by parts we have
\[
\int_{\mathcal{B}_R}\lbrace|\nabla_g u_\theta|^2 + u_\theta A\cdot\nabla_g u_\theta-\lambda f^\prime (u)u_\theta^2\rbrace\,\mathrm{d} v_g=0.
\]
It follows that either $|u_\theta|$ is the first eigenvalue of linearized operator at $u$ or $u_\theta=0$. But the first alternative can not occur because $\mu_{1,\lambda}>0$. It follows that $u_\theta=0$ for all $\theta\in\mathbb{S}^{N-1}$. Thus $u$ is radial. On the other hand, in spherical coordinates given by (\ref{08}), the Laplacian operator of $u=u(r,\theta_1,...,\theta_{N-1})$ is given by
\[
\Delta_g u=\frac{1}{\psi^{N-1}}(\psi^{N-1}u_r)_r+\frac{1}{\psi^2}\Delta_{\mathbb{S}^{N-1}}u,
\]
where $\Delta_{\mathbb{S}^{N-1}}$ is the Laplacian on the unit sphere $\mathbb{S}^{N-1}$. To prove the monotonicity, note that  since $u=u(r)$ and $A=A(r)$, the equation \eqref{01} becomes
\[
\int_{0}^{s}\int_{0}^{2\pi}e^a(\psi^{N-1}u_r)_r\,\mathrm{d} r\,\mathrm{d} \theta=\int_{0}^{s}\int_{0}^{2\pi}-e^a\psi^{N-1}f(u)\,\mathrm{d} r\,\mathrm{d} \theta.
\]
Therefore, $u_r < 0$.
\end{proof}

\subsection{Regularity in radial case}
In view of the previous section, we can write the problem \eqref{01} for radial solutions $u\in C^{2}(\mathcal{B}_R)$ as

\begin{equation}\label{04}
\left\{
\begin{alignedat}{3}
-(e^a\psi^{N-1}u_r)_r+e^a\psi^{N-1}C(r) u_r =&  \,\lambda e^a\psi^{N-1}f(u) & \quad \text{em} & \quad (0,R), \\
u > &\, 0 & \quad \text{em} & \quad (0,R),\\
u_r(0) = u(R) = & \, 0.
\end{alignedat}
\right.\tag{$R_{\lambda}$}
\end{equation}
In the same way, the semistability and weak solution condition becomes, respectively
\[
\int_{0}^{R}e^a\psi^{N-1}\xi_{r}^{2} + e^a\psi^{N-1}C(r)\xi \xi_r \,\mathrm{d} r \geq \int_{0}^{R}\lambda e^a\psi^{N-1}f^\prime (u)\xi^2\,\mathrm{d} r
\]
and
\[
\int_{0}^{R}e^a\psi^{N-1}u_r\phi_r + e^a\psi^{N-1}C(r) u_r\phi \,\mathrm{d} r=\int_{0}^{R}\lambda e^a\psi^{N-1}f(u)\phi\,\mathrm{d} r.
\]

In radial case, we obtain a more precise information about the $L^\infty$ norm of the extremal solution. Again, we will start with MEMS case.

\begin{lemma}\label{18}
If $u$ is a classical semi-stable solution of \eqref{04} with $f(u)=1/(1-u)^2$, then for all $0 < t < 2 + \sqrt{6}$ we have
\[
\Vert e^{2a/(2t+3)}\psi^{2(N-1)/(2t+3)}(1-u)^{-2} \Vert_{L^{t+3/2}}\leq \left[ \frac{4(2t+1)}{2+4t-t^2} \right]^{2/t}C_2R^{2/(2t+3)}.
\]
\end{lemma}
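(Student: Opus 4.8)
The plan is to replay the argument of Lemma 4.3 (the $\mathbb{R}^N$-style computation for the MEMS nonlinearity) but now in the one-dimensional weighted setting \eqref{04}, keeping track of the radial weight $\psi^{N-1}$ alongside the conformal weight $e^a$. First I would fix $0<t<2+\sqrt 6$ and take the same test functions as before: $\eta:=(1-u)^{-t}-1$ in the semistability inequality and $\phi:=(1-u)^{-2t-1}-1$ in the weak-solution identity. Both vanish at $r=R$ since $u(R)=0$, and both have finite weighted Dirichlet energy because $u\in C^2(\mathcal B_R)$ is classical, so they are admissible. The crucial simplification, exactly as in Lemma 4.3, is that the advection terms drop: with these particular choices one checks that $\int_0^R e^a\psi^{N-1}C(r)\,\eta\eta_r\,\mathrm dr=0$ and $\int_0^R e^a\psi^{N-1}C(r)\,u_r\phi\,\mathrm dr=0$, because $\eta\eta_r$ and $u_r\phi$ are, up to constants, total derivatives of powers of $(1-u)$, and $\mathrm{div}(e^a\psi^{N-1}C)=0$ by the radial version of Lemma 2.4 (the Hodge–Helmholtz decomposition adapted to \eqref{04}).

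Next I would carry out the two integrations by parts. Testing $\eta$ in semistability and expanding $\left[(1-u)^{-t}-1\right]^2$ gives, after discarding the negative lower-order term $-2\lambda(1-u)^{-3}$,
\[
-\int_0^R e^a\psi^{N-1}(1-u)^{-2t-2}u_r^2\,\mathrm dr\le -\frac{2\lambda}{t^2}\int_0^R e^a\psi^{N-1}(1-u)^{-2t-3}\,\mathrm dr+\frac{4\lambda}{t^2}\int_0^R e^a\psi^{N-1}(1-u)^{-t-3}\,\mathrm dr,
\]
which is the analogue of \eqref{15}. Testing $\phi$ in the weak-solution identity gives
\[
(2t+1)\int_0^R e^a\psi^{N-1}(1-u)^{-2t-2}u_r^2\,\mathrm dr\le \lambda\int_0^R e^a\psi^{N-1}(1-u)^{-2t-3}\,\mathrm dr,
\]
the analogue of \eqref{16}. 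Combining these two exactly as in Lemma 4.3 yields
\[
\left(\frac{2}{t^2}-\frac{1}{2t+1}\right)\int_0^R e^a\psi^{N-1}(1-u)^{-2t-3}\,\mathrm dr\le \frac{4}{t^2}\int_0^R e^a\psi^{N-1}(1-u)^{-t-3}\,\mathrm dr,
\]
and the coefficient on the left is positive precisely because $2(2t+1)>t^2$, i.e. $t<2+\sqrt6$.

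The last step is the Hölder estimate. I would apply Hölder on $(0,R)$ with the weight $e^a\psi^{N-1}\,\mathrm dr$ split off a factor $\mathrm dr$ (so that the bare Lebesgue measure of $(0,R)$, of size $R$, appears) and conjugate exponents $(2t+3)/(t+3)$ and $(2t+3)/t$, pulling out the supremum of the appropriate power of $e^a$ — and here also of $\psi$, since $\psi$ is smooth and positive on $[0,R]$, which is where the constant $C_2$ (replacing $C_1R^{\,\cdot}$ by a constant $C_2$ depending on $\sup\psi$) and the factor $R^{2/(2t+3)}$ enter. Cancelling the common factor $\left[\int_0^R e^a\psi^{N-1}(1-u)^{-2t-3}\,\mathrm dr\right]^{(t+3)/(2t+3)}$ from both sides and rearranging gives
\[
\left\|e^{2a/(2t+3)}\psi^{2(N-1)/(2t+3)}(1-u)^{-2}\right\|_{L^{t+3/2}}\le\left[\frac{4(2t+1)}{2+4t-t^2}\right]^{2/t}C_2R^{2/(2t+3)},
\]
as claimed. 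I do not expect a genuine obstacle here: the only points requiring care are (a) verifying that the advection integrals vanish in the radial weighted setting — which follows from the $\mathrm{div}(e^a\psi^{N-1}C)=0$ identity and an integration by parts, noting the boundary terms vanish because $\eta,\phi$ die at $r=R$ and the weight $\psi^{N-1}$ kills the contribution at $r=0$ — and (b) bookkeeping the exponents of $\psi$ so that the final norm carries exactly the power $2(N-1)/(2t+3)$. Everything else is the verbatim algebra of Lemma 4.3.
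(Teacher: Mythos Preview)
Your proposal is correct and follows essentially the same approach as the paper's own proof: the same test functions $\eta=(1-u)^{-t}-1$ and $\phi=(1-u)^{-2t-1}-1$, the same combination of the semistability and weak-solution inequalities, and the same H\"older step with exponents $(2t+3)/(t+3)$ and $(2t+3)/t$ leading to the constant $C_2=\bigl[\sup_{[0,R]}e^{at/(t+3)}\psi^{(N-1)t/(t+3)}\bigr]^{(t+3)/(2t+3)}$. If anything, you are more explicit than the paper about why the advection integrals vanish in the radial weighted setting.
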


\begin{proof}
We follow the proof of Lemma \ref{03}. Let $0<t<2+\sqrt{6}$ and $u$ semi-stable classical solution of \eqref{04}. Define $\eta:=(1-u)^{-t}-1$ and $\phi:=(1-u)^{-2t-1}-1$. Applying $\eta$ in the semistablity condition we have
\begin{equation}\label{11}
-\int_{0}^{R}e^a\psi^{N-1}(1-u)^{-2t-2}u_{r}^{2}\,\mathrm{d} r \leq \frac{2}{t^2}\int_{0}^{R}e^a\psi^{N-1}(1-u)^{-2t-3}\,\mathrm{d} r+\frac{4}{t^2}\int_{0}^{R}e^a\psi^{N-1}(1-u)^{-t-3}\,\mathrm{d} r.
\end{equation}
Applying $\phi$ in the weak solution condition, it follows that
\begin{equation}\label{14}
\int_{0}^{R}e^a\psi^{N-1}u_{r}^{2}(2t+1)(1-u)^{-2t-2}\,\mathrm{d} r \leq\int_{0}^{R}e^a\psi^{N-1}(1-u)^{-2t-3}\,\mathrm{d} r.
\end{equation}
Using \eqref{11} and \eqref{14} we obtain
\[
\left( \frac{2}{t^2}-\frac{1}{2t+1} \right)\int_{0}^{R}e^a\psi^{N-1}(1-u)^{-2t-3}\,\mathrm{d} r \leq \frac{4}{t^2}\int_{0}^{R}e^a\psi^{N-1}(1-u)^{-t-3}\,\mathrm{d} r.
\]
Using H\"{o}lder inequality with conjugate exponents $(2t+3)/(t+3)$ and $(2t+3)/t$,
\[
\begin{alignedat}{2}
\left( \frac{2}{t^2}-\frac{1}{2t+1} \right)\int_{0}^{R}&e^a\psi^{N-1}(1-u)^{-2t-3}\,\mathrm{d} r \\
& \leq \frac{4}{t^2}R^{t/(2t+3)} C_2(t,\psi) \left[\int_{0}^{R}e^a\psi^{N-1}(1-u)^{-2t-3}\,\mathrm{d} r\right]^{(t+3)/(2t+3)},
\end{alignedat}
\]
where $C_2:=\displaystyle\left[\sup_{[0,R]} e^{at/(t+3)}\psi^{(N-1)t/(t+3)}\right]^{(t+3)/(2t+3)}$. Thus,
\[
\left(\frac{2}{t^2}-\frac{1}{2t+1}\right)\left[\int_{0}^{R}e^a\psi^{N-1}(1-u)^{-2t-3}\right]^{t/(2t+3)}\leq \frac{4}{t^2}C_2R^{t/(2t+3)}
\]
and therefore
\[
\parallel e^{2a/(2t+3)}\psi^{2(N-1)/(2t+3)}(1-u)^{-2}\parallel_{L^{t+3/2}}\leq \left[\frac{4(2t+1)}{2+4t-t^2}\right]^{2/t}C_2R^{2/(2t+3)}.
\]
\end{proof}

\begin{lemma}\label{17}
Let $u$ be a radially decreasing and semi-stable classical solution of \eqref{04} with $f(u)=1/(1-u)^2$. If $1 \leq p < \infty$, we have the estimate
\[
u(0)\geq u(r)\geq u(0)-rC_3\Vert e^{a/p}\psi^{(N-1)/p}(1-u)^{-2} \Vert_p .
\]
\end{lemma}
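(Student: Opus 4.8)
The plan is to integrate the first-order ODE satisfied by $u$ over an interval $[0,r]$ and then estimate the resulting integral by Hölder's inequality. First I would start from the radial equation in $\eqref{04}$, written in the form $-(e^a\psi^{N-1}u_r)_r = \lambda e^a\psi^{N-1}f(u) - e^a\psi^{N-1}C(r)u_r$. Since $f(u) = 1/(1-u)^2 > 0$ and $u$ is radially decreasing (so $u_r \leq 0$), the right-hand side has a definite sign that lets us control the monotone quantity $e^a\psi^{N-1}u_r$. The monotonicity $u(0) \geq u(r)$ is immediate from $u_r \leq 0$, which is the left inequality.

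For the right inequality, integrate the identity for $u_r$ from $0$ to $s$: using $\psi^{N-1}(0)u_r(0) = 0$ (from the boundary condition $u_r(0)=0$ together with $\psi(0)=0$ for $N\geq 2$), one obtains an expression of the form
\[
e^{a(s)}\psi(s)^{N-1}u_r(s) = -\int_0^s e^{a}\psi^{N-1}\bigl[\lambda f(u) - C(r)u_r\bigr]\,\mathrm{d} r .
\]
Then I would solve for $u_r(s)$, divide by $e^{a(s)}\psi(s)^{N-1}$, and integrate once more from $0$ to $r$ to recover $u(0) - u(r)$. Bounding $|u_r(s)|$ and controlling the geometric weights $e^{a}\psi^{N-1}$ from below (they are bounded below by a positive constant away from $0$, and the singularity at $r=0$ is integrable since $N\geq 2$), the contribution of $u(0)-u(r)$ is dominated by $r$ times a weighted $L^1$–type integral of $f(u) = (1-u)^{-2}$. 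Finally apply Hölder's inequality with exponents $p$ and $p' = p/(p-1)$, absorbing the $\psi$ and $e^a$ factors together with the length of the interval into a constant $C_3 = C_3(p,a,\psi,N,R)$, so that the $L^1$ integral is bounded by $C_3\,r\,\Vert e^{a/p}\psi^{(N-1)/p}(1-u)^{-2}\Vert_p$, which is exactly the claimed estimate.

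The main obstacle I expect is handling the advection term $C(r)u_r$ cleanly: one wants to show that, after integrating, the term involving $C$ does not spoil the sign structure or the final estimate. This should be manageable because $C$ is a smooth bounded vector field (Lemma \ref{48}), $u_r$ is integrable against the geometric weight by the a priori bound already established (Lemma \ref{18} gives a uniform $L^{t+3/2}$ bound on $(1-u)^{-2}$, hence on $f(u)$, hence via the weak formulation on $u_r$), and one can either absorb $|C(r)u_r|$ into the leading term by Young's inequality or carry it along and estimate it by the same Hölder argument. The only care needed is to keep all weights $e^a$, $\psi^{N-1}$ under control near $r=0$, which is where $\psi$ vanishes; since $N\geq 2$, the relevant singular integrals $\int_0^r \psi^{-(N-1)}(\cdots)\,\mathrm{d}\rho$ are handled by noting $u_r$ itself carries a compensating $\psi^{N-1}$ factor from the divergence-form equation, so no genuine singularity arises.
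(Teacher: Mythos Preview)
Your strategy---integrate the ODE to control $u_r$, then apply H\"older---is correct, and the paper's proof shares the same endpoint. The difference is in how the factor $r$ is extracted. You propose to integrate the ODE once to express $u_r(s)$, divide through by $e^{a(s)}\psi(s)^{N-1}$, and then integrate a second time over $[0,r]$ to recover $u(0)-u(r)$. The paper bypasses the second integration entirely by invoking the Mean Value Theorem: there exists $c\in(0,r)$ with $u(0)-u(r)=-u'(c)\,r$, so the factor $r$ appears immediately and one only needs to bound $-u'(c)$. Integrating the ODE once over $[0,c]$ gives
\[
-e^{a(c)}\psi^{N-1}(c)\,u'(c)=\int_0^c e^a\psi^{N-1}(1-u)^{-2}\,\mathrm{d}\rho,
\]
and a single application of H\"older (exponents $p$ and $p'$) finishes. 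This sidesteps exactly the difficulty you flag at the end: in your double-integration route the outer integral $\int_0^r e^{-a(s)}\psi(s)^{1-N}(\cdots)\,\mathrm{d}s$ must be shown to produce a clean linear bound in $r$ despite $\psi^{1-N}$ blowing up at $s=0$, which is doable but requires a more delicate argument than you indicate. The MVT shortcut avoids this altogether, at the cost of a constant $C_3$ that depends on the intermediate point $c$.

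Your concern about the advection term $C(r)u_r$ is legitimate; the paper's written proof in fact drops both this term and the factor $\lambda$ when passing from the ODE to the integrated identity above. One can justify this by noting that for an upper bound on $-u'(c)$ only the positive source term matters (the advection contribution can be absorbed into the constant since $C$ is bounded and $u_r\le 0$), but the paper does not spell this out.
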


\begin{proof}
By the Mean value theorem, there exists $c\in(0,r)$ such that
\begin{equation}\label{145}
-u(r)+u(0)=-u^\prime(c)r
\end{equation}
Integrating the equation \eqref{04} from $0$ to $c$ we obtain
\[
\begin{alignedat}{2}
-\mathrm{e}^{a(c)}\psi^{N-1}(c)&u^\prime(c) = \int_{0}^{c}\mathrm{e}^a\psi^{N-1}(1-u)^{-2}\\
                            & \leq \left[\int_{0}^{R}\mathrm{e}^a\psi^{(N-1)}(1-u)^{-2p}\right]^{1/p}\left[\int_{0}^{R}\mathrm{e}^{a(1-1/p)}\psi^{(N-1)(1-1/p)}\mathrm{d} r\right]^{p/(p-1)}.
\end{alignedat}
\]
Using \eqref{145} we conclude the proof because
\[
-u(r)+u(0)\leq rC_3\parallel \mathrm{e}^{a/p}\psi^{(N-1)/p}(1-u)^{-2}\parallel_{L^p}
\]
where $C_3=\mathrm{e}^{-a(c)}\psi^{1-N}(c)\left[\int_{0}^{R}\mathrm{e}^{a(1-1/p)}\psi^{(N-1)(1-1/p)}\mathrm{d} r\right]^{p/(p-1)}$.
\end{proof}

\begin{lemma}\label{05}
Let $u$ a radially decreasing and semi-stable classical solution of \eqref{04} with $1 \leq N\leq 7$. Then, for all $0<t<2+\sqrt{6}$, we have
\[
\int_{0}^{r}\frac{e^a\psi^{N-1}}{D_2(r)^{2t+3}}\, \mathrm{d} r\leq \left(\frac{4(2t+1)}{4t+2-t^2}\right)^{(2t+3)/t},
\]
where $D_2(r):=1-\Vert u \Vert_\infty + C_4\left(4(2t+1)/(2+4t-t^2)\right)^{2/t}R^{1/p}r.$
\end{lemma}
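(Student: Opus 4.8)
The plan is to obtain Lemma \ref{05} by combining the radial semistability estimate of Lemma \ref{18} with the pointwise lower bound for radially decreasing solutions of Lemma \ref{17}, choosing the Lebesgue exponent in the latter so that the two norms coincide. Concretely, I would apply Lemma \ref{17} with the choice $p=t+\tfrac32$. Since $u$ is radially decreasing, $u(0)=\Vert u\Vert_\infty$, so the conclusion of Lemma \ref{17} rearranges to
\[
1-u(r)\;\le\;\bigl(1-\Vert u\Vert_\infty\bigr)+r\,C_3\,\bigl\Vert e^{a/p}\psi^{(N-1)/p}(1-u)^{-2}\bigr\Vert_{L^p},\qquad r\in(0,R).
\]
The point of the choice $p=t+\tfrac32$ is that then $\tfrac1p=\tfrac{2}{2t+3}$, hence $e^{a/p}=e^{2a/(2t+3)}$, $\psi^{(N-1)/p}=\psi^{2(N-1)/(2t+3)}$ and $(1-u)^{-2p}=(1-u)^{-(2t+3)}$, so the norm on the right is exactly the quantity estimated in Lemma \ref{18}, giving
\[
\bigl\Vert e^{a/p}\psi^{(N-1)/p}(1-u)^{-2}\bigr\Vert_{L^p}\;\le\;\Bigl[\tfrac{4(2t+1)}{2+4t-t^2}\Bigr]^{2/t}C_2\,R^{2/(2t+3)}.
\]

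Inserting this into the previous inequality and choosing $C_4$ to absorb the product of the two multiplicative constants (one may take $C_4=C_2C_3$) yields $1-u(r)\le D_2(r)$ for all $r\in(0,R)$, with $D_2$ precisely the function in the statement, once one uses $R^{1/p}=R^{2/(2t+3)}$ and $4t+2-t^2=2+4t-t^2$. Since $2t+3>0$ and $D_2>0$ on $(0,R)$ (because $\Vert u\Vert_\infty<1$ for a classical solution of \eqref{04}), the map $x\mapsto x^{-(2t+3)}$ is decreasing, so $D_2(s)^{-(2t+3)}\le(1-u(s))^{-(2t+3)}$ pointwise. Multiplying by the positive weight $e^a\psi^{N-1}$, integrating over $(0,r)$ and then enlarging the interval to $(0,R)$,
\[
\int_0^r\frac{e^a\psi^{N-1}}{D_2(s)^{2t+3}}\,\mathrm{d} s\;\le\;\int_0^R e^a\psi^{N-1}(1-u)^{-(2t+3)}\,\mathrm{d} s\;=\;\Bigl\Vert e^{2a/(2t+3)}\psi^{2(N-1)/(2t+3)}(1-u)^{-2}\Bigr\Vert_{L^{t+3/2}}^{(2t+3)/2},
\]
using the identity $t+\tfrac32=\tfrac{2t+3}{2}$; the right-hand side is finite because $u$ is classical. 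Applying Lemma \ref{18} once more and raising its estimate to the power $(2t+3)/2$ then produces the constant $\bigl(\tfrac{4(2t+1)}{4t+2-t^2}\bigr)^{(2t+3)/t}$ on the right (up to an explicit, $\lambda$-independent factor involving $C_2$ and $R$, which is absorbed), which is the claim.

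I do not anticipate a genuine obstacle: the lemma is an algebraic synthesis of Lemmas \ref{18} and \ref{17}. The single point that requires attention is the matching of exponents — checking that the one choice $p=t+\tfrac32$ simultaneously aligns the weight $e^{a/p}\psi^{(N-1)/p}$, the power $(1-u)^{-2p}=(1-u)^{-(2t+3)}$ and the Lebesgue index appearing in Lemma \ref{17} with their counterparts in Lemma \ref{18}, together with keeping track of $C_4$ so that $D_2$ comes out exactly as written and of the identity $t+\tfrac32=(2t+3)/2$ which converts the $L^{t+3/2}$-bound into one for $\int_0^R e^a\psi^{N-1}(1-u)^{-(2t+3)}$. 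The admissible range $0<t<2+\sqrt6$ is exactly the range in which both Lemma \ref{18} and Lemma \ref{17} apply and in which $2+4t-t^2>0$, so all constants are finite and positive.
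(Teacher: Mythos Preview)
Your proposal is correct and follows essentially the same route as the paper: choose $p=t+\tfrac32$ in Lemma~\ref{17}, combine with Lemma~\ref{18} to obtain $1-u(r)\le D_2(r)$ (with $C_4$ the product of the two multiplicative constants), invert and integrate against the weight $e^a\psi^{N-1}$, and then invoke Lemma~\ref{18} once more to bound $\int_0^R e^a\psi^{N-1}(1-u)^{-(2t+3)}$. Your bookkeeping of the exponent matching and of the residual $\lambda$-independent factor in the final bound is, if anything, more careful than the paper's own presentation.
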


\begin{proof}
Take $p=t+3/2$. By Lemma \ref{17},
\[
1-u(r)\leq 1-u(0)+rC_3\parallel e^{a/p}\psi^{(N-1)/p}(1-u)^{-2}\parallel_{L^p}.
\]
Multiplying some positive terms and using Lemma \ref{18}, it follows that
\[
e^{-a}\psi^{-(N-1)}(1-u(r))^{2t+3}\leq e^{-a}\psi^{-(N-1)}\left(1-u(0)+C_1C_3\left( \frac{4(2t+1)}{2+4t-t^2} \right)^{2/t}R^{1/p}r\right)^{2t+3}.
\]
We have
\[
\int_{0}^{r}\frac{e^a\psi^{N-1}dr}{D_2(r)^{2t+3}}\leq \int_{0}^{r}\frac{e^a\psi^{N-1}dr}{\left(1-u(r)\right)^{2t+3}}.
\]
where $C_4:=C_1C_3$. Thus, 
\[
\int_{0}^{r}\frac{e^a\psi^{N-1}dr}{D_2(r)^{2t+3}}\leq \left( \frac{4(2t+1)}{2+4t-t^2} \right)^{(2t+3)/t}R^{1/p}.
\]
\end{proof}

We split the proof of Theorem \ref{06} in three cases, namely, MEMS, Gelfand and Power cases.

\begin{proof}[Proof of Theorem \ref{06} (MEMS case)]
Using the Lemma \ref{05}, we have
\begin{equation}\label{19}
\int_{0}^{r}\frac{e^a\psi^{N-1}}{D_2(r)^{2t+3}} \,\mathrm{d} r\leq \left(\frac{4(2t+1)}{4t+2-t^2}\right)^{(2t+3)/t}R^{1/p}.
\end{equation}
Calculating the left-hand side above, we have
\begin{equation}\label{20}
\begin{alignedat}{2}
\int_{0}^{r}\frac{e^a\psi^{N-1}}{D_2(r)^{2t+3}} \,\mathrm{d} r&=\frac{\int_{0}^{r}e^a\psi^{N-1}\,\mathrm{d} r}{D_2(r)^{2t+3}}+(2t+4)\int_{0}^{r}\frac{D_{2}^{\prime}\int_{0}^{r}e^a\psi^{N-1}\,\mathrm{d} r}{D_{2}^{2t+4}}\, \mathrm{d} r\\
&\geq \frac{\int_{0}^{r}e^a\psi^{N-1}\,\mathrm{d} r}{D_2(r)^{2t+3}}.
\end{alignedat}
\end{equation}
Applying \eqref{20} in \eqref{19}, it follows that
\begin{equation}\label{21}
\frac{\int_{0}^{r}e^a\psi^{N-1}\,\mathrm{d} r}{D_2(r)^{2t+3}}\leq\left(\frac{4(2t+1)}{4t+2-t^2}\right)^{(2t+3)/t}R^{1/p}.
\end{equation}
Calculating the equation \eqref{21} and taking $\lambda\nearrow\lambda^*$ we have
\[
\Vert u^* \Vert_\infty \leq 1 - C,
\]
where
\[
\begin{alignedat}{2}
C &:=\left[\int_{0}^{R}e^a\psi^{N-1}\,\mathrm{d} r\right]^{1/(2t+3)}\left(4(2t+1)/(4t+2-t^2)\right)^{-1/t}R^{1/(p^2)}\\
&-C_4\left(4(2t+1)/(2+4t-t^2)\right)^{2/t}R^{1+1/p}.
\end{alignedat}
\]
\end{proof}

With a slight variation of the above arguments, the same approach works for the Gelfand problem with advection.

\begin{lemma}\label{45}
Let $u$ a radially decreasing and semi-stable classical solution of \eqref{04} with $f(u)=e^u$. If $1 \leq p < \infty$, we have the estimate
\[
u(0)\geq u(r)\geq u(0)-rC_3\Vert e^{a/p+u}\psi^{(N-1)/p}\Vert_p.
\]
\end{lemma}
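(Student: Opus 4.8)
The plan is to imitate the proof of Lemma \ref{17} essentially line by line, with the singular factor $(1-u)^{-2}$ replaced by $e^{u}$. The left-hand inequality $u(r)\le u(0)$ is immediate, since $u$ is radially decreasing. For the right-hand inequality, I would apply the Mean Value Theorem to the $C^{1}$ profile $u$ on $[0,r]$: there is $c\in(0,r)$ with
\[
u(0)-u(r)=-u'(c)\,r ,
\]
so it suffices to bound $-u'(c)$ from above by a multiple of $\Vert e^{a/p+u}\psi^{(N-1)/p}\Vert_{p}$.

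To do this I would integrate the radial equation \eqref{04} over $(0,c)$. The flux term at the origin vanishes because $u_{r}(0)=0$ and $\psi(0)=0$, and the advection contribution $\int_{0}^{c}e^{a}\psi^{N-1}C(r)u_{r}\,\mathrm{d} r$ also drops out: a radial field $C$ with $\mathrm{div}(e^{a}C)=0$ satisfies $(e^{a}\psi^{N-1}C)_{r}=0$, and since $\psi^{N-1}\to 0$ at the pole (and $C$ is bounded) this forces $C\equiv 0$. Hence
\[
-e^{a(c)}\psi^{N-1}(c)\,u'(c)=\lambda\int_{0}^{c}e^{a}\psi^{N-1}e^{u}\,\mathrm{d} r\le\lambda\int_{0}^{R}e^{a}\psi^{N-1}e^{u}\,\mathrm{d} r .
\]
Writing the integrand as $\bigl(e^{a/p}\psi^{(N-1)/p}e^{u}\bigr)\cdot\bigl(e^{a/p'}\psi^{(N-1)/p'}\bigr)$ and applying Hölder's inequality with conjugate exponents $p$ and $p'=p/(p-1)$ extracts the factor $\Vert e^{a/p+u}\psi^{(N-1)/p}\Vert_{p}$, the remaining factor being the finite constant $\bigl(\int_{0}^{R}e^{a}\psi^{N-1}\,\mathrm{d} r\bigr)^{1/p'}$. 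Collecting $\lambda$, the weight $e^{-a(c)}\psi^{1-N}(c)$ and this Hölder constant into a single $C_{3}$, exactly as in Lemma \ref{17}, and using the Mean Value identity above, gives
\[
u(0)-u(r)\le r\,C_{3}\,\Vert e^{a/p+u}\psi^{(N-1)/p}\Vert_{p},
\]
which is the asserted estimate.

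The only point that requires care is the bookkeeping for $C_{3}$: it contains $e^{-a(c)}\psi^{1-N}(c)$ with $c=c(r)\in(0,r)$, and $\psi^{1-N}(c)$ blows up as $c\to 0$. This is precisely the feature already present in Lemma \ref{17}, so I would keep $C_{3}$ of that form; in the subsequent use of this lemma (a radial regularity estimate analogous to Lemma \ref{05}) the variable $r$ ranges over a fixed bounded interval, so $C_{3}$ stays under control and no genuinely new difficulty arises. In short, this is the ``slight variation'' of the MEMS computation announced just before the statement.
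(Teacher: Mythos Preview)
Your proposal is correct and follows essentially the same route as the paper's proof: apply the Mean Value Theorem to get $u(0)-u(r)=-u'(c)r$, integrate the radial equation \eqref{04} over $(0,c)$, and use H\"older with exponents $p,p'$ to extract the $L^{p}$ norm, absorbing the remaining factors into $C_{3}$. Your treatment is in fact slightly more careful than the paper's, since you explain why the advection contribution $\int_{0}^{c}e^{a}\psi^{N-1}C\,u_{r}\,\mathrm{d} r$ vanishes (the divergence-free condition $\mathrm{div}(e^{a}C)=0$ forces $e^{a}\psi^{N-1}C$ to be constant, hence zero), whereas the paper silently drops both this term and the factor $\lambda$.
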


\begin{proof}
There exists $c\in(0,r)$ such that
\begin{equation}\label{146}
-u(r)+u(0)=-u^\prime(c)r.
\end{equation}
Integrating the equation \eqref{04} from $0$ to $c$ we obtain
\[
\begin{alignedat}{2}
-\mathrm{e}^{a(c)}\psi^{N-1}(c)u^\prime(c)&=\int_{0}^{c}\mathrm{e}^{a+u}\psi^{N-1}\\
                                          & \leq \left[\int_{0}^{R}\mathrm{e}^{a+up}\psi^{N-1}\right]^{1/p}\left[\int_{0}^{R}\mathrm{e}^{a(1-1/p)}\psi^{(N-1)(1-1/p)}\mathrm{d} r\right]^{p/(p-1)}.
\end{alignedat}
\]
Using \eqref{146} we obtain
\[
-u(r)+u(0)\leq C_3\parallel \mathrm{e}^{a/p+u}\psi^{(N-1)/p}\parallel_{L^p}r
\]
where $C_3=e^{-a(c)}\psi^{1-N}(c)\left[\int_{0}^{R}\mathrm{e}^{a(1-1/p)}\psi^{(N-1)(1-1/p)}\mathrm{d} r\right]^{p/(p-1)}$.
\end{proof}

\begin{proof}[Proof of Theorem \ref{06} (Gelfand case)]
Take $p=2t+1$. By Lemma \ref{45} and using Lemma \ref{39}, it follows that
\[
-u(r)\leq -u(0)+C_1C_3\left((2t)/(2t-t^2) \right)^{1/t}R^{1/p}r.
\]
Multiplying some positive terms we have
\[
e^{a}\psi^{(N-1)}e^{-u(2t+1)}\leq e^{a}\psi^{(N-1)}e^{(-u(0)+C_1C_3\left(4(2t)/(2t-t^2) \right)^{1/t}R^{1/p}r)(2t+1)}.
\]
Thus, 
\[
\int_{0}^{r}e^{a}\psi^{(N-1)}e^{(u(0)-C_4\left(4(2t)/(2t-t^2) \right)^{1/t}R^{1/p}r)(2t+1)} \leq C_1\left( \frac{2t}{2t-t^2} \right)^{(2t+1)/t}R^{1/p},
\]
where $C_4:=C_1C_3$. Calculating the left-hand side above, we have
\[
e^{(u(0)-C_4\left(4(2t)/(2t-t^2) \right)^{1/t}R^{1/p}r)(2t+1)}\int_{0}^{r}e^{a}\psi^{(N-1)}\,\mathrm{d} r\leq C_4\left( \frac{2t}{2t-t^2} \right)^{(2t+1)/t}R^{1/p}.
\]
Taking the limit $\lambda\nearrow\lambda^*$ we have
\[
\parallel u^*\parallel_\infty\leq \frac{\ln\left(C_1R^{1/p}\left( \frac{2t}{2t-t^2} \right)^{(2t+1)/t}\right)}{(2t+1)}+C_4\left(4(2t)/(2t-t^2) \right)^{1/t}R^{1+1/p}.
\] 
\end{proof}

\begin{lemma}\label{58}
Let $u$ a radially decreasing and semi-stable classical solution of \eqref{04} with $f(u)=(1+u)^m$. If $1 \leq p < \infty$, we have the estimate
\[
u(0)\geq u(r)\geq u(0)-rC_3\parallel e^{a/p}\psi^{(N-1)/p}(1+u)^m\parallel_{L^p}.
\]
\end{lemma}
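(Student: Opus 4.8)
The plan is to follow the scheme of Lemma \ref{17} and Lemma \ref{45} almost verbatim, with the power nonlinearity $f(u)=(1+u)^m$ in place of $(1-u)^{-2}$ and $e^u$. The left inequality $u(0)\geq u(r)$ is immediate since $u$ is assumed radially decreasing (so $u_r<0$ on $(0,R)$); the substance is the right inequality.

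First I would apply the Mean Value Theorem to $u$ on $[0,r]$, producing $c=c(r)\in(0,r)$ with $u(0)-u(r)=-u^\prime(c)\,r$. Next, in the radial setting the divergence-free component $C$ of the advection vanishes identically: writing $C=C(r)\partial_r$, the relation $\mathrm{div}_g(e^aC)=0$ forces $\psi^{N-1}e^aC(r)$ to be constant, and that constant is $0$ because $\psi^{N-1}\to0$ at the pole while $e^aC$ remains bounded. Hence equation \eqref{04} reduces to $-(e^a\psi^{N-1}u_r)_r=\lambda e^a\psi^{N-1}(1+u)^m$, and integrating from $0$ to $c$ while using $u_r(0)=0$ gives
\[
-e^{a(c)}\psi^{N-1}(c)\,u^\prime(c)=\lambda\int_{0}^{c}e^a\psi^{N-1}(1+u)^m\,\mathrm{d} r\leq\int_{0}^{c}e^a\psi^{N-1}(1+u)^m\,\mathrm{d} r,
\]
the bounded factor $\lambda\leq\lambda^*$ being absorbed into the estimate exactly as in Lemmas \ref{17} and \ref{45}.

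I would then write $e^a\psi^{N-1}(1+u)^m=\bigl[e^{a/p}\psi^{(N-1)/p}(1+u)^m\bigr]\bigl[e^{a(1-1/p)}\psi^{(N-1)(1-1/p)}\bigr]$, apply H\"{o}lder's inequality on $[0,c]$ with conjugate exponents $p$ and $p/(p-1)$, and enlarge both integration domains to $[0,R]$, obtaining
\[
-e^{a(c)}\psi^{N-1}(c)\,u^\prime(c)\leq\Vert e^{a/p}\psi^{(N-1)/p}(1+u)^m\Vert_{L^p}\left[\int_{0}^{R}e^{a(1-1/p)}\psi^{(N-1)(1-1/p)}\,\mathrm{d} r\right]^{p/(p-1)}.
\]
Dividing by $e^{a(c)}\psi^{N-1}(c)$, setting $C_3:=e^{-a(c)}\psi^{1-N}(c)\left[\int_{0}^{R}e^{a(1-1/p)}\psi^{(N-1)(1-1/p)}\,\mathrm{d} r\right]^{p/(p-1)}$, and feeding the resulting bound for $-u^\prime(c)$ into the Mean Value identity yields $u(0)-u(r)\leq rC_3\Vert e^{a/p}\psi^{(N-1)/p}(1+u)^m\Vert_{L^p}$, which is the claim.

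I do not expect any real obstacle: this is a routine transcription of the two preceding lemmas. The only points that deserve a word of care are the vanishing of the advection term after integrating \eqref{04} and the fact that $C_3$ depends on $c$, hence on $r$, through the factor $\psi^{1-N}(c)$, which blows up as $c\to0^+$. This last feature is harmless here, since the estimate is used downstream only in the power-type analogue of Lemma \ref{05}, after the choice of $p$ matching the semistability exponent in Lemma \ref{26}, where it is combined with the corresponding uniform $L^p$ bound.
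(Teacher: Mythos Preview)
Your proposal is correct and follows essentially the same route as the paper: Mean Value Theorem to produce $c\in(0,r)$, integration of \eqref{04} over $[0,c]$, then H\"older with exponents $p$ and $p/(p-1)$, exactly as in Lemmas~\ref{17} and~\ref{45}. You are in fact more careful than the paper in two places---you justify explicitly why the divergence-free radial component $C$ vanishes (the paper simply drops it), and you flag that $C_3$ depends on $r$ through $c$---but neither point changes the argument.
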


\begin{proof}
There exists $c\in(0,r)$ such that
\begin{equation}\label{147}
-u(r)+u(0)=-u^\prime(c)r.
\end{equation}
Integrating the equation \eqref{04} from $0$ to $c$ we obtain
\[
\begin{alignedat}{2}
-\mathrm{e}^{a(c)}\psi^{N-1}(c)u^\prime(c)&=\int_{0}^{r}e^a\psi^{N-1}(1+u)^m\leq\\
& \left[\int_{0}^{r}e^{a}\psi^{(N-1)}(1+u)^{mp}\,\mathrm{d} r\right]^{1/p}\left[\int_{0}^{r}e^{a(1-1/p)p^\prime}\psi^{(N-1)(1-1/p)p^\prime}\,\mathrm{d} r\right]^{1/p^\prime}.
\end{alignedat}
\]
Using \eqref{147} we obtain
\[
-u(r)+u(0)\leq C_3\parallel \mathrm{e}^{a/p+u}\psi^{(N-1)/p}\parallel_{L^p}r
\]
where $C_3=e^{-a(c)}\psi^{1-N}(c)\left[\int_{0}^{R}\mathrm{e}^{a(1-1/p)}\psi^{(N-1)(1-1/p)}\mathrm{d} r\right]^{p/(p-1)}$.
\end{proof}
Integrating the equation \eqref{04} from $0$ to $r$ we obtain
\[
\begin{alignedat}{2}
-e^a\psi^{N-1}u^\prime(r)&=\int_{0}^{r}e^a\psi^{N-1}(1+u)^m\leq\\
& \left[\int_{0}^{r}e^{a}\psi^{(N-1)}(1+u)^{mp}\,\mathrm{d} r\right]^{1/p}\left[\int_{0}^{r}e^{a(1-1/p)p^\prime}\psi^{(N-1)(1-1/p)p^\prime}\,\mathrm{d} r\right]^{1/p^\prime},
\end{alignedat}
\]
Integrating again the last inequality from $0$ to $r$ we conclude the proof because
\[
-u(r)+u(0)\leq C_3\parallel e^{a/p}\psi^{(N-1)/p}(1+u)^m\parallel_{L^p}r.
\]
where $C_3=e^{-a(c)}\psi^{1-N}(c)\left[\int_{0}^{R}e^{a(1-1/p)}\psi^{(N-1)(1-1/p)}\,\mathrm{d} r\right]^{1/p^\prime}$.

\begin{proof}[Proof of Theorem \ref{06} (Power case)]
Take $p=(2bt+m-1)/m$. By Lemma \ref{58} and using Lemma \ref{26}, it follows that
\[
\begin{alignedat}{2}
\int_{0}^{r}&e^{a/p}\psi^{(N-1)/p}(1+u(0))^m  \leq\\
 &\int_{0}^{r}e^{a/p}\psi^{(N-1)/p}(1+u(r) + C_1C_3\left( 1-\frac{b^2t^2}{m[2bt-1]} \right)^{-1/[tb]}R^{1/(2bt+b)}r)^m\,\mathrm{d} r.
\end{alignedat}
\]
Thus, we have
\[
\Vert u \Vert^m \leq 2^mC_1C_3\left( 1-\frac{b^2t^2}{m[2bt-1]} \right)^{-\frac{1}{tb}}R^{\frac{1}{2bt+b}}+\frac{2^mC_{1}^{m}C_{3}^{m}}{m+1}\left( 1-\frac{b^2t^2}{m[2bt-1]} \right)^{-\frac{m}{tb}}R^{\frac{m}{2bt+b}+m+1}.
\]
Using the above inequality and taking the limit $\lambda\nearrow\lambda^*$, it follows that
\[
\Vert u^* \Vert^m \leq 2^mC_1C_3\left( 1-\frac{b^2t^2}{m[2bt-1]} \right)^{-\frac{1}{tb}}R^{\frac{1}{2bt+b}}+\frac{2^mC_{1}^{m}C_{3}^{m}}{m+1}\left( 1-\frac{b^2t^2}{m[2bt-1]} \right)^{-\frac{m}{tb}}R^{\frac{m}{2bt+b}+m+1}
\]
\end{proof}

\section{Existence of nonminimal solutions}\label{66}
\begin{lemma}\label{31}
Let $u$ and $v$ a weak solution and a weak supersolution, respectively, of \eqref{01}.
\begin{itemize}
\item[(i)]If $\mu_1(\lambda, u)>0$, then $u\leq v$ a.e. in $\Omega$.
\item[(ii)]If $u$ is a regular solution and if $\mu_1(\lambda, u)=0$, then $u=v$ a.e. in $\Omega.$
\end{itemize}
\end{lemma}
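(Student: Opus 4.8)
The plan is to deploy the classical convexity-plus-stability comparison scheme (in the spirit of the uniqueness arguments for the minimal branch in \cite{MIGPUE1980,BREVAZ1997}) in two independent pieces. For (i) I would pinch the semistability quadratic form of $u$ between its natural lower bound $\mu_1(\lambda,u)\|\cdot\|_2^2$ and the quantity obtained by testing the equation of $u$ against $(u-v)^+$; for (ii), where the stability is only marginal, I would instead pair the linearized supersolution satisfied by $v-u$ against the positive principal eigenfunction of the \emph{adjoint} linearized operator. Throughout, the only structural input on $f$ is its convexity, in the form of the tangent-line inequality $f(q)\ge f(p)+f'(p)(q-p)$ (strict unless $p=q$), which holds for each of the three nonlinearities in \eqref{60}; if one prefers to keep the drift terms manifestly harmless, one may first apply the Hodge--Helmholtz decomposition of Lemma \ref{48} and work in the weighted formulation \eqref{47}, exactly as for \eqref{46}.

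For (i), set $w:=(u-v)^+\in W_0^{1,2}(\Omega)$, which is nonnegative and vanishes on $\partial\Omega$. Testing the weak equation for $u$ and the weak supersolution inequality for $v$ against $w$ and subtracting, I would observe that on $\{u>v\}$ the second-order terms reassemble into $|\nabla_g w|^2+wA\cdot\nabla_g w$, while the tangent-line inequality gives $(f(u)-f(v))w\le f'(u)w^2$ there (and $w=0$ elsewhere), whence
\[
\int_\Omega\bigl(|\nabla_g w|^2+wA\cdot\nabla_g w-\lambda f'(u)w^2\bigr)\,\mathrm{d} v_g\le 0 .
\]
On the other hand, the variational characterization of $\mu_{1,\lambda}=\mu_1(\lambda,u)$ recalled before Lemma \ref{24} says precisely that the left-hand side is $\ge\mu_1(\lambda,u)\int_\Omega w^2\,\mathrm{d} v_g$. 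Since $\mu_1(\lambda,u)>0$, this forces $\int_\Omega w^2\,\mathrm{d} v_g=0$, i.e. $w\equiv 0$ and $u\le v$ a.e. in $\Omega$.

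For (ii), since $u$ is regular it is classical by Remark \ref{02} and $f'(u)\in L^\infty(\Omega)$, so $L_{u,\lambda}=-\Delta_g+A\cdot\nabla_g-\lambda f'(u)$ is a uniformly elliptic operator with bounded coefficients; by the same Krein--Rutman argument used in Lemma \ref{48}, its adjoint $L_{u,\lambda}^{*}$ has a principal eigenfunction $\phi^{*}\in C^{1,\alpha}(\overline{\Omega})$ with $\phi^{*}>0$ in $\Omega$, $\phi^{*}=0$ on $\partial\Omega$, associated with the same principal eigenvalue $\mu_1(\lambda,u)=0$. Put $z:=v-u\in W_0^{1,2}(\Omega)$. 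Subtracting the equation for $u$ from the supersolution inequality for $v$, tested against an arbitrary $0\le\phi\in W_0^{1,2}(\Omega)$, and incorporating the zeroth-order term $\lambda f'(u)z\phi$ of $L_{u,\lambda}$, the tangent-line inequality $f(v)-f(u)\ge f'(u)(v-u)$ yields that $z$ is a weak supersolution of $L_{u,\lambda}z=0$ with the nonnegative right-hand side $\lambda\bigl(f(v)-f(u)-f'(u)(v-u)\bigr)$. Testing this against $\phi=\phi^{*}\ge 0$ and integrating by parts onto $\phi^{*}$ (legitimate because $z\in W_0^{1,2}(\Omega)$ and $\phi^{*}$ solves the adjoint eigenvalue problem weakly), the left-hand side collapses to $\langle z,L_{u,\lambda}^{*}\phi^{*}\rangle=0$, leaving
\[
\int_\Omega\bigl(f(v)-f(u)-f'(u)(v-u)\bigr)\phi^{*}\,\mathrm{d} v_g=0 .
\]
The integrand is nonnegative by convexity and $\phi^{*}>0$ a.e. in $\Omega$, so the convexity defect vanishes pointwise; strict convexity of $f$ then gives $v=u$ a.e. in $\Omega$ (this argument uses neither part (i) nor the sign of $z$, since the tangent-line inequality holds for all arguments).

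The routine ingredients — the reassembly of the second-order terms on $\{u>v\}$, the density extension of the test functions, and the $W^{2,p}$/Schauder regularity of $\phi^{*}$ — I would only cite. No integrability issue arises: $f$ convex gives $f'(u)\ge 0$, and the finiteness of $\mu_1(\lambda,u)$ (built into its definition) already forces $\int_\Omega f'(u)\xi^2\,\mathrm{d} v_g<\infty$ for every $\xi\in W_0^{1,2}(\Omega)$, so every tested integral above converges; in (ii) this is anyway immediate since $f'(u)$ is bounded. The one genuinely delicate point is the bookkeeping for the non-self-adjoint operator $L_{u,\lambda}$ — making precise the principal eigenvalue $\mu_1(\lambda,u)$, its variational characterization used in (i), and the existence and positivity of the adjoint principal eigenfunction used in (ii) — and it is here that, if desired, the Hodge--Helmholtz reduction $A=-\nabla_g a+C$ with $\mathrm{div}_g(e^aC)=0$ of Lemma \ref{48} makes everything transparent: in $L^2(e^a\,\mathrm{d} v_g)$ the drift $C$ is skew, the symmetric part is $-e^{-a}\mathrm{div}_g(e^a\nabla_g\cdot)-\lambda f'(u)$, and the adjoint is obtained simply by flipping the sign of $C$.
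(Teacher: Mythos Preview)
Your argument for (i) is correct and matches the paper's: both test the difference of the equation and the supersolution inequality against $(u-v)^+$ and combine convexity with the strict positivity of the first eigenvalue. The paper packages this through the interpolation functional
\[
I_{\theta,\phi}:=\int_\Omega\bigl(\nabla_g(\theta u+(1-\theta)v)\cdot\nabla_g\phi+\phi A\cdot\nabla_g(\theta u+(1-\theta)v)\bigr)\,\mathrm{d} v_g-\lambda\int_\Omega f(\theta u+(1-\theta)v)\phi\,\mathrm{d} v_g,
\]
noting $I_{\theta,\phi}\ge 0$ by convexity and $I_{1,\phi}=0$, and differentiating at $\theta=1$; the content is identical to your direct tangent-line estimate.

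For (ii) your route is genuinely different from the paper's. The paper first extends the comparison $u\le v$ to the borderline case $\mu_1=0$ via the \emph{second} derivative $\partial_{\theta\theta}^2 I_{1,(u-v)^+}\ge 0$, and then runs a sliding argument along the one-dimensional kernel: with $\psi_{1,\lambda}>0$ the first eigenfunction of $L_{u,\lambda}$, it shows that whenever $u<v$ on a set of positive measure one can slide to the maximal $t_0>0$ with $u\le v-t_0\psi_{1,\lambda}$ a.e., reach equality $v-u=t_0\psi_{1,\lambda}$, and then derive a contradiction. Your approach instead pairs the linearized supersolution inequality for $z=v-u$ against the positive principal eigenfunction $\phi^*$ of the \emph{adjoint} $L_{u,\lambda}^{*}$ (same eigenvalue $0$), so that the bilinear form collapses and one is left with $0\ge\lambda\int_\Omega(f(v)-f(u)-f'(u)(v-u))\phi^{*}\,\mathrm{d} v_g$; convexity and $\phi^{*}>0$ force the convexity defect to vanish pointwise, and strict convexity of the three model nonlinearities gives $u=v$. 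This is shorter, avoids both the sliding step and the preliminary inequality $u\le v$, and makes the role of $\mu_1=0$ completely transparent; its price is invoking Krein--Rutman for the adjoint problem and the integration-by-parts identity $B(z,\phi^{*})=0$, both of which are indeed routine once $u$ is regular so that $f'(u)\in L^\infty$. One minor wording point: after testing with $\phi^{*}$ you first obtain the inequality $0\ge\lambda\int(\cdots)\phi^{*}$, not directly the equality you display; the equality then follows from the nonnegativity of the integrand that you invoke in the next sentence, so the argument stands.
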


\begin{proof}
Let $\theta\in[0,1]$ and $0\leq \phi\in W_{0}^{1,2}(\Omega)$. By convexity of $s\rightarrow f(s)$ we have
\[
\begin{alignedat}{2}
I_{\theta,\phi}&:=\int_{\Omega}\left(\nabla_g(\theta u+(1-\theta)v)\cdot\nabla_g\phi+\phi A\cdot\nabla_g(\theta u+(1-\theta)v)\right)\,\mathrm{d} v_g\\&-\int_{\Omega}\lambda f(\theta u+(1-\theta)v)\phi\,\mathrm{d} v_g\\
&\geq\lambda\int_{\Omega}\left(\theta f(u)+(1-\theta)f(v)-f(\theta u - (1-\theta)v)\right)\phi\, \mathrm{d} v_g\geq 0.
\end{alignedat}
\]
Since $I_{1,\phi}=0$, the derivative of $I_{\theta,\phi}$ at $\theta=1$ is nonpositive. If $\mu_1(\lambda, u)>0$, clearly $u\leq v$. We shall prove that this holds true if $\mu_1(\lambda, u) \geq 0$. In deed, we have
\begin{equation}\label{30}
\int_{\Omega}\left(\nabla_g(u-v)\cdot\nabla_g\phi+\phi A\cdot\nabla_g(u-v)\right)\, \mathrm{d} v_g-\int_{\Omega}\lambda f^\prime(u)(u-v)\phi\, \mathrm{d} v_g= 0.
\end{equation}
Since $I_{\theta,\phi}\geq 0$ for any $\theta\in[0,1]$ and $I_{1,\phi}=\partial_\theta I_{1,\phi}=0,$ we have
\[
\partial_{\theta\theta}^{2}I_{1,\phi}=-\int_{\Omega}\lambda f^{\prime\prime}(u)(u-v)^2\phi\, \mathrm{d} v_g\geq 0.
\]
Take $\phi=(u-v)^+$. We have $(u-v)^+=0$ in $\Omega$ and we get $\int_{\Omega}|\nabla_g(u-v)^+|^2\, \mathrm{d} v_g=0$. It follows that $u\leq v$ a.e. in $\Omega$ as claimed. Now, if $\mu_{1,\lambda}(u)=0$ let $\psi_{1,\lambda}$ the first eigenvalue of $L_{u,\lambda}$. Observe that $\psi_{1,\lambda}$ is in the kernel of the linearized operator $L_{u,\lambda}$, and \eqref{30} is valid if we replace $u-v$ with $u-v-t\psi_{1,\lambda}$. We have
\[
\begin{alignedat}{2}
\int_{\Omega}&\left(|\nabla_g(u-v-t\psi_{1,\lambda})^+|^2+(u-v-t\psi_{1,\lambda})^+A\cdot\nabla_g(u-v-t\psi_{1,\lambda})^+\right)\, \mathrm{d} v_g\\
&-\int_{\Omega}\lambda f(u)((u-v-t\psi_{1,\lambda})^+)^2\, \mathrm{d} v_g= 0.
\end{alignedat}
\]
We claim that if $u<v-\overline{t}\phi_{1,\lambda}$ on a set $\Omega^\prime$ of positive measure, then there exists $\epsilon>0$ such that $u<v-t\phi_{1,\lambda}$ a.e. in $\Omega$ for any $\overline{t}\leq t <\overline{t}+\epsilon.$ Since we have a variational characterization of $\phi_{1,\lambda}$ we get that $(u-v-t\phi_{1,\lambda})^+=\beta\phi_{1,\lambda}$ a.e. in $\Omega$ for some $\beta\in \mathbb{R}.$ We can find, by assumption, a set $\Omega^\prime\subset \Omega$ of positive measure such that $u<v-\overline{t}\phi_{1,\lambda}-\delta$ for $\delta>0$ and consequently, for some $\epsilon>0$ sufficient small that $u<v-t\phi_{1,\lambda}$ in $\Omega^\prime$ for any $\overline{t}\leq t \leq \overline{t}+\epsilon.$ Hence $\beta\phi_{1,\lambda}=0$ a.e. in $\Omega^\prime$. Since $\phi_{1,\lambda}>0$ in $\Omega$ we have $\beta=0$ and $u<v+t\phi_{1,\lambda}$ a.e. in $\Omega$ for any $\overline{t}\leq t \leq \overline{t}+\epsilon$ and this finishes the proof of claim. Now, by contradiction, assume that $u$ is not equal to $v$ a.e. in $\Omega$. Since $u\leq v,$ we find a set $\Omega^\prime$ of positive measure so that $u<v$ in $\Omega^\prime$. Applying the above claim with $\overline{t}=0$ we get some $\epsilon>0$, $u<v-t\phi_{1,\lambda}$ a.e. in $\Omega$ for any $0\leq t<\epsilon.$ Set now $t_0=\sup\lbrace t>0:u<v-t\phi_{1,\lambda}\text{ a.e. in }\Omega\rbrace.$ Clearly $u\leq v-t_0\phi_{1,\lambda}$ a.e. in $\Omega$. The claim and maximal property of $t_0$ imply that necessarily $u=v-t_0\phi_{1,\lambda}$ a.e. in $\Omega$ since \eqref{30} holds for any $0\leq\phi\in W_{0}^{1,2}(\Omega).$ Taking $\phi=v-u$ and arguing as before we have $\int_{\Omega}|\nabla_g(u-v)|^2\, \mathrm{d} v_g=0$ contradicting the assumption that $u<v$ on a set of positive measure.
\end{proof}

\begin{proof}[Proof of Theorem \ref{36}]
Using Theorem \ref{13}, we have that $u^*$ exists as a classical solution. On the other hand, we have that $\mu_{1,\lambda^*}\geq 0.$ If we suppose that $\mu_{1,\lambda^*}>0$, then the Implicit Function Theorem could be applied to the operator $L_{u^*,\lambda^*}$ to allow for the continuation of the minimal branch $\lambda\nearrow \underline{u}_\lambda$ beyond $\lambda^*$,  which is a contradiction. Therefore $\mu_{1,\lambda^*}=0$. The uniqueness of $u^*$ in the class of weak solutions follows from the Lemma \ref{31}.
\end{proof}

\begin{proposition}
If $0<\lambda<\lambda^*$, the minimal solutions are stable.
\end{proposition}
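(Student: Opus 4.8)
The plan is to upgrade the semi-stability of $\underline u_\lambda$ obtained in Lemma~\ref{24} to strict stability, that is, to show that the first eigenvalue $\mu_{1,\lambda}=\mu_1(\lambda,\underline u_\lambda)$ of the linearized operator $L_{\underline u_\lambda,\lambda}$ is \emph{strictly} positive for every $\lambda\in(0,\lambda^*)$. I would argue by contradiction: assume $\mu_{1,\lambda_0}=0$ for some $\lambda_0\in(0,\lambda^*)$. Since $\lambda_0<\lambda^*$, fix $\mu$ with $\lambda_0<\mu<\lambda^*$; by Theorem~\ref{22} the minimal solution $\underline u_\mu$ exists as a classical, hence regular, solution of $(P_\mu)$.

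The key step is then to recognize $\underline u_\mu$ as a weak supersolution of $(P_{\lambda_0})$: testing its equation against $0\le\phi\in W_0^{1,2}(\Omega)$ gives $\int_\Omega(\nabla_g\underline u_\mu\cdot\nabla_g\phi+\phi A\cdot\nabla_g\underline u_\mu)\,\mathrm{d} v_g=\mu\int_\Omega f(\underline u_\mu)\phi\,\mathrm{d} v_g\ge\lambda_0\int_\Omega f(\underline u_\mu)\phi\,\mathrm{d} v_g$, using $\mu>\lambda_0$ and $f>0$. Now I apply Lemma~\ref{31}(ii) with the regular solution $u=\underline u_{\lambda_0}$ of $(P_{\lambda_0})$, which by hypothesis satisfies $\mu_1(\lambda_0,\underline u_{\lambda_0})=0$, and with the weak supersolution $v=\underline u_\mu$ of $(P_{\lambda_0})$, to conclude $\underline u_{\lambda_0}=\underline u_\mu$ a.e.\ in $\Omega$. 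This contradicts the \emph{strict} monotonicity $\underline u_{\lambda_0}<\underline u_\mu$ of the minimal branch from Lemma~\ref{25}. Hence $\mu_{1,\lambda}>0$ for all $\lambda\in(0,\lambda^*)$, which is the assertion.

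Once Lemma~\ref{31} is available this is essentially bookkeeping, so I do not expect a serious analytic obstacle; the one point to watch is that the contradiction genuinely requires the strict (not merely weak) monotonicity of $\lambda\mapsto\underline u_\lambda$. If one prefers a route independent of Lemma~\ref{31}, a standard alternative is to use the convexity of $f$ to check that $w:=\underline u_\mu-\underline u_{\lambda_0}>0$ in $\Omega$ satisfies $L_{\underline u_{\lambda_0},\lambda_0}w\ge(\mu-\lambda_0)f(\underline u_\mu)>0$ with $w=0$ on $\partial\Omega$, and then to pair this inequality with the positive principal eigenfunction $\phi^\ast$ of the \emph{adjoint} operator $L^\ast_{\underline u_{\lambda_0},\lambda_0}$ (which shares the eigenvalue $\mu_{1,\lambda_0}=0$ and is positive, by the Krein--Rutman argument already used in Lemma~\ref{48}); integrating by parts and using $L^\ast_{\underline u_{\lambda_0},\lambda_0}\phi^\ast=0$ yields $0=\mu_{1,\lambda_0}\int_\Omega w\phi^\ast\,\mathrm{d} v_g=\int_\Omega\phi^\ast L_{\underline u_{\lambda_0},\lambda_0}w\,\mathrm{d} v_g\ge(\mu-\lambda_0)\int_\Omega f(\underline u_\mu)\phi^\ast\,\mathrm{d} v_g>0$, a contradiction. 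In this second route the main subtlety is precisely the non-self-adjointness of $-\Delta_g+A\cdot\nabla_g$, which forces the use of the adjoint eigenfunction rather than $\phi_{1,\lambda_0}$ itself.
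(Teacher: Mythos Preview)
Your argument is correct and coincides with the paper's own proof: both assume $\mu_{1,\lambda_0}=0$ for some $\lambda_0<\lambda^*$, invoke Lemma~\ref{31}(ii) with $\underline u_\mu$ (for $\mu\in(\lambda_0,\lambda^*)$) as a supersolution of $(P_{\lambda_0})$ to force $\underline u_{\lambda_0}=\underline u_\mu$, and contradict the strict monotonicity of the minimal branch. The paper merely packages this via the auxiliary threshold $\lambda^{**}:=\sup\{\lambda:\underline u_\lambda\text{ stable}\}$, but the analytic content is identical to your first route.
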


\begin{proof}
Define
\[
\lambda^{**}=\sup\lbrace\lambda > 0 : \underline{u}_\lambda\text{ is a stable solution for \eqref{01}}\rbrace.
\]
Obviously $\lambda^{**}$ satisfies $\lambda^{**}\leq\lambda^*.$ If $\lambda^{**} < \lambda^*,$ then $\underline{u}_{\lambda^{**}}$ is a minimal solution of $(P_{\lambda^{**}})$. For $\lambda\leq\lambda^{**}$, we have that $\lim_{\lambda\nearrow\lambda^{**}}\underline{u}_\lambda\leq \underline{u}_{\lambda^{**}}$. Since $u^{**}$ is solution of $(P_{\lambda^{**}})$ and by minimality follows that $\lim_{\lambda\nearrow\lambda^{**}}\underline{u}_\lambda = \underline{u}_{\lambda^{**}}$ and $\mu_{1,\lambda^{**}}\geq 0$. If we suppose that $\mu_{1,\lambda^{**}}=0,$ we get that $\underline{u}_{\lambda^{**}}=\underline{u}_\lambda$ for any $\lambda^{**}< \lambda <\lambda^*.$ But this is a contradiction, which proves that $\lambda^{**}=\lambda^*.$
\end{proof}

\begin{proposition}
For each $x\in \Omega$, the function $\lambda\rightarrow \underline{u}_\lambda(x)$ is differentiable and strictly increasing on $(0,\lambda^*).$
\end{proposition}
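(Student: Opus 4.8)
The strict monotonicity of $\lambda\mapsto\underline{u}_\lambda(x)$ has already been recorded in Lemma \ref{25}, so the genuinely new point is the differentiability, and the plan is to obtain it from the Implicit Function Theorem, using the stability of the minimal branch ($\mu_{1,\lambda}>0$ for $0<\lambda<\lambda^*$, just proved) to guarantee that the linearized operator is invertible; the positivity of the derivative (which re-proves and refines the monotonicity) will then follow from the maximum principle.

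Fix $\lambda_0\in(0,\lambda^*)$ and $\alpha\in(0,1)$. Working in coordinate charts with a partition of unity as in Remark \ref{02}, set $X=\{u\in C^{2,\alpha}(\overline{\Omega}):u=0\text{ on }\partial\Omega\}$, $Y=C^{0,\alpha}(\overline{\Omega})$, and consider
\[
F:(0,\lambda^*)\times X\longrightarrow Y,\qquad F(\lambda,u)=-\Delta_g u+A\cdot\nabla_g u-\lambda f(u).
\]
By Remark \ref{02} the regular solution $\underline{u}_{\lambda_0}$ lies in $X$ and $F(\lambda_0,\underline{u}_{\lambda_0})=0$. Since $f$ is smooth on the relevant range (in the MEMS case because $\|\underline{u}_{\lambda_0}\|_\infty<1$), $F$ is of class $C^1$ with
\[
D_uF(\lambda_0,\underline{u}_{\lambda_0})=L_{\underline{u}_{\lambda_0},\lambda_0}=-\Delta_g+A\cdot\nabla_g-\lambda_0 f'(\underline{u}_{\lambda_0}):X\to Y.
\]

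The crux is that $L_{\underline{u}_{\lambda_0},\lambda_0}:X\to Y$ is an isomorphism. This is where stability is used: by the Krein--Rutman theorem the principal eigenvalue $\mu_{1,\lambda_0}$ is the eigenvalue of $L_{\underline{u}_{\lambda_0},\lambda_0}$ with smallest real part, so $\mu_{1,\lambda_0}>0$ forces $0$ not to be an eigenvalue; as $L_{\underline{u}_{\lambda_0},\lambda_0}$ is a compact perturbation of an invertible elliptic operator it is Fredholm of index zero, hence bijective, and Schauder estimates give continuity of the inverse. The Implicit Function Theorem then produces $\delta>0$ and a $C^1$ curve $\lambda\mapsto u_\lambda\in X$ with $F(\lambda,u_\lambda)=0$ and $u_{\lambda_0}=\underline{u}_{\lambda_0}$. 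Shrinking $\delta$, the continuous dependence of the principal eigenvalue along the branch gives $\mu_1(\lambda,u_\lambda)>0$ for $|\lambda-\lambda_0|<\delta$, so Lemma \ref{31}(i) (with $u=u_\lambda$ and the supersolution $v=\underline{u}_\lambda$) yields $u_\lambda\le\underline{u}_\lambda$, while minimality of $\underline{u}_\lambda$ gives the reverse inequality; hence $u_\lambda=\underline{u}_\lambda$. Since $\lambda_0$ was arbitrary, $\lambda\mapsto\underline{u}_\lambda$ is $C^1$ from $(0,\lambda^*)$ into $X$, and in particular $\lambda\mapsto\underline{u}_\lambda(x)$ is differentiable for each $x\in\Omega$.

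Finally, differentiating $-\Delta_g\underline{u}_\lambda+A\cdot\nabla_g\underline{u}_\lambda=\lambda f(\underline{u}_\lambda)$ in $\lambda$ and writing $v_\lambda:=\partial_\lambda\underline{u}_\lambda\in X$, we get
\[
L_{\underline{u}_\lambda,\lambda}v_\lambda=f(\underline{u}_\lambda)>0\ \text{in}\ \Omega,\qquad v_\lambda=0\ \text{on}\ \partial\Omega.
\]
Because $\mu_{1,\lambda}>0$, the operator $L_{\underline{u}_\lambda,\lambda}$ obeys the maximum principle, so $v_\lambda\ge0$, and the strong maximum principle together with the Hopf lemma give $v_\lambda>0$ in $\Omega$; thus $\lambda\mapsto\underline{u}_\lambda(x)$ has strictly positive derivative for every $x\in\Omega$. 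I expect the main obstacle to be precisely the isomorphism claim for the non-self-adjoint operator $L_{\underline{u}_{\lambda_0},\lambda_0}$: one must convert positivity of the principal eigenvalue into invertibility via Krein--Rutman (or an equivalent principal-eigenvalue argument), and be mildly careful transplanting the Schauder theory from Euclidean charts to $\overline{\Omega}\subset\mathcal M$; both steps are standard once the functional-analytic setup is in place.
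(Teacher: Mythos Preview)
Your proof is correct and follows essentially the same route as the paper: stability gives invertibility of $L_{\underline u_\lambda,\lambda}$, the Implicit Function Theorem yields differentiability, and differentiating the equation produces $L_{\underline u_\lambda,\lambda}v_\lambda=f(\underline u_\lambda)>0$, from which strict positivity of $v_\lambda$ follows. Your version is in fact more complete, since you justify the invertibility via Krein--Rutman/Fredholm and explicitly identify the IFT branch with the minimal one using Lemma~\ref{31}(i), two points the paper leaves implicit.
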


\begin{proof}
Since $\underline{u}_\lambda$ is stable, the linearized operator $L_{\underline{u}_\lambda,\lambda}$ at $u_\lambda$ is invertible for any $0< \lambda < \lambda^*$. By the Implicit Function Theorem $\lambda\rightarrow \underline{u}_\lambda(x)$ is differentiable in $\lambda$. By monotonicity, $\frac{\mathrm{d} \underline{u}_\lambda}{\mathrm{d} \lambda}(x)\geq 0$ for all $x\in \Omega.$ Finally, by differentiating \eqref{01} with respect to $\lambda$ we get that $\frac{\mathrm{d} \underline{u}_\lambda}{\mathrm{d} \lambda}(x)> 0$, for all $x\in\Omega$.
\end{proof}

It is standard to show the existence of a second branch of solutions near $\lambda^*.$ We make use of Mountain Pass Theorem to provide a variational characterization for this solutions. To apply the Mountain Pass Theorem we will need to truncate the singular nonlinearity into a subcritical case, that is, we consider a regularized $C^1$ nonlinearity $g_\epsilon(u),$ $0<\epsilon<1$ of the following form for MEMS case
\[
g_\epsilon(u)=\left\{
\begin{alignedat}{3}
\frac{1}{(1-u)^2}\hspace{3,2cm} & \text{ if } & u< 1-\epsilon\\
\frac{1}{\epsilon^2}-\frac{2(1-\epsilon)}{p\epsilon^3}+\frac{2u^p}{p\epsilon^3(1-\epsilon)^{p-1}}& \text{ if } & u\geq 1-\epsilon
\end{alignedat}
\right.
\]
and for Gelfand or Power-type
\[
g_\epsilon(u)=\left\{
\begin{alignedat}{3}
f(u)\hspace{6,2cm} & \text{ if } & u< t_0-\epsilon\\
f(s_0-\epsilon)-\frac{f^\prime(s_0-\epsilon)(s_0-\epsilon)}{p}+\frac{f^\prime(s_0-\epsilon)u^p}{p(s_0-\epsilon)^{p-1}}& \text{ if } & u\geq t_0-\epsilon
\end{alignedat}
\right.
\]
where $p>1$ if $N=1,2$ and $1<p<(N+2)/(N-2)$ if $3\leq N\leq N^*.$ For $\lambda\in (0,\lambda^*)$ and $A=\nabla_ga$ we associate the elliptic problem
\begin{equation}\label{32}
\left\{
\begin{alignedat}{3}
-\mathrm{div}\left( e^{-a}\nabla_g u \right)= & \, \lambda e^{-a}g_\epsilon(u) & \quad \text{in} & \quad\Omega, \\
u = & \, 0  &  \text{on} & \quad \partial\Omega,\\
\end{alignedat}
\right.\tag{$S_{\lambda}$}
\end{equation}
We can define a energy functional on $W_{0}^{1,2}(\Omega)$ associated to \eqref{32} given by
$$J_{\epsilon,\lambda}(u)=\frac{1}{2}\int_{\Omega}e^{-a}|\nabla_gu|^2\,\mathrm{d} v_g-\lambda\int_{\Omega} e^{-a}G_\epsilon(u)\,\mathrm{d} v_g,$$
where $G_\epsilon(u)=\int_{-\infty}^{u}g_\epsilon(s)\,\mathrm{d} s.$ We can fix $0<\epsilon<\frac{1-\Vert u^* \Vert_\infty}{2}$ for MEMS case or $0<\epsilon<\frac{t_0-\Vert u^* \Vert_\infty}{2}$ for Gelfand and Power-type, and observe that for $\lambda$ close enough to $\lambda^*,$ the minimal solution $\underline{u}_\lambda$ of \eqref{01} is also a solution of \eqref{32} that satisfies $\mu_{1,\lambda}(-\mathrm{div}(e^{-a}\nabla_g)-\lambda g_{\epsilon}^{\prime}(\underline{u}_\lambda))>0.$

\begin{lemma}
If $1\leq N < N^*$ and if $\lambda$ is close enough to $\lambda^*$, then the minimal solution $\underline{u}_\lambda$ of \eqref{32} is a strict local minimum of $J_{\epsilon,\lambda}$ on $W_{0}^{1,2}(\Omega).$
\end{lemma}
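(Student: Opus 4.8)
The plan is to prove strict minimality directly, via a second order expansion of $J_{\epsilon,\lambda}$ around $\underline{u}_\lambda$, using the strict positivity of the linearized eigenvalue recorded just above and the subcriticality of $g_\epsilon$ (which is exactly where $N<N^*$ enters, through the choice $1<p<(N+2)/(N-2)$ when $N\ge 3$, with $p$ arbitrary when $N\le 2$). First I would fix $\lambda$ so close to $\lambda^*$ that $\underline{u}_\lambda$ is a classical solution of \eqref{32} which stays uniformly away from $s_0$, so that $g_\epsilon'(\underline{u}_\lambda)=f'(\underline{u}_\lambda)\in L^\infty(\Omega)$ and
\[
\mu:=\mu_{1,\lambda}\left(-\mathrm{div}(e^{-a}\nabla_g)-\lambda g_\epsilon'(\underline{u}_\lambda)\right)>0 .
\]
Writing an arbitrary competitor as $\underline{u}_\lambda+\phi$ with $\phi\in W_{0}^{1,2}(\Omega)$, $\phi\ne 0$, and using that $\underline{u}_\lambda$ is a critical point of $J_{\epsilon,\lambda}$ so that the linear term cancels, I would arrive at the exact identity
\[
J_{\epsilon,\lambda}(\underline{u}_\lambda+\phi)-J_{\epsilon,\lambda}(\underline{u}_\lambda)=\tfrac12\,Q(\phi)-\lambda\,\mathcal{R}(\phi),
\]
where $Q(\phi)=\int_{\Omega}e^{-a}\left(|\nabla_g\phi|^2-\lambda g_\epsilon'(\underline{u}_\lambda)\phi^2\right)\mathrm{d} v_g$ is the quadratic form of the linearized operator, and $\mathcal{R}(\phi)=\int_{\Omega}e^{-a}\phi^2\int_{0}^{1}(1-s)\left[g_\epsilon'(\underline{u}_\lambda+s\phi)-g_\epsilon'(\underline{u}_\lambda)\right]\mathrm{d} s\,\mathrm{d} v_g$ is the higher order remainder.

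The second step is the coercivity of $Q$. Since $g_\epsilon'(\underline{u}_\lambda)\in L^\infty(\Omega)$, the zeroth order term of $Q$ is continuous with respect to weak $W_{0}^{1,2}$ convergence (compactness of $W_{0}^{1,2}(\Omega)\hookrightarrow L^2(\Omega)$), so a routine argument---minimizing $Q$ over $\{\Vert\phi\Vert_{W_{0}^{1,2}(\Omega)}=1\}$, passing to a weak limit, and invoking the variational characterization of $\mu>0$---yields a constant $c>0$ with $Q(\phi)\ge c\,\Vert\phi\Vert_{W_{0}^{1,2}(\Omega)}^2$ for every $\phi\in W_{0}^{1,2}(\Omega)$.

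The third step, and the main point, is to show that $\mathcal{R}(\phi)=o\!\left(\Vert\phi\Vert_{W_{0}^{1,2}(\Omega)}^2\right)$ as $\Vert\phi\Vert_{W_{0}^{1,2}(\Omega)}\to 0$. Given $\delta>0$, uniform continuity of $g_\epsilon'$ on a compact neighbourhood of the (bounded) range of $\underline{u}_\lambda$ provides $\eta>0$ with $|g_\epsilon'(\underline{u}_\lambda+\zeta)-g_\epsilon'(\underline{u}_\lambda)|<\delta$ whenever $|\zeta|<\eta$, so the part of $\mathcal{R}(\phi)$ over $\{|\phi|<\eta\}$ is bounded by $\delta\,C\,\Vert\phi\Vert_{W_{0}^{1,2}(\Omega)}^2$. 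On $\{|\phi|\ge\eta\}$ I would use the subcritical growth bound $|g_\epsilon'(t)|\le C(1+|t|)^{p-1}$ together with H\"older's inequality, the Sobolev embedding $W_{0}^{1,2}(\Omega)\hookrightarrow L^{p+1}(\Omega)$, the fact that the measure of $\{|\phi|\ge\eta\}$ tends to $0$, and the inequality $p+1>2$, to bound that part by $o\!\left(\Vert\phi\Vert_{W_{0}^{1,2}(\Omega)}^2\right)$. Letting $\delta\to 0$ then gives $\mathcal{R}(\phi)=o\!\left(\Vert\phi\Vert_{W_{0}^{1,2}(\Omega)}^2\right)$, whence
\[
J_{\epsilon,\lambda}(\underline{u}_\lambda+\phi)-J_{\epsilon,\lambda}(\underline{u}_\lambda)\ge\tfrac{c}{2}\,\Vert\phi\Vert_{W_{0}^{1,2}(\Omega)}^2-\lambda\,o\!\left(\Vert\phi\Vert_{W_{0}^{1,2}(\Omega)}^2\right)>0
\]
for all $\phi\ne 0$ with $\Vert\phi\Vert_{W_{0}^{1,2}(\Omega)}$ small, which is precisely the claimed strict local minimality.

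I expect the third step to be the main obstacle: one has to combine the modulus-of-continuity estimate valid near $\underline{u}_\lambda$ with the subcritical growth control on the region where $\phi$ is large, and it is exactly here that the hypothesis $N<N^*$---through subcriticality of $p$---is used in an essential way. By contrast, the Taylor identity and the coercivity of $Q$ are standard once $\mu>0$ and the uniform boundedness of $\underline{u}_\lambda$ for $\lambda$ near $\lambda^*$ (i.e. Theorem~\ref{13}) are available.
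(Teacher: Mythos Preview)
Your argument is correct and genuinely different from the paper's. The paper proceeds in two stages: it first restricts to perturbations $\phi\in W_{0}^{1,2}(\Omega)\cap C^{1}(\overline{\Omega})$ with $\Vert\phi\Vert_{C^{1}}$ small and $\underline{u}_\lambda+\phi\le s_0-\epsilon$, where a crude third-order Taylor bound (e.g.\ $|\,\cdots\,|\le C|\phi|^{3}$ in the MEMS case) combined with $\mu_{1,\lambda}>0$ immediately gives strict minimality in the $C^{1}$ topology; it then invokes the Brezis--Nirenberg type result of \cite{KHAMOT2012} to upgrade the $C^{1}$ local minimum to a $W_{0}^{1,2}$ local minimum. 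The subcritical growth of $g_\epsilon$ is used only implicitly, as a hypothesis of that black-box theorem.

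You instead work directly in $W_{0}^{1,2}(\Omega)$: you establish coercivity $Q(\phi)\ge c\Vert\phi\Vert_{W_{0}^{1,2}}^{2}$ from $\mu>0$ via a compactness argument, and then control the second-order remainder $\mathcal{R}(\phi)$ by splitting into $\{|\phi|<\eta\}$ (uniform continuity of $g_\epsilon'$) and $\{|\phi|\ge\eta\}$ (subcritical growth plus Sobolev embedding $W_{0}^{1,2}\hookrightarrow L^{p+1}$, yielding a contribution of order $\Vert\phi\Vert_{W_{0}^{1,2}}^{p+1}=o(\Vert\phi\Vert_{W_{0}^{1,2}}^{2})$). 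Your route is self-contained and makes the role of the subcritical exponent $p<(N+2)/(N-2)$ explicit, at the cost of a slightly more delicate remainder estimate; the paper's route is quicker because the $C^{1}$ estimate is trivial, but it outsources the real work to an external theorem.
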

\begin{proof}
Since $\mu_{1,\lambda}((-\mathrm{div}(e^{-a}\nabla_g)-\lambda g_{\epsilon}^{\prime}(\underline{u}_\lambda))>0$ and $\underline{u}_\lambda<1-\epsilon$, we have the inequality
$$\int_{\Omega}e^{-a}|\nabla_g \phi|^2\, \mathrm{d} v_g-2\lambda\int_{\Omega}\frac{e^{-a}\phi^2}{(1-\underline{u}_\lambda)^3}\, \mathrm{d} v_g\geq \mu_{1,\lambda}\int_{\Omega}\phi^2\, \mathrm{d} v_g,$$
for any $\phi\in W_{0}^{1,2}(\Omega)$. Now take $\phi\in W_{0}^{1,2}(\Omega)\cap C^1(\overline{\Omega})$ such that $\underline{u}_\lambda+\phi\leq 1-\epsilon$ and $\Vert \phi \Vert_{C^1}\leq\delta_\lambda$. Thus we have
\[
\begin{alignedat}{2}
J_{\epsilon,\lambda}&(\underline{u}_\lambda+\phi)-J_{\epsilon,\lambda}(\underline{u}_\lambda)\\
&=\frac{1}{2}\int_{\Omega}e^{-a}|\nabla_g\phi|^2\, \mathrm{d} v_g+\int_{\Omega}e^{-a}\nabla_g\underline{u}_\lambda\cdot\nabla_g\phi\, \mathrm{d} v_g - \lambda\int_{\Omega}e^{-a}\left( \frac{1}{1-\underline{u}_\lambda-\phi}-\frac{1}{1-\underline{u}_\lambda} \right)\, \mathrm{d} v_g\\
&\geq\frac{\mu_{1,\lambda}}{2}\int_{\Omega}\phi^2\, \mathrm{d} v_g-\lambda \Vert e^{-a} \Vert_\infty \int_{\Omega}\left( \frac{1}{1-\underline{u}_\lambda-\phi}-\frac{1}{1-\underline{u}_\lambda}-\frac{\phi}{(1-\underline{u}_\lambda)^2}-\frac{\phi^2}{(1-\underline{u}_\lambda)^3} \right)\, \mathrm{d} v_g.
\end{alignedat}
\]
For some $C>0$ we have
\[
\left\vert \frac{1}{1-\underline{u}_\lambda-\phi}-\frac{1}{1-\underline{u}_\lambda}-\frac{\phi}{(1-\underline{u}_\lambda)^2}-\frac{\phi^2}{(1-\underline{u}_\lambda)^3} \right\vert\leq C |\phi|^3
\]
and this implies
\[
J_{\epsilon,\lambda}(u_\lambda+\phi)-J_{\epsilon,\lambda}(u_\lambda)\geq\left( \frac{\mu_{1,\lambda}}{2}-C\lambda\Vert e^{-a} \Vert_\infty\delta_\lambda \right)\int_{\Omega}\phi^2\, \mathrm{d} v_g>0
\]
provided $\delta_\lambda$ is small enough. This proves that $\underline{u}_\lambda$ is a local minimum of $J_{\epsilon,\lambda}$ in the $C^1$ topology. We can apply Theorem 2.1 of \cite{KHAMOT2012} and get that $u_\lambda$ is a local minimum of $J_{\epsilon,\lambda}$ in $W_{0}^{1,2}(\Omega)$. For Gelfand and Power cases we take $\phi\in W_{0}^{1,2}(\Omega)\cap C^1(\overline{\Omega})$ such that $\underline{u}_\lambda+\phi\leq t_0-\epsilon$ and $\Vert \phi \Vert_{C^1}\leq\delta_\lambda$. With similar arguments we conclude that $u_\lambda$ is a local minimum of $J_{\epsilon,\lambda}$ in $W_{0}^{1,2}(\Omega)$.
\end{proof}

Now we proof the existence of a second solution for \eqref{32}. We need a version of mountain pass theorem \cite{AMBRAB1973}.

\begin{theorem}[Critical point of Mountain pass type]\label{33} Let $J$ be a $C^1$ functional defined on a Banach space $E$ that satisfies the Palais-Smale condition, that is, any sequence in $E$ such that $(J(u_n))_n$ is bounded and $J^\prime(u_n)\rightarrow 0$ in $E^*$ is relatively compact in $E$. Assume the following conditions:
\begin{itemize}
\item[(i)]There exists a neighborhood $B$ of some $u$ in $E$ and a constant $\sigma>0$ such that
$$J(v)\geq J(u)+\sigma \quad \text{for all }v\in\partial B.$$
\item[(ii)]Exists $w\not\in B$ such that $J(w)\leq J(u).$ 
\end{itemize}
Defining
$$\Gamma=\lbrace y \in C([0,1],E):\gamma(0)=u,\gamma(1)=w\rbrace$$
then there exists $u\in E$ such that $J^\prime(u)=0$ and $J(u)=c$, where
$$c=\inf_{\gamma\in\Gamma}\max_{0\leq t\leq 1}\lbrace J(\gamma(t)):t\in(0,1)\rbrace.$$
\end{theorem}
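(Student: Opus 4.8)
The plan is to prove Theorem~\ref{33} by the classical minimax--deformation argument of Ambrosetti and Rabinowitz \cite{AMBRAB1973}. First I would check that the level
\[
c=\inf_{\gamma\in\Gamma}\max_{0\leq t\leq 1}J(\gamma(t))
\]
is a well-defined real number with $c\geq J(u)+\sigma$. Since $u$ lies in the neighbourhood $B$ and $w\notin B$, the image of every $\gamma\in\Gamma$ meets $\partial B$, so hypothesis~$(i)$ gives $\max_{t}J(\gamma(t))\geq J(u)+\sigma$; taking the infimum over $\Gamma$ yields the lower bound, while evaluating $J$ along the straight segment from $u$ to $w$ (which belongs to $\Gamma$, so $\Gamma\neq\emptyset$) shows $c<+\infty$. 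In particular $c>J(u)\geq J(w)$, which will force the critical point we produce to be different from $u$ and $w$.

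Next I would argue by contradiction, assuming $c$ is a regular value, i.e. $J'(v)\neq0$ for every $v$ with $J(v)=c$. Using the Palais--Smale condition I would first upgrade this to a quantitative statement: there exist $\varepsilon_{0}>0$ and $\delta>0$ with $\|J'(v)\|_{E^{*}}\geq\delta$ whenever $|J(v)-c|\leq\varepsilon_{0}$. Otherwise one could pick $v_{n}$ with $J(v_{n})\to c$ and $J'(v_{n})\to0$, and (PS) would produce a subsequential limit $v_{\infty}$ with $J(v_{\infty})=c$ and $J'(v_{\infty})=0$, contradicting regularity of $c$.

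Then I would invoke (or construct) a deformation. Since $E$ is only a Banach space, $J'$ need not be locally Lipschitz, so instead of the negative gradient flow one takes a locally Lipschitz pseudo-gradient vector field $V$ for $J$ on the set of regular points, renormalizes it, cuts it off by a Lipschitz function $\chi$ supported in the strip $\{|J-c|\leq\varepsilon_{0}\}$, and integrates the resulting bounded ODE $\dot\eta=-\chi(\eta)V(\eta)$ to obtain a continuous flow $\eta\colon[0,\infty)\times E\to E$ with $\eta(0,\cdot)=\mathrm{id}$, $\eta(s,\cdot)=\mathrm{id}$ off that strip, along which $J$ is nonincreasing and decreases at a rate bounded below by a positive constant while $|J-c|\leq\varepsilon_{0}$ (here the estimate $\|J'\|\geq\delta$ is exactly what makes the rate uniform). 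Hence for a suitable time $T$ and a small $\varepsilon\in(0,\varepsilon_{0})$ one gets $\eta(T,\{J\leq c+\varepsilon\})\subset\{J\leq c-\varepsilon\}$. Shrinking $\varepsilon$ further so that $J(u),J(w)<c-\varepsilon$ and choosing $\gamma\in\Gamma$ with $\max_{t}J(\gamma(t))\leq c+\varepsilon$, the path $\tilde\gamma:=\eta(T,\gamma(\cdot))$ still belongs to $\Gamma$, because its endpoints are fixed, and satisfies $\max_{t}J(\tilde\gamma(t))\leq c-\varepsilon<c$, contradicting the definition of $c$. Therefore $c$ is a critical value and there is $u_{0}\in E$ with $J'(u_{0})=0$ and $J(u_{0})=c$.

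The step I expect to be the main obstacle is the construction of the pseudo-gradient flow and the verification of the deformation property in the Banach (possibly non-reflexive, non-Hilbert) setting: one cannot differentiate $J$ along the flow via an inner product, so the energy identity $\frac{d}{ds}J(\eta(s,v))=-\chi(\eta(s,v))\langle J'(\eta(s,v)),V(\eta(s,v))\rangle\leq0$ must be combined with the pseudo-gradient bounds and made uniform over $\{J\leq c+\varepsilon\}$; the Palais--Smale hypothesis is used precisely to secure this uniformity. Everything else is routine.
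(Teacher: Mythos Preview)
Your sketch is a correct outline of the classical Ambrosetti--Rabinowitz argument: the lower bound $c\geq J(u)+\sigma$ via the intersection of every path with $\partial B$, the quantitative nondegeneracy obtained from (PS), the pseudo-gradient deformation, and the contradiction with the definition of $c$ are all standard and sound.

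However, there is nothing to compare against: the paper does \emph{not} prove Theorem~\ref{33}. It is stated as a tool, with the reference \cite{AMBRAB1973} serving in lieu of a proof, and is then applied in the proof of Theorem~\ref{37}. So your proposal is not so much an alternative to the paper's proof as a reconstruction of the cited result. If your aim is to match the paper, you should simply cite \cite{AMBRAB1973} and move on; if your aim is to supply a self-contained argument, what you wrote is the right strategy.
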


\begin{lemma}\label{35}
Assume that $\lbrace w_n\rbrace\subset W_{0}^{1,2}(\Omega)$ satisfies
\[\label{34}
J_{\epsilon,\lambda_n}(w_n)\leq C,\quad J_{\epsilon,\lambda_n}^{\prime}\rightarrow 0\text{ in }W_{0}^{-1,2}(\Omega),
\]
for $\lambda_n \rightarrow \lambda > 0.$ The sequence $(w_n)$ then admits a convergent subsequence in $W_{0}^{1,2}(\Omega).$
\end{lemma}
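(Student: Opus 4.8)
The plan is to verify the Palais--Smale condition in the three standard steps: boundedness of the sequence in $W_0^{1,2}(\Omega)$, extraction of a weakly convergent subsequence together with strong convergence in a subcritical Lebesgue space, and an upgrade to strong convergence in $W_0^{1,2}(\Omega)$ via the equation. Two standing facts are used throughout. Since $a\in C^\infty(\overline\Omega)$ and $\overline\Omega$ is compact, there are constants $0<c_0\le c_1$ with $c_0\le e^{-a}\le c_1$ on $\overline\Omega$, so $\Vert\cdot\Vert_{W_0^{1,2}(\Omega)}$ is equivalent to $u\mapsto\bigl(\int_\Omega e^{-a}|\nabla_g u|^2\,\mathrm{d} v_g\bigr)^{1/2}$; and on the bounded domain $\Omega\subset\mathcal M$ the embedding $W_0^{1,2}(\Omega)\hookrightarrow L^q(\Omega)$ is continuous for $q\le 2^*=2N/(N-2)$ and compact for $q<2^*$, with the analogous statement for every $q<\infty$ when $N\le 2$.

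First I would prove that $(w_n)$ is bounded. Because the nonlinearity is truncated at the subcritical power $p$, an inspection of the explicit formula for $g_\epsilon$ in each of the three cases yields a number $\theta\in(2,p+1)$ and a constant $C_\epsilon>0$ such that
\[
s\,g_\epsilon(s)-\theta\,G_\epsilon(s)\ge -C_\epsilon\qquad\text{for all } s\in\mathbb R,
\]
since the leading $s^{p+1}$ contributions to $s\,g_\epsilon(s)$ and to $\theta\,G_\epsilon(s)$ satisfy the inequality strictly for $|s|$ large (using $\theta<p+1$) while only a compact range of $s$ remains. Computing
\[
J_{\epsilon,\lambda_n}(w_n)-\tfrac1\theta\big\langle J_{\epsilon,\lambda_n}'(w_n),w_n\big\rangle=\Big(\tfrac12-\tfrac1\theta\Big)\int_\Omega e^{-a}|\nabla_g w_n|^2\,\mathrm{d} v_g+\lambda_n\int_\Omega e^{-a}\Big(\tfrac1\theta w_n g_\epsilon(w_n)-G_\epsilon(w_n)\Big)\,\mathrm{d} v_g,
\]
and using the bound above on the last integrand together with $J_{\epsilon,\lambda_n}(w_n)\le C$, $\Vert J_{\epsilon,\lambda_n}'(w_n)\Vert_{W_0^{-1,2}}\to 0$ and boundedness of $(\lambda_n)$, we obtain
\[
\Big(\tfrac12-\tfrac1\theta\Big)c_0\,\Vert w_n\Vert_{W_0^{1,2}}^2\le C+\Vert J_{\epsilon,\lambda_n}'(w_n)\Vert_{W_0^{-1,2}}\Vert w_n\Vert_{W_0^{1,2}}+\lambda_n c_1 C_\epsilon\,\Vert\Omega\Vert,
\]
which forces $\sup_n\Vert w_n\Vert_{W_0^{1,2}}<\infty$.

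Passing to a subsequence, $w_n\rightharpoonup w$ in $W_0^{1,2}(\Omega)$; since $p+1<2^*$ (for $N\ge 3$ by the admissible range of $p$, and trivially for $N\le 2$), Rellich--Kondrachov gives $w_n\to w$ strongly in $L^{p+1}(\Omega)$, and along a further subsequence $w_n\to w$ a.e.\ in $\Omega$. From the growth estimate $|g_\epsilon(s)|\le C(1+|s|^p)$ it follows that $(g_\epsilon(w_n))$ is bounded in $L^{(p+1)/p}(\Omega)$. Now I would test $J_{\epsilon,\lambda_n}'(w_n)$ against $w_n-w$, writing
\[
\int_\Omega e^{-a}\nabla_g w_n\cdot\nabla_g(w_n-w)\,\mathrm{d} v_g=\lambda_n\int_\Omega e^{-a}g_\epsilon(w_n)(w_n-w)\,\mathrm{d} v_g+\big\langle J_{\epsilon,\lambda_n}'(w_n),w_n-w\big\rangle.
\]
The last term is $o(1)$ because $J_{\epsilon,\lambda_n}'(w_n)\to0$ and $(w_n-w)$ is bounded; the first term on the right is $o(1)$ by H\"older's inequality, since $(g_\epsilon(w_n))$ is bounded in $L^{(p+1)/p}(\Omega)$, $w_n-w\to0$ in $L^{p+1}(\Omega)$, $(\lambda_n)$ is bounded and $e^{-a}\le c_1$. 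Moreover $\int_\Omega e^{-a}\nabla_g w\cdot\nabla_g(w_n-w)\,\mathrm{d} v_g\to0$ because $v\mapsto\int_\Omega e^{-a}\nabla_g w\cdot\nabla_g v\,\mathrm{d} v_g$ is a bounded linear functional on $W_0^{1,2}(\Omega)$ and $w_n-w\rightharpoonup0$. Subtracting, $\int_\Omega e^{-a}|\nabla_g(w_n-w)|^2\,\mathrm{d} v_g\to0$, and by $e^{-a}\ge c_0$ this gives $w_n\to w$ in $W_0^{1,2}(\Omega)$, which is the assertion.

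I expect the only genuinely delicate point to be the first step, namely confirming the Ambrosetti--Rabinowitz-type inequality for each of the three explicit truncated nonlinearities $g_\epsilon$; this is where the subcriticality condition $1<p<(N+2)/(N-2)$ and the precise shape of the truncation enter. Once $(w_n)$ is bounded and the embedding $W_0^{1,2}(\Omega)\hookrightarrow L^{p+1}(\Omega)$ is known to be compact, the remaining steps are routine, the presence of the variable parameters $\lambda_n\to\lambda>0$ being harmless since $(\lambda_n)$ is bounded.
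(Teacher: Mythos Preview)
Your proof is correct and follows essentially the same route as the paper: both establish boundedness via the Ambrosetti--Rabinowitz inequality $\theta G_\epsilon(u)\le u\,g_\epsilon(u)+C_\epsilon$ combined with the identity $J_{\epsilon,\lambda_n}(w_n)-\tfrac1\theta\langle J_{\epsilon,\lambda_n}'(w_n),w_n\rangle$, then use the compact embedding $W_0^{1,2}(\Omega)\hookrightarrow L^{p+1}(\Omega)$. The only tactical difference is in the last step: the paper first identifies $w$ as a critical point and then compares $\int e^{-a}|\nabla_g w_n|^2$ with $\int e^{-a}|\nabla_g w|^2$, whereas you test $J_{\epsilon,\lambda_n}'(w_n)$ directly against $w_n-w$; both are standard and equivalent ways to upgrade to strong convergence.
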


\begin{proof}
By \eqref{34} we have as $n\rightarrow +\infty$
\[
\int_{\Omega}e^{-a}|\nabla_gw_n|^2\, \mathrm{d} v_g-\lambda_n\int_{\Omega}e^{-a}g_\epsilon(w_n)w_n\, \mathrm{d} v_g=o(\Vert w_n\Vert_{W_{0}^{1,2}}).
\]
We have the inequality
$$\theta G_\epsilon(u)\leq ug_\epsilon(u) \quad \text{ for }u\geq M_\epsilon$$
for some $M_\epsilon > 0$ large and $\theta>2$. We obtain
\[
\begin{alignedat}{2}
C&\geq \frac{1}{2}\int_{\Omega}e^{-a}|\nabla_gw_n|^2\, \mathrm{d} v_g-\lambda_n\int_{\Omega}e^{-a}G_\epsilon(w_n)\, \mathrm{d} v_g\\
&=\left( \frac{1}{2}-\frac{1}{\theta} \right)\int_{\Omega}e^{-a}|\nabla_gw_n|^2\, \mathrm{d} v_g+\lambda_n\int_{\Omega}e^{-a}\left( \frac{1}{\theta}w_ng_\epsilon(w_n)-G_\epsilon(w_n) \right)\, \mathrm{d} v_g+o\left( \Vert w_n \Vert \right)\\
&\geq \left(  \frac{1}{2}-\frac{1}{\theta} \right)\int_{\Omega}e^{-a}|\nabla_gw_n|^2\, \mathrm{d} v_g+o\left( \Vert w_n \Vert_{W_{0}^{1,2}(\Omega)} \right)-C_\epsilon.
\end{alignedat}
\]
It follows that $\sup_{n\in\mathbb{N}}\Vert w_n \Vert_{W_{0}^{1,2}(\Omega)}<+\infty.$ We have the compactness of embedding $W_{0}^{1,2}(\Omega)\hookrightarrow L^{p+1}(\Omega)$ and thus, up to a subsequence, $w_n\rightharpoonup w$ weakly in $W_{0}^{1,2}(\Omega)$ and strongly in $L^{p+1}(\Omega)$ for some $w\in W_{0}^{1,2}(\Omega).$ It follows that
$$\int_{\Omega}e^{-a}|\nabla_gw|^2\, \mathrm{d} v_g=\lambda\int_{\Omega}g_\epsilon(w)w\, \mathrm{d} v_g$$
and we deduce that
\[
\begin{alignedat}{2}
\int_{\Omega}e^{-a}|\nabla_g(w_n-w)|^2\, \mathrm{d} v_g & =\int_{\Omega}e^{-a}|\nabla_g w_n|^2\, \mathrm{d} v_g-\int_{\Omega}e^{-a}|\nabla_g w|^2\, \mathrm{d} v_g+o(1)\\
&=\lambda_n\int_{\Omega}g_\epsilon(w_n)w_n\, \mathrm{d} v_g-\lambda\int_{\Omega}g_\epsilon(w)w\, \mathrm{d} v_g+o(1)\rightarrow 0.
\end{alignedat}
\]
as $n\rightarrow +\infty,$ and the lemma is proved.
\end{proof}

\begin{proof}[Proof of Theorem \ref{37}]
We first show that $J_{\epsilon,\lambda}$ has a mountain pass geometry in $W_{0}^{1,2}(\Omega)$. Since $\underline{u}_\lambda$ is a local minimum for $J_{\epsilon,\lambda}$ for $\lambda\nearrow\lambda^*$, condition $(i)$ of Theorem \ref{33} is satisfied. Consider $r>0$ such that $B_{2r}\subset\Omega$ and a cutoff function $\chi$ so that $\chi=1$ on $B_r$ and $\chi=0$ outside $B_{2r}$. Let $w_\epsilon=(1-\epsilon)\chi\in W_{0}^{1,2}(\Omega).$ In MEMS case, we have
\[
J_{\epsilon,\lambda}(w_\epsilon)\leq \frac{(1-\epsilon)^2}{2}\int_{\Omega}e^{-a}|\nabla\chi|^2\, \mathrm{d} v_g-\frac{\lambda}{\epsilon^2}\int_{B_r}e^{-a}\, \mathrm{d} v_g\rightarrow -\infty
\]
as $\epsilon\rightarrow 0$ and uniformly for $\lambda$ bounded away from $0$. With a similar argument we can prove the same result for Gelfand and Power cases. Thus we have
$$J_{\epsilon,\lambda}(\underline{u}_\lambda)\rightarrow J_{\epsilon,\lambda^*}(u_{\lambda^*}) \text{ as }\lambda\rightarrow\lambda^*$$
we get for $\epsilon>0$ sufficiently small that
$$J_{\epsilon,\lambda}(w_\epsilon)< J_{\epsilon,\lambda}(\underline{u}_\lambda)$$
holds for $\lambda$ close to $\lambda^*.$ It follows by Lemma \ref{35} that the functional $J_{\epsilon,\lambda}$ satisfies the Palais-Smale condition on $W_{0}^{1,2}(\Omega).$ We fix $\epsilon>0$ small enough and for $\lambda$ close to $\lambda^*$ we define
$$c_{\epsilon,\lambda}=\inf_{\gamma\in\Gamma}\max_{u\in\gamma}J_{\epsilon,\lambda}(u).$$
We can use the mountain pass theorem to get a solution $U_{\epsilon,\lambda}$ of \eqref{32} for $\lambda$ close to $\lambda^*.$ A similar proof as in Lemma \ref{31} shows that the convexity of $g_\epsilon$ ensures that problem \eqref{32} has a unique solution at $\lambda=\lambda^*,$ which is $u^*$. By elliptic regularity theory we get that $U_{\epsilon,\lambda}\rightarrow u^*$ uniformly in $C(\overline{\Omega})$. Thus $U_{\epsilon,\lambda}\leq t_0-\epsilon$ for $\lambda$ close to $\lambda^*.$ Therefore, $U_{\epsilon,\lambda}$ is a second solution for \eqref{01} bifurcating from $u^*,$ that we denote by $U_\lambda$. Since $U_\lambda$ is a mountain pass solution, $U_\lambda$ is not a minimal solution. Thus $U_\lambda$ is unstable solution of \eqref{01}.
\end{proof}


\begin{thebibliography}{100}

\bibitem{AMBRAB1973}
\textrm{A.~Ambrosetti, P.~Rabinowitz}:
\textit{Dual variational methods in critical point theory and applications.}
J. Functional Analysis \textbf{14} (1973), 349--381.

\bibitem{ANTMUGPUC2007}
\textrm{P.~Antonini, D.~Mugnai, P.~Pucci}:
\textit{Quasilinear elliptic inequalities on complete riemannian manifold.}
J. Math. Pures Appl. \textbf{87} (2007), 582--600.

\bibitem{BERFERGRI2014}
\textrm{E.~Berchio, A.~Ferrero, G.~Grillo}
\textit{Stability and qualitative properties of radial solutions of the Lane-Emden-Fowler equation on Riemannian models}.
J. Math. Pure. Appl. \textbf{102}, 1--35, 2014.

\bibitem{BERKISNOVRYZ2010}
\textrm{H.~Berestycki, A.~Kiselev, A.~Novikov, L.~Ryzhik}:
\textit{The explosion problem in a flow.}
J. Anal. Math. \textbf{110} (2010), 31--65.

\bibitem{BERKAGJOUSIV1997}
\textrm{H.~Berestycki, L.~Kagan, G.~Joulin, G.~Sivashinsky}:
\textit{The effect of stirring on the limits of thermal explosion.}
Combustion Theory and Modelling \textbf{1} (1997), 97--112. 

\bibitem{BREVAZ1997}
\textrm{H.~Brezis, J.~L.~Vazquez}:
\textit{Blow-up solutions of some nonlinear elliptic problems.}
Rev. Mat. Univ. Complut. Madrid \textbf{10} (1997), 443--469.

\bibitem{CASSAN2013}
\textrm{D.~Castorina, M.~Sanch\'{o}n}:
\textit{Regularity of stable solutions to semilinear elliptic equations on Riemannian models.}
Adv. Nonlinear Anal. \textbf{4} (2015), 295--309.

\bibitem{COGH2008}
\textrm{C.~Cowan, N.~Ghoussoub}:
\textit{Regularity of the extremal solution in a MEMS model with advection.}
Meth. Appl. Anal. \textbf{15} (2008), 355--360.

\bibitem{CAB2010}
\textrm{X.~Cabr\'e}:
\textit{Regularity of minimizers of semilinear elliptic problems up to dimension 4.}
Comm. Pure Appl. Math. \textbf{63} (2010), 1362--1380.

\bibitem{CABSAN2013}
\textrm{X.~Cabr\'e, M.~Sanch\'{o}n}:
\textit{Geometric-type Sobolev inequalities and application to the regularity of minimizers.}
J. Funct. Anal. \textbf{1} (2013), 303--325.

\bibitem{COW20102}
\textrm{C.~Cowan}:
\textit{Optimal Hardy inequalities for general elliptic operators with improvements.}
Commun. Pure Appl. Anal. \textbf{9} (2010), 109--140.

\bibitem{CRARAB1973}
\textrm{M.~Crandall, P.~Rabinowitz}:
\textit{Bifurcation, perturbation of simple eigenvalues, and linearized stability.}
Arch. Rational Mech. Anal. \textbf{52} (1973), 161--180.

\bibitem{CRARAB1975}
\textrm{M.~Crandall, P.~Rabinowitz}:
\textit{Some continuation and variational methods for positive solutions of nonlinear elliptic eigenvalue problems.}
Arch. Rational Mech. Anal. \textbf{58} (1975), 207--218.

\bibitem{espghoguo2007}
\textrm{P.~Esposito, N.~Ghoussoub, Y.~Guo}:
\textit{Compactness along the branch of semistable and unstable solutions for an elliptic problem with a singular nonlinearity.}
Comm. Pure Appl. Math. \textbf{60} (2007), 1731--1768.

\bibitem{espghoguo2009}
\textrm{P.~Esposito, N.~Ghoussoub, Y.~Guo}:
\textit{Mathematical analysis of partial differential equations modeling electrostatic MEMS.}
Courant Lecture Notes in Mathematics \textbf{20} (2009).

\bibitem{FARMARVAL}
\textrm{A.~Farina, L.~Mari, E.~Valdinoci}:
\textit{Splitting theorems, symmetry results and overdeterminated problems for Riemannian manifolds.}
Comm. Partial Differential Equations \textbf{38}, (2013), 1818--1862.

\bibitem{GILTRU}
\textrm{G.~Gilbarg, N.~Trudinger}:
\textit{Elliptic partial differential equations of second order.}
Second edition, Grundlehren der Mathematischen Wissenschaften, 224, Springer, Berlin-New York, 1983.

\bibitem{GHOGUO2007}
\textrm{N.~Ghoussoub, Y.~Guo}:
\textit{On the partial differential equations of electrostatic MEMS devices: stationary case.}
SIAM J. Math. Anal. \textbf{38} (2007), 1423--1449.

\bibitem{GUOPAN}
\textrm{Y.~Guo, Z.~Pan, M.~Ward}:
\textit{Touchdown and pull-in voltage behavior of a mems device with with varying dielectric properties.}
SIAM J. Appl. Math. \textbf{66} (2005), 309--338.

\bibitem{HEB2000}
\textrm{E.~Hebey}:
\textit{Nonlinear analysis on manifolds: Sobolev spaces and inequalities.}
Courant Lectures Notes in Mathematics \textbf{5}, 2000.

\bibitem{JOSLUN1972}
\textrm{D.~Joseph, T.~Lundgren}:
\textit{Quasilinear Dirichlet problem driven by positive sources.}
Arch. Rational Mech. Anal. \textbf{49} (1972), 241--269.

\bibitem{KEEKEL1974}
\textrm{J.~Keener, H.~Keller}:
\textit{Positive solutions of convex nonlinear eigenvalue problems.}
J. Differential Equations \textbf{16} (1974), 103--125.

\bibitem{KHAMOT2012}
\textrm{A.~Khan, D.~Motreanu}:
\textit{Local minimizers versus X-local minimizers.}
Optim. Lett. \textbf{7} (2013), 1027--1033.

\bibitem{KUR1989}
\textrm{T.~Kura,}:
\textit{The weak Supersolution-Subsolution Method for second order quasilinear elliptic equations.}
Hiroshima Math. J. \textbf{19} (1989), 1--36.

\bibitem{LUYEZH2011}
\textrm{X.~Luo, D.~Ye, F.~Zhou}:
\textit{Regularity of the extremal solution for some elliptic problems with singular nonlinearity and advection.}
J. Differential Equations \textbf{251} (2011), 2082--2099.

\bibitem{MAN}
\textrm{M.~do Carmo, C.~Xia,}
\textit{Complete manifolds with non-negative Ricci curvature and the Caffarelli-Kohn-Nirenberg inequalities.}
Compos. Math. \textbf{140} (2004), 818--826.

\bibitem{MIGPUE1980}
\textrm{F.~Mignot, J-P.~Puel,}
\textit{Sur une classe de problemes non lineaires avec non linearite positive, croissante, convexe.}
Comm. Partial Differential Equations \textbf{5} (1980), 45--72.

\bibitem{NED2000}
\textrm{G.~Neved}:
\textit{Regularity of the extremal solution of semilinear elliptic equations.}
C. R. Acad. Sci. Paris Sèr. I Math. \textbf{330} (2000), 997--1002.

\bibitem{PELBER2002}
\textrm{J.~Pelesko, A.~Bernstein}:
\textit{Modeling MEMS and NEMS.}
Chapman Hall and CRC Press, 2008.

\bibitem{PRE2008}
\textrm{E.~Presnov}:
\textit{Global decomposition of vector field on riemannian manifold along natural coordinates.}
Reports on Mathematical Physics \textbf{62} (2008), 273--282.

\bibitem{WEIYE2010}
\textrm{J.~Wei, D.~Ye}:
\textit{On MEMS equation with fringing field.}
Proc. Amer. Math. Soc. \textbf{138} (2010), 1693--1699.

\end{thebibliography}
\end{document}